\numberwithin{equation}{section}
\theoremstyle{definition}
\newtheorem{thm}{Theorem}[section]
\newtheorem{cor}[thm]{Corollary}
\newtheorem{conj}[thm]{Conjecture}
\newtheorem{lem}[thm]{Lemma}
\newtheorem{rem}[thm]{Remark}
\newtheorem{prop}[thm]{Proposition}
\newtheorem{defn}[thm]{Definition}
\newtheorem{example}[thm]{Example}
\newtheorem*{ack}{Acknowledgements}
\tikzset{anchorbase/.style={baseline={([yshift=-0.5ex]current bounding box.center)}},
tinynodes/.style={font=\tiny,text height=0.75ex,text depth=0.15ex},
smallnodes/.style={font=\scriptsize,text height=0.75ex,text depth=0.15ex},
0label/.style={decorate, densely dashed},
1label/.style={very thick, black},
3label/.style={very thick, green},
overcross/.style={line width=5pt,color=white},
thinovercross/.style={line width=4pt,color=white},
2label/.style={very thick, ourblue},
glabel/.style={ultra thick, gray},
}
\colorlet{green}{black!30!green}
\definecolor{ourblue}{RGB}{109, 156, 179}
\newcommand{\sln}[1][n]{\mathfrak{sl}_{#1}}
\newcommand{\gln}[1][n]{\mathfrak{gl}_{#1}}
\newcommand{\spn}[1][2n]{\mathfrak{sp}_{#1}}
\newcommand{\gspn}[1][2n]{\mathfrak{gsp}_{#1}}
\newcommand{\largewedge}{\mbox{\Large $\wedge$}}
\newcommand{\Tr}{\operatorname{Tr}}
\newcommand{\tr}{\operatorname{tr}}
\newcommand{\Rep}{\mathbf{Rep}}
\newcommand{\FRep}{\mathbf{FundRep}}
\newcommand{\Web}{\mathbf{Web}}
\newcommand{\Lad}{\mathbf{Lad}}
\newcommand{\Hom}{{\rm Hom}}
\newcommand{\End}{{\rm End}}
\renewcommand{\to}{\rightarrow}
\newcommand{\id}{{\rm id}}
\newcommand{\BMW}{\mathrm{BMW}}
\newcommand{\Tot}{\operatorname{Tot}}
\let\hat=\widehat
\let\tilde=\widetilde
\def\C{{\mathbbm C}}
\def\N{{\mathbbm N}}
\def\Z{{\mathbbm Z}}
\def\K{{\mathbbm K}}
\begin{document}
%

\title[]{On webs in quantum type $C$}

\author{David E. V. Rose}
\address{Department of Mathematics, University of North Carolina, 
Phillips Hall CB \#3250, UNC-CH, Chapel Hill, NC 27599-3250, USA}
\email{davidrose@unc.edu}

\author{Logan Tatham}
\address{Department of Mathematics, University of North Carolina, 
Phillips Hall CB \#3250, UNC-CH, Chapel Hill, NC 27599-3250, USA}
\email{ltatham@live.unc.edu}

\begin{abstract}
We study webs in quantum type $C$, focusing on the rank three case. 
We define a linear pivotal category $\Web(\spn[6])$ diagrammatically by generators and relations, 
and conjecture that it is equivalent to the category $\FRep(U_q(\spn[6]))$
of quantum $\spn[6]$ representations generated by the fundamental representations, 
for generic values of the parameter $q$.
We prove a number of results in support of this conjecture, 
most notably that there is a full, essentially surjective functor 
$\Web(\spn[6]) \to \FRep(U_q(\spn[6]))$, 
that all $\Hom$-spaces in $\Web(\spn[6])$ are finite-dimensional, 
and that the endomorphism algebra of the monoidal unit in $\Web(\spn[6])$ is $1$-dimensional. 
The latter corresponds to the statement that all closed webs can be evaluated to 
scalars using local relations; as such, we obtain a new approach to the quantum 
$\spn[6]$ link invariants, akin to the Kauffman bracket description of the Jones polynomial.

\end{abstract}

\maketitle

\section{Introduction}
Given a classical algebraic object, such as a group or ring, 
one of the basic questions one can ask is for a presentation of the object 
via generators and relations.
The advent of quantum invariants in low-dimensional topology suggested the 
investigation of a related problem, one ``categorical dimension'' higher: 
is it possible to find a presentation of a given monoidal category via generators and relations?
Indeed, work of Reshetikhin-Turaev \cite{RT1,RT2} shows that
suitable monoidal categories lead to invariants of links and $3$-manifolds, 
and, amongst other applications,
such presentations elucidate various properties of these invariants and related structures.

One of the first results in this direction is a folklore theorem 
that describes the category $\Rep(U_q(\sln[2]))$
of finite-dimensional representations of the quantum group 
$U_q(\sln[2])$ (see \cite{RTW} for a very early incarnation). 
Specifically, the full subcategory $\FRep(U_q(\sln[2]))$
of $\Rep(U_q(\sln[2]))$ monoidally 
generated by the fundamental representation 
is equivalent to the $\C(q)$-linear pivotal category freely generated by a single object, 
modulo a single relation in the endomorphism algebra of the monoidal unit.
Translated via the diagrammatic formalism for monoidal categories, 
this equivalently states that $\FRep(U_q(\sln[2]))$ is equivalent to the 
Temperley-Lieb category, wherein objects are non-negative integers, 
and morphisms $n \to m$ are $\C(q)$-linear combinations 
of planar $(m,n)$-tangles, 
modulo isotopy and the local relation
\[
\xy
(0,0)*{
\begin{tikzpicture}[scale =.75, smallnodes]
	\draw[1label] (0,0) circle (.5);
\end{tikzpicture}
};
\endxy
= -q-q^{-1}
\]
Note that this in effect gives a diagrammatic description 
of the entire category $\Rep(U_q(\sln[2]))$ of finite-dimensional representations, 
as this category can be recovered from $\FRep(U_q(\sln[2]))$ via 
idempotent completion.
Further, $\Rep(U_q(\sln[2]))$ is braided, 
and explicit diagrammatic formulae for the braiding in $\FRep(U_q(\sln[2]))$
recover the Kauffman bracket formulation \cite{Kauff} 
of the Jones polynomial \cite{Jones}.
This diagrammatic, generators and relations description of $\FRep(U_q(\sln[2]))$
thus serves as both the basis for applications of the Jones polynomial 
to classic problems in knot theory \cite{Kauff,Mur,Thistle1}, 
and for the development of Khovanov homology \cite{Kh1} 
and related constructions in categorical representation theory \cite{BSIII,BSIV}.

Pioneering work of Kuperberg \cite{Kup} poses the problem of 
finding a similar description for the category $\FRep(U_q(\mathfrak{g}))$ 
tensor-generated by the fundamental representations of the quantum group 
$U_q(\mathfrak{g})$, where $\mathfrak{g}$ is a complex simple Lie algebra. 
He answers this question in the case that $\mathfrak{g}$ is rank $2$, 
showing that $\FRep(U_q(\mathfrak{g}))$ admits a description as the
$\C(q)$-linear pivotal category freely 
generated by a finite set of morphisms depicted graphically as trivalent vertices, 
modulo a finite list of local relations. 
It follows that morphisms therein are given by $\C(q)$-linear combinations of webs, 
certain labeled trivalent graphs, modulo planar isotopy and local relations.
Again, diagrammatic formulae for the braiding on these categories give 
an explicit, local description for the $U_q(\mathfrak{g})$ link invariants, 
that again serve as the basis for further study of these invariants 
and their categorified analogues.

The problem of extending Kuperberg's results to higher rank 
resisted various attempts \cite{Kim, Mor}, 
although related descriptions were obtained for the corresponding link polynomials \cite{MOY,MOSpn}.
However, in breakthrough work \cite{CKM},
Cautis-Kamnitzer-Morrison found a web-based description of $\FRep(U_q(\sln))$ 
using a quantized version of skew Howe duality. 
This technique was adapted to give new diagrammatic 
descriptions of various other categories of representations of quantum groups 
\cite{RTub,QS2,Grant,TVW,BDKuj}. 
Nevertheless, it remains an open problem to give such a description of 
$\FRep(U_q(\mathfrak{g}))$ for simple complex $\mathfrak{g}$ of rank $\geq 3$ 
outside type $A$.
Complicating matters, the work of Sartori-Tubbenhauer \cite{TubSar} interestingly 
suggests that skew Howe duality 
cannot be used in a straightforward way to solve this problem in quantum types $BCD$, 
thus new ideas are required.

In this paper, 
we take the first steps towards solving this problem for the 
quantum group $U_q(\spn)$, focusing on the rank $3$ case.
Specifically, in Definition \ref{def:Web} below, 
we define the category $\Web(\spn[6])$ via generators and relations, 
and conjecture that it is equivalent to $\FRep(U_q(\spn[6]))$.
Although our conjecture remains open at the moment, 
we provide ample evidence for its validity: 
amongst other results, we prove that there is a full, essentially surjective 
functor $\Psi: \Web(\spn[6]) \to \FRep(U_q(\spn[6]))$ of ribbon categories, 
that all $\Hom$-spaces in $\Web(\spn[6])$ are finite-dimensional,
and that the endomorphism algebra of the monoidal unit in 
$\Web(\spn[6])$ is isomorphic to $\C(q)$. 
The latter result equivalently states that all closed webs in $\Web(\spn[6])$ 
can be evaluated to scalars, and our proof provides an explicit algorithm; 
this result pairs with diagrammatic formulae for the braiding to give an explicit, 
local, diagrammatic description of the (colored) $U_q(\spn[6])$ link invariant, 
\`{a} la the Kauffman bracket description of the Jones polynomial.
As such, we expect the category $\Web(\spn[6])$ to form the basis for an 
explicit construction of the $\spn$ link homologies, 
in the spirit of \cite{Kh3,BN2,MSV,QR1}.
In follow-up work \cite{BELR}, 
we plan to investigate faithfulness of the functor $\Psi$ using the techniques from \cite{EliasLL}, 
as well as higher rank and implications for link homology.

In Section \ref{sec:Background},
we recall the relevant background on quantum groups and web categories, 
and collect the main results of the present work. 
Section \ref{sec:FullAndBMW} is devoted to the definition and study of the functor 
$\Psi: \Web(\spn[6]) \to \FRep(U_q(\spn[6]))$. 
In Section \ref{sec:Closed}, we prove 
the finite-dimensionality of $\Hom$-spaces, and results about closed webs in the plane 
and the annulus; algebraically, these latter results correspond to the aforementioned result 
about the endomorphism algebra of the monoidal unit, 
and to a computation of the ``trace decategorification'' of $\Web(\spn[6])$.
To do so, we introduce a category of ladder-like webs as a technical tool. 
We believe this category will play an important role in the eventual resolution of our conjecture.
Finally, in Section \ref{sec:Links} we briefly discuss our approach to 
the quantum $\spn[6]$ link invariant, 
which gives a fresh perspective on the $n=3$ case of the invariants in \cite{MOSpn}

\begin{ack}
Some of the results in this paper also appear, in different form, 
in the PhD thesis \cite{LoganThesis} of the second named author, 
supervised by the first named author at UNC Chapel Hill. 
We thank UNC for excellent working conditions over the past several years.
Special thanks to Tomotada Ohtsuki, 
for extremely helpful correspondence regarding the choice of normalization 
for the generators of $\Web(\spn[6])$; 
his notes \cite{Ohtsuki} suggested the normalization used here. 
Also to Daniel Tubbenhauer, for his careful reading 
of a preliminary version of this manuscript, 
and for his very useful remarks and suggestions.
Both authors also thank Ben Elias and Jiuzu Hong for helpful discussion, 
and for their interest in this work.
This work was supported in part by Simons Collaboration Grant 523992: 
\emph{Research on knot invariants, representation theory, and categorification}.
\end{ack}

\section{Background and statement of results}\label{sec:Background}

In this section, we begin by reviewing quantum groups and their representation theory, 
along the way establishing notation and conventions. 
We then recall know results about webs in type $C_2$, 
and finally state the main results of the present work.

\subsection{Quantum groups and their representation theory} 
We begin with some background on quantum groups and their representations; 
almost all of this material is standard, and can be found e.g. in \cite{CP}, 
except when noted otherwise.

Let $\mathfrak{g}$ be a simple complex Lie algebra of rank $n$ with corresponding 
Cartan matrix $(a_{ij})$, and let $q$ be a (generic) indeterminant. 
Recall that the \emph{quantum group} $U_q(\mathfrak{g})$ associated to $\mathfrak{g}$ is the unital 
$\C(q)$-algebra generated by $X_i^+,X_i^-,K_i,K_i^{-1}$ ($1\leq i\leq n$) modulo the relations:
\begin{itemize}
\item $K_iK_i^{-1}=1$, $K_i^{-1}K_i=1$
\item $K_iK_j=K_jK_i$
\item $K_iX_j^{\pm}K_i^{-1}=q_i^{\pm a_{ij}}X_i^{\pm}$
\item $X_i^+X_j^- - X_j^-X_i^+ =\delta_{ij}\frac{K_i-K_i^{-1}}{q_i-q_i^{-1}}$
\item For $i\neq j$, 
$\sum_{m=0}^{1-a_{ij}}(-1)^m
{1-a_{ij} \brack m}_{q_i} (X_i^\pm)^{1-a_{ij}-m}X_j^\pm(X_i^\pm)^m = 0$.
\end{itemize}
Here, $q_i := q^{d_i}$ where $\{d_i\}_{i=1}^n$ are the relatively prime 
positive integers such that $(d_i a_{ij})$ is symmetric, 
and ${n \brack k}_{q_i} = \frac{[n]_{q_i}!}{[n-k]_{q_i}! [k]_{q_i}!}$ is the quantum binomial coefficient, 
defined using the quantum integers $[n]_{q_i} = \frac{q_i^n - q_i^{-n}}{q_i-q_i^{-1}}$.

Recall that $U_q(\mathfrak{g})$ is a Hopf algebra with comultiplication, 
counit, and antipode defined by
\begin{itemize}
\item $\Delta(K_i)=K_i\otimes K_i$, $\Delta(X_i^+)=X_i^+\otimes K_i + 1\otimes X_i^+$, $\Delta(X_i^-)=X_i^-\otimes1+K_i^{-1}\otimes X_i^-$
\item $\epsilon(K_i)=1$, $\epsilon(X_i^+)=\epsilon(X_i^-)=0$
\item $S(K_i)=K_i^{-1}$, $S(X_i^+)=-X_i^+K_i^{-1}$, $S(X_i^-)=-K_iX_i^-$
\end{itemize}
The quantum group can be understood as a Hopf algebra deformation of the enveloping algebra $U(\mathfrak{g})$. 
Further, there exists a universal $R$-matrix $R \in \widetilde{U_q(\mathfrak{g}) \otimes U_q(\mathfrak{g})}$ and 
ribbon element $\nu \in \tilde{U_q(\mathfrak{g})}$ so that the triple $(U_q(\mathfrak{g}),R,\nu)$ is a (topological)
ribbon Hopf algebra.

The representation theory of $U_q(\mathfrak{g})$ closely parallels that of $\mathfrak{g}$. 
Specifically, let $\Rep(U_q(\mathfrak{g}))$ be the category of (type I) finite-dimensional representations 
of $U_q(\mathfrak{g})$, then, as for $\mathfrak{g}$, $\Rep(U_q(\mathfrak{g}))$ is semi-simple, 
and there is a unique simple object $\Gamma_\lambda$ corresponding to each dominant weight $\lambda$ for $\mathfrak{g}$.
The decomposition of tensor products of irreducible representations into direct sums of irreducibles in 
$\Rep(U_q(\mathfrak{g}))$ (i.e. the fusion rules) are identical to those for $\mathfrak{g}$.
Further, the ribbon Hopf structure on $U_q(\mathfrak{g})$ endows $\Rep(U_q(\mathfrak{g}))$ 
with the structure of a ribbon (also called tortile) category. 
In particular, $\Rep(U_q(\mathfrak{g}))$ is a braided pivotal category.

To expand on the latter, the ribbon Hopf algebra structure produces a group-like element 
$g := \nu^{-1} u \in \tilde{U_q(\mathfrak{g})}$ (here $u$ is an element satisfying $\nu^2 = u S(u)$) 
and for $\Gamma \in \Rep(U_q(\mathfrak{g}))$ the pivotal isomorphism $\Gamma \cong \Gamma^{**}$ 
is given by $v \mapsto (f \mapsto f(gv))$ for $v \in \Gamma$ and $f \in \Gamma^*$.
This element $g$ is also relevant as it determines the \emph{quantum dimension} 
of $\Gamma \in \Rep(U_q(\mathfrak{g}))$, defined explicitly by
\[
\dim_q(\Gamma) = \Tr(g|_{\Gamma})
\]

\begin{rem}\label{rem:Ribbon}
Work of Snyder-Tingley classifies the choices of ribbon element $\nu$ for $U_q(\mathfrak{g})$; 
see \cite{ST} for complete details. 
To summarize, they show that there exists a ``half-ribbon'' element $X \in \tilde{U_q(\mathfrak{g})}$ so that 
all ribbon elements are given by $\nu = s(\phi) X^{-2}$. 
Here, $\phi$ is a character of the weight lattice modulo the root lattice with $|\phi| \leq 2$, 
and $s(\phi)$ is the corresponding element in $\tilde{U_q(\mathfrak{g})}$.

There is a standard choice of ribbon element $\nu = C$ for $U_q(\mathfrak{g})$, 
where $C \in \tilde{U_q(\mathfrak{g})}$ is the so-called quantum Casimir element, 
which acts on the irreducible representation $\Gamma_\lambda$ by $q^{-(\lambda,\lambda+2\rho)}$.
Here, $\rho$ is the half sum of the positive roots (or equivalently the sum of the fundamental weights), 
and $(\cdot , \cdot)$ is the standard symmetric bilinear form.
This ribbon element gives a pivotal structure on $\Rep(U_q(\mathfrak{g}))$ for which the 
Frobenius-Schur indicators agree with those for $\mathfrak{g}$. 

However, for the choice of ribbon element $\nu = X^{-2}$ (and corresponding pivotal structure), 
Snyder-Tingley show that all non-zero Frobenius-Schur indicators equal one. 
For $\mathfrak{g}$ of type $C$, this implies that this choice of $\nu$ differs from the standard choice. 
In this paper, \textbf{we work with this latter, non-standard, ribbon element}, 
hence the corresponding pivotal structure.
\end{rem}

We now further detail the above in the case $\mathfrak{g}=\spn[6]$ of most interest in this work. 
We have 
\[
(a_{ij})=\begin{pmatrix}
2 & -1 & 0 \\
-1 & 2 & -2 \\
0 & -1 & 2
\end{pmatrix}
\]
and $d_1=d_2=1$, $d_3=2$. 
Fixing elements $\{\epsilon_i\}_{i=1}^3$ in the dual of the Cartan that satisfy $(\epsilon_i,\epsilon_j) = \delta_{i,j}$, 
the simple roots are given by 
$\alpha_1 = \epsilon_1 - \epsilon_2, \alpha_2 = \epsilon_2 - \epsilon_3 , \alpha_3 = 2\epsilon_3$,
and the fundamental weights are
$\omega_1 = \epsilon_1, \omega_2 = \epsilon_1 + \epsilon_2, \omega_3 = \epsilon_1 + \epsilon_2 + \epsilon_3$. 
It follows that $\rho = 3\epsilon_1 + 2\epsilon_2 + \epsilon_3$.

All (type I) finite-dimensional irreducible representations of $U_q(\spn[6])$ take the form 
$\Gamma_{a,b,c} := \Gamma_{a\omega_1 + b\omega_2 + c\omega_3}$ for $a,b,c \in \N$, 
and we abbreviate
\[
V := \Gamma_{\omega_1} \;\; , \;\; W := \Gamma_{\omega_2} \;\; U := \Gamma_{\omega_3} 
\]
We record here that
\begin{equation}\label{eq:TensorDecomp}
\begin{aligned}
V \otimes V &\cong \Gamma_{2,0,0} \oplus W \oplus \C(q) \\
V \otimes W &\cong \Gamma_{1,1,0} \oplus U \oplus V \\
V\otimes U &\cong \Gamma_{1,0,1} \oplus W
\end{aligned}
\end{equation}
which in particular imply that every irreducible representation appears as a summand 
of $V^{\otimes n}$ for some $n$, and the parity of this $n$ is uniquely determined by the irreducible representation.

By Remark \ref{rem:Ribbon}, 
the non-standard ribbon element $X^{-2}$ acts on the irreducible representation 
$\Gamma_{\lambda}$ by $(-1)^n q^{-(\lambda,\lambda+2\rho)}$ where $\Gamma_\lambda \subset V^{\otimes n}$. 
The corresponding grouplike element $g$ acts on $\Gamma_{\lambda}$ by 
$(-1)^n K^{2 \rho} := (-1)^n K_1^6K_2^{10}K_3^6$, with $n$ as before 
(here, we use that $2\rho=6\alpha_1+10\alpha_2+6\alpha_3$).
Since $K_i$ acts on the $\mu$-weight space of $\Gamma_{\lambda}$ as $q^{(\alpha_i,\mu)}$,
this implies that
\begin{equation}\label{eq:qDim}
\begin{aligned}
\dim_q(V) &= -(q^6+q^4+q^2+q^{-2}+q^{-4}+q^{-6}) = -\frac{[3][8]}{[4]} \\
\dim_q(W) &= q^{10}+q^8+q^6+q^4+2q^2+2+2q^{-2}+q^{-4}+q^{-6}+q^{-8}+q^{-10} = \frac{[7][8]}{[4]} \\
\dim_q(W) &= -(q^{12}+q^8+q^6+2q^4+q^2+2+q^{-2}+2q^{-4}+q^{-6}+q^{-8}+q^{-12}) = -\frac{[6][7][8]}{[2][3][4]}
\end{aligned}
\end{equation}
Note that these formulae can also be obtained via the quantum Weyl dimension formula, 
see e.g. \cite{CP}.

\subsection{Web categories}

Recall from above that Kuperberg \cite{Kup} extended the Temperley-Lieb description of 
$\Rep(U_q(\sln[2]))$ to $U_q(\mathfrak{g})$ for $\mathfrak{g}$ of rank $2$. 
We now summarize his construction in the case $\mathfrak{g}= \spn[4]$, 
as it is the most relevant to the present work. 
To be precise, we actually work with a category equivalent to Kuperberg's, 
in which his trivalent vertex has been rescaled by $\sqrt{-1}$.

The type $C_2$ spider, denoted here by $\Web(\spn[4])$, 
is obtained from the $\C(q)$-linear ribbon category
pivotally generated by self-dual objects $\{1,2\}$ 
and the morphism
\[
\xy
(0,0)*{
\begin{tikzpicture}[scale =.5, smallnodes]
	\draw[1label] (0,0) node[below,color=black]{$1$} to [out=90,in=210] (.5,.75);
	\draw[1label] (1,0) node[below,color=black]{$1$} to [out=90,in=330] (.5,.75);
	\draw[2label] (.5,.75) to (.5,1.5) node[above,color=black]{$2$};
\end{tikzpicture}
};
\endxy 
\in \Hom_{\Web(\spn[4])}(1 \otimes 1, 2)
\]
by taking the quotient by the tensor-ideal generated by the following relations:
\[
\begin{gathered}
\xy
(0,0)*{
\begin{tikzpicture}[scale =.75, smallnodes]
	\draw[1label] (0,0) circle (.5);
\end{tikzpicture}
};
\endxy
= -\frac{[2][6]}{[3]} 
\;\; , \;\;
\xy
(0,0)*{
\begin{tikzpicture}[scale =.75, smallnodes]
	\draw[2label] (0,0) circle (.5);
\end{tikzpicture}
};
\endxy
= \frac{[5][6]}{[2][3]} 
\;\; , \;\;
\xy
(0,0)*{
\begin{tikzpicture}[scale=.2]
	\draw [1label] (0,-2.75) to [out=30,in=0] (0,.75);
	\draw [1label] (0,-2.75) to [out=150,in=180] (0,.75);
	\draw [2label] (0,-4.5) to (0,-2.75);
\end{tikzpicture}
};
\endxy
= 0
\;\; , \;\;
\xy
(0,0)*{
\begin{tikzpicture}[scale=.2]
	\draw [2label] (0,.75) to (0,2.5);
	\draw [1label] (0,-2.75) to [out=30,in=330] (0,.75);
	\draw [1label] (0,-2.75) to [out=150,in=210] (0,.75);
	\draw [2label] (0,-4.5) to (0,-2.75);
\end{tikzpicture}
};
\endxy
= [2]^2 \;
\xy
(0,0)*{
\begin{tikzpicture}[scale=.2]
	\draw [2label] (0,-4.5) to (0,2.5);
\end{tikzpicture}
};
\endxy \\
\xy
(0,0)*{
\begin{tikzpicture}[scale=.3]
	\draw[1label] (-1,0) to (0,1);
	\draw[1label] (1,0) to (0,1);
	\draw[1label] (0,2.5) to (-1,3.5);
	\draw[1label] (0,2.5) to (1,3.5);
	\draw[2label] (0,1) to (0,2.5);
\end{tikzpicture}
};
\endxy
-
\xy
(0,0)*{
\begin{tikzpicture}[scale=.3, rotate=90]
	\draw[1label] (-1,0) to (0,1);
	\draw[1label] (1,0) to (0,1);
	\draw[1label] (0,2.5) to (-1,3.5);
	\draw[1label] (0,2.5) to (1,3.5);
	\draw[2label] (0,1) to (0,2.5);
\end{tikzpicture}
};
\endxy
=
\xy
(0,0)*{
\begin{tikzpicture}[scale=.3]
	\draw[1label] (-1,0) to [out=45,in=-45] (-1,2);
	\draw[1label] (1,0) to [out=135,in=-135] (1,2);
\end{tikzpicture}
};
\endxy
-
\xy
(0,0)*{
\begin{tikzpicture}[scale=.3]
	\draw[1label] (-1,0) to [out=45,in=135] (1,0);
	\draw[1label] (-1,2) to [out=-45,in=-135] (1,2);
\end{tikzpicture}
};
\endxy
\;\; , \;\;
\xy
(0,0)*{
\begin{tikzpicture}[scale=.25]
	\draw[1label] (-1,0) to (1,0);
	\draw[1label] (-1,0) to (0,1.732);
	\draw[1label] (1,0) to (0,1.732);
	\draw[2label] (0,1.732) to (0,3.232);
	\draw[2label] (-2.3,-.75) to (-1,0);
	\draw[2label] (2.3,-.75) to (1,0);
\end{tikzpicture}
};
\endxy
= 0.
\end{gathered}
\]
As in these relations, we'll follow the conventions that we won't label the (co)domain 
in our morphisms, electing instead to color our web edges. 
For the duration, \textbf{black} denotes $1$-labeled edges 
and \textcolor{ourblue}{\textbf{blue}} denotes $2$-labeled edges. 
Further, as can be inferred from the above, 
we read all webs as mapping from bottom to top.

Denote the full subcategory of $\Rep(U_q(\spn[4]))$ tensor-generated by the fundamental representations 
by $\FRep(U_q(\spn[4]))$. Kuperberg's results then give the following.

\begin{thm}\label{thm:sp4}
There is an equivalence of pivotal categories $\Web(\spn[4]) \cong \FRep(U_q(\spn[4]))$.
\end{thm}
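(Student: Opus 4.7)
The plan is to construct a functor $\Psi \colon \Web(\spn[4]) \to \FRep(U_q(\spn[4]))$
and then establish that it is essentially surjective, full, and faithful.
I would define $\Psi$ on objects by $1 \mapsto V = \Gamma_{\omega_1}$ and $2 \mapsto W = \Gamma_{\omega_2}$,
and on the trivalent generator by choosing a nonzero intertwiner $V \otimes V \to W$;
the space of such intertwiners is one-dimensional (by the $\spn[4]$ analogue of (\ref{eq:TensorDecomp})),
and the scalar is pinned down by the prescribed loop and bigon coefficients,
using the $\sqrt{-1}$ rescaling noted in the excerpt.
Well-definedness then reduces to checking that the defining relations of $\Web(\spn[4])$
hold in $\FRep(U_q(\spn[4]))$:
the loop values are the quantum dimensions of $V$ and $W$;
the tadpole relation vanishes since $W$ contains no trivial summand of $V \otimes V$;
the bigon relation reflects that $\End(W)$ is one-dimensional,
with the scalar fixed by the vertex normalization;
and the I=H and triangle relations follow from identifying the isotypic decompositions
of $V^{\otimes 4}$ and $W^{\otimes 3}$ respectively,
then comparing the two sides in each isotypic component.

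Essential surjectivity is automatic, since $V$ and $W$ monoidally generate $\FRep(U_q(\spn[4]))$.
For fullness, I would argue that every intertwiner between tensor products of fundamentals
is a linear combination of compositions of elementary intertwiners arising from the fusion rules,
each of which is realized (up to scalar) by a composite of caps, cups, and trivalent vertices;
the image of $\Psi$ therefore spans each $\Hom$-space.

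The hard step is faithfulness.
Following Kuperberg, the approach is to exhibit a candidate basis of \emph{non-elliptic webs}
for each $\Hom$-space,
i.e.\ webs whose interior faces are none of the ``elliptic'' configurations
(bigons, tetragons, and the hexagonal triangle pattern)
that are eliminable via the defining relations.
One then needs to establish two statements:
first, that every web reduces to a linear combination of non-elliptic webs via the relations,
which is a confluent rewriting argument using a complexity measure strictly decreased by each relation;
and second, that the images of non-elliptic webs with fixed boundary are linearly independent in $\FRep(U_q(\spn[4]))$.
The principal obstacle is the second statement;
the standard route is to count non-elliptic webs with prescribed boundary
via a combinatorial bijection with lattice paths or growth diagrams,
or by a transfer-matrix enumeration,
and match the count with the intertwiner dimension obtained from the quantum Weyl character formula.
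Combined with the reduction result and fullness,
this dimension agreement forces non-elliptic webs to form a basis whose image is a basis on the target side,
yielding faithfulness and completing the equivalence.
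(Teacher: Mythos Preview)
The paper does not actually supply a proof of this theorem: it is stated as a summary of Kuperberg's rank-$2$ results, with the sentence ``Kuperberg's results then give the following'' preceding the statement and a citation to \cite{Kup} afterward. So there is no ``paper's own proof'' to compare against; the relevant benchmark is Kuperberg's original argument.

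Your outline is essentially a faithful sketch of that argument. A few remarks on accuracy. Kuperberg's faithfulness proof does not proceed by directly showing that images of non-elliptic webs are linearly independent in $\FRep$; rather, he shows (i) non-elliptic webs span each $\Hom$-space in $\Web(\spn[4])$, via an Euler-characteristic argument guaranteeing that any web with an interior face has a face of low enough degree to be simplified by one of the relations, and (ii) the number of non-elliptic webs with given boundary equals $\dim \Hom_{\spn[4]}$, via an explicit generating-function comparison. Combined with fullness, this forces the spanning set to be a basis and $\Psi$ to be an isomorphism on each $\Hom$-space. Your phrase ``confluent rewriting argument using a complexity measure'' overstates what is needed for (i); no confluence is required, only termination plus the dimension count. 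Also, the ``hexagonal triangle pattern'' is not standard terminology: in type $C_2$ the elliptic faces are circles, bigons, and the all-$2$-labeled triangle appearing in the last defining relation.
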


Indeed, this is the $\mathfrak{g}=\spn[4]$ case of his more-general result, 
while holds for all rank $2$ Lie algebras 
(i.e. additionally for $\sln[3]$ and $\mathfrak{g}_2$).
Further, he extends this result to an equivalence of ribbon categories by giving 
explicitly formulae for the braidings in the web categories.
As a consequence, Kuperberg obtains an explicit, 
construction of the colored $U_q(\mathfrak{g})$ link 
invariant in the spirit of the Kauffman bracket formulation of the Jones polynomial, 
which is thus amenable to combinatorial study.

\begin{rem}
In the next section, we'll add \textcolor{green}{\textbf{green}} (and later \textcolor{gray}{\textbf{gray}}) edges to our color palette. 
Those reading/printing in grayscale, or with color vision deficiency, can download the tex code, 
comment out the commands 
\begin{verbatim} 
2label/.style={very thick, ourblue}, 
glabel/.style={ultra thick, gray},
\end{verbatim} 
and comment in the commands
\begin{verbatim}
%2label/.style={double},
%glabel/.style={line width=3pt, gray},
\end{verbatim}
which will render all \textcolor{ourblue}{\textbf{blue}} edges as doubled, 
and thicken all \textcolor{gray}{\textbf{gray}} edges, to help distinguish them from the \textcolor{green}{\textbf{green}} edges.
\end{rem}

\subsection{Type $\spn[6]$ webs and statement of results}\label{sec:Theorems}

We now take the first steps towards giving a web-based description 
of $\Rep(U_q(\mathfrak{g}))$ in type $C$ and higher rank. 
We focus on the case $\mathfrak{g}=\spn[6]$, but believe our results should extend to higher rank as well. 
Specifically, we define a category $\Web(\spn[6])$, 
and conjecture that it gives a description of the category of quantum $\spn[6]$ representations
(see Conjecture \ref{conj} below).
At the time of writing, this remains a conjecture; however, we provide ample evidence for its validity in 
Theorems \ref{thmFull}, \ref{thmBMW}, \ref{thmEmpty}, \ref{thmTr}, and \ref{thmRes} below.

\begin{defn}\label{def:Web}
Let $\Web(\spn[6])$ be the strict pivotal $\C(q)$-linear category generated 
by the self-dual objects $\{1,2,3\}$ and with morphisms generated by:
\[
\xy
(0,0)*{
\begin{tikzpicture}[scale =.5, smallnodes]
	\draw[1label] (0,0) node[below]{$1$} to [out=90,in=210] (.5,.75);
	\draw[1label] (1,0) node[below]{$1$} to [out=90,in=330] (.5,.75);
	\draw[2label] (.5,.75) to (.5,1.5) node[above]{$2$};
\end{tikzpicture}
};
\endxy 
\in \Hom_{\Web(\spn[6])}(1 \otimes 1, 2)
\;\; , \;\;
\xy
(0,0)*{
\begin{tikzpicture}[scale =.5, smallnodes]
	\draw[1label] (0,0) node[below]{$1$} to [out=90,in=210] (.5,.75);
	\draw[2label] (1,0) node[below]{$2$} to [out=90,in=330] (.5,.75);
	\draw[3label] (.5,.75) to (.5,1.5) node[above]{$3$};
\end{tikzpicture}
};
\endxy 
\in \Hom_{\Web(\spn[6])}(1 \otimes 2, 3)
\]
modulo the tensor-ideal generated by the following relations:
\begin{equation}\label{eq:spRel}
\begin{gathered}
\xy
(0,0)*{
\begin{tikzpicture}[scale =.75, smallnodes]
	\draw[1label] (0,0) circle (.5);
\end{tikzpicture}
};
\endxy
= -\frac{[3][8]}{[4]} 
\;\; , \;\;
\xy
(0,0)*{
\begin{tikzpicture}[scale=.2]
	\draw [1label] (0,-2.75) to [out=30,in=0] (0,.75);
	\draw [1label] (0,-2.75) to [out=150,in=180] (0,.75);
	\draw [2label] (0,-4.5) to (0,-2.75);
\end{tikzpicture}
};
\endxy
= 0
\;\; , \;\;
\xy
(0,0)*{
\begin{tikzpicture}[scale=.2]
	\draw [2label] (0,.75) to (0,2.5);
	\draw [1label] (0,-2.75) to [out=30,in=330] (0,.75);
	\draw [1label] (0,-2.75) to [out=150,in=210] (0,.75);
	\draw [2label] (0,-4.5) to (0,-2.75);
\end{tikzpicture}
};
\endxy
= [2][3] \;
\xy
(0,0)*{
\begin{tikzpicture}[scale=.2]
	\draw [2label] (0,-4.5) to (0,2.5);
\end{tikzpicture}
};
\endxy 
\;\; , \;\;
\xy
(0,0)*{
\begin{tikzpicture}[scale=.2]
	\draw [3label] (0,.75) to (0,2.5);
	\draw [2label] (0,-2.75) to [out=30,in=330] (0,.75);
	\draw [1label] (0,-2.75) to [out=150,in=210] (0,.75);
	\draw [3label] (0,-4.5) to (0,-2.75);
\end{tikzpicture}
};
\endxy
= [2][3] \;
\xy
(0,0)*{
\begin{tikzpicture}[scale=.2]
	\draw [3label] (0,-4.5) to (0,2.5);
\end{tikzpicture}
};
\endxy 
\;\; , \;\;
\xy 
(0,0)*{
\begin{tikzpicture}[scale=.2, xscale=-1]
	\draw [1label] (-1,-1) to [out=90,in=210] (0,.75);
	\draw [1label] (1,-1) to [out=90,in=330] (0,.75);
	\draw [1label] (3,-1) to [out=90,in=330] (1,2.5);
	\draw [2label] (0,.75) to [out=90,in=210] (1,2.5);
	\draw [3label] (1,2.5) to (1,4.25);
\end{tikzpicture}
};
\endxy
=
\xy 
(0,0)*{
\begin{tikzpicture}[scale=.2]
	\draw [1label] (-1,-1) to [out=90,in=210] (0,.75);
	\draw [1label] (1,-1) to [out=90,in=330] (0,.75);
	\draw [1label] (3,-1) to [out=90,in=330] (1,2.5);
	\draw [2label] (0,.75) to [out=90,in=210] (1,2.5);
	\draw [3label] (1,2.5) to (1,4.25);
\end{tikzpicture}
};
\endxy
\\
\xy
(0,0)*{
\begin{tikzpicture}[scale=.25]
	\draw[2label] (-1,0) to (1,0);
	\draw[1label] (-1,0) to (0,1.732);
	\draw[1label] (1,0) to (0,1.732);
	\draw[2label] (0,1.732) to (0,3.232);
	\draw[3label] (-2.3,-.75) to (-1,0);
	\draw[3label] (2.3,-.75) to (1,0);
\end{tikzpicture}
};
\endxy
= 0
\;\; , \;\;
\xy
(0,0)*{
\begin{tikzpicture}[scale=.55,xscale=-1]
	\draw[2label] (0,-1) to (0,-.5);
	\draw[1label] (0,-.5) to [out=150,in=210] (0,.5);
	\draw[1label] (0,-.5) to (.5,-.25);
	\draw[1label] (.75,-1) to [out=90,in=330] (.5,-.25);
	\draw[2label] (.5,-.25) to  (.5,.25);
	\draw[1label] (.5,.25) to (0,.5);
	\draw[1label] (.5,.25) to [out=30,in=270] (.75,1);
	\draw[2label] (0,.5) to (0,1);
\end{tikzpicture}
};
\endxy
=[3]^2\;
\xy
(0,0)*{
\begin{tikzpicture}[scale=.55,xscale=-1]
	\draw[2label] (0,-1) to (0,1);
	\draw[1label] (.75,-1) to (.75,1);
\end{tikzpicture}
};
\endxy
-\frac{1}{[2]}\;
\xy
(0,0)*{
\begin{tikzpicture}[scale=.55,xscale=-1]
	\draw[2label] (-.25,-1) to [out=90,in=210] (.125,-.4);
	\draw[1label] (.5,-1) to [out=90,in=330] (.125,-.4);
	\draw[1label] (.125,-.4) to (.125,.4);
	\draw[2label] (.125,.4) to [out=150,in=270] (-.25,1);
	\draw[1label] (.125,.4) to [out=30,in=270] (.5,1);
\end{tikzpicture}
};
\endxy
+\frac{[3]^2}{[2]} \;
\xy
(0,0)*{
\begin{tikzpicture}[scale=.55,xscale=-1]
	\draw[2label] (-.25,-1) to [out=90,in=210] (.125,-.4);
	\draw[1label] (.5,-1) to [out=90,in=330] (.125,-.4);
	\draw[3label] (.125,-.4) to (.125,.4);
	\draw[2label] (.125,.4) to [out=150,in=270] (-.25,1);
	\draw[1label] (.125,.4) to [out=30,in=270] (.5,1);
\end{tikzpicture}
};
\endxy 
\\
\xy
(0,0)*{
\begin{tikzpicture}[scale=.3]
	\draw[1label] (-1,0) to (0,1);
	\draw[1label] (1,0) to (0,1);
	\draw[1label] (0,2.5) to (-1,3.5);
	\draw[1label] (0,2.5) to (1,3.5);
	\draw[2label] (0,1) to (0,2.5);
\end{tikzpicture}
};
\endxy
-
\xy
(0,0)*{
\begin{tikzpicture}[scale=.3, rotate=90]
	\draw[1label] (-1,0) to (0,1);
	\draw[1label] (1,0) to (0,1);
	\draw[1label] (0,2.5) to (-1,3.5);
	\draw[1label] (0,2.5) to (1,3.5);
	\draw[2label] (0,1) to (0,2.5);
\end{tikzpicture}
};
\endxy
=[2] \bigg(
\xy
(0,0)*{
\begin{tikzpicture}[scale=.3]
	\draw[1label] (-1,0) to [out=45,in=-45] (-1,2);
	\draw[1label] (1,0) to [out=135,in=-135] (1,2);
\end{tikzpicture}
};
\endxy
-
\xy
(0,0)*{
\begin{tikzpicture}[scale=.3]
	\draw[1label] (-1,0) to [out=45,in=135] (1,0);
	\draw[1label] (-1,2) to [out=-45,in=-135] (1,2);
\end{tikzpicture}
};
\endxy
\bigg)
\;\; , \;\;
\xy
(0,0)*{
\begin{tikzpicture}[scale=.3]
	\draw[1label] (-1,0) to (0,1);
	\draw[2label] (1,0) to (0,1);
	\draw[2label] (0,2.5) to (-1,3.5);
	\draw[1label] (0,2.5) to (1,3.5);
	\draw[1label] (0,1) to (0,2.5);
\end{tikzpicture}
};
\endxy
-
\xy
(0,0)*{
\begin{tikzpicture}[scale=.3,rotate=90,xscale=-1]
	\draw[1label] (-1,0) to (0,1);
	\draw[2label] (1,0) to (0,1);
	\draw[2label] (0,2.5) to (-1,3.5);
	\draw[1label] (0,2.5) to (1,3.5);
	\draw[1label] (0,1) to (0,2.5);
\end{tikzpicture}
};
\endxy
=[3] \Bigg(
\xy
(0,0)*{
\begin{tikzpicture}[scale=.3,rotate=90,xscale=-1]
	\draw[1label] (-1,0) to (0,1);
	\draw[2label] (1,0) to (0,1);
	\draw[2label] (0,2.5) to (-1,3.5);
	\draw[1label] (0,2.5) to (1,3.5);
	\draw[3label] (0,1) to (0,2.5);
\end{tikzpicture}
};
\endxy
-
\xy
(0,0)*{
\begin{tikzpicture}[scale=.3]
	\draw[1label] (-1,0) to (0,1);
	\draw[2label] (1,0) to (0,1);
	\draw[2label] (0,2.5) to (-1,3.5);
	\draw[1label] (0,2.5) to (1,3.5);
	\draw[3label] (0,1) to (0,2.5);
\end{tikzpicture}
};
\endxy
\; \Bigg)
\end{gathered}
\end{equation}
\end{defn}
In these relations, and for the duration, 
we extend our color conventions by denoting $3$-labeled web edges in 
\textcolor{green}{\textbf{green}}.
(Later, we also depict edges with arbitrary labels in thick \textcolor{gray}{\textbf{gray}}.)
Further, we employ the shorthand
\[
\xy
(0,0)*{
\begin{tikzpicture}[scale =.5, smallnodes]
	\draw[2label] (0,0) node[below]{$2$} to [out=90,in=210] (.5,.75);
	\draw[1label] (1,0) node[below]{$1$} to [out=90,in=330] (.5,.75);
	\draw[3label] (.5,.75) to (.5,1.5) node[above]{$3$};
\end{tikzpicture}
};
\endxy 
:=
\frac{1}{[2][3]}
\xy
(0,0)*{
\begin{tikzpicture}[scale=.25]
	\draw[1label] (-1,0) to (1,0);
	\draw[1label] (-1,0) to (0,1.732);
	\draw[2label] (1,0) to (0,1.732);
	\draw[3label] (0,1.732) to (0,3.232);
	\draw[2label] (-2.3,-.75) to (-1,0);
	\draw[1label] (2.3,-.75) to (1,0);
\end{tikzpicture}
};
\endxy
\]

\begin{rem}
The reader will notice that, with the exception of the final relation, 
these relations agree with the $n=3$ case of the relations in \cite{MOSpn}. 
Indeed, as the proof of Theorem \ref{thm:Psi} shows, 
up to rescaling our generating morphisms, 
these relations are the only ones possible between the constituent webs.
Our choice of normalization 
(which differs from that taken in a previous incarnation of this work \cite{LoganThesis})
was greatly influenced by the notes \cite{Ohtsuki}, 
which, in particular, correct a typo in \cite[Lemma 2.7]{MOSpn} that
makes their ``square relation'' agree with the above. 
\end{rem}

\begin{rem}
We record the following useful consequences of the above:
\[
\begin{gathered}
\xy
(0,0)*{
\begin{tikzpicture}[scale =.75, smallnodes]
	\draw[2label] (0,0) circle (.5);
\end{tikzpicture}
};
\endxy
= \frac{[7][8]}{[4]} 
\;\; , \;\;
\xy
(0,0)*{
\begin{tikzpicture}[scale =.75, smallnodes]
	\draw[3label] (0,0) circle (.5);
\end{tikzpicture}
};
\endxy
= -\frac{[6][7][8]}{[2][3][4]}
\\
\xy
(0,0)*{
\begin{tikzpicture}[scale=.2,anchorbase]
	\draw [1label] (0,.75) to (0,2.5);
	\draw [2label] (0,-2.75) to [out=30,in=330] (0,.75);
	\draw [1label] (0,-2.75) to [out=150,in=210] (0,.75);
	\draw [3label] (0,-4.5) to (0,-2.75);
\end{tikzpicture}
};
\endxy
=0
= 
\xy
(0,0)*{
\begin{tikzpicture}[scale=.2,xscale=-1,anchorbase]
	\draw [1label] (0,.75) to (0,2.5);
	\draw [2label] (0,-2.75) to [out=30,in=330] (0,.75);
	\draw [1label] (0,-2.75) to [out=150,in=210] (0,.75);
	\draw [3label] (0,-4.5) to (0,-2.75);
\end{tikzpicture}
};
\endxy
\;\; , \;\;
\xy
(0,0)*{
\begin{tikzpicture}[scale=.2]
	\draw [1label] (0,.75) to (0,2.5);
	\draw [2label] (0,-2.75) to [out=30,in=330] (0,.75);
	\draw [1label] (0,-2.75) to [out=150,in=210] (0,.75);
	\draw [1label] (0,-4.5) to (0,-2.75);
\end{tikzpicture}
};
\endxy
= -[2][7] \;
\xy
(0,0)*{
\begin{tikzpicture}[scale=.2]
	\draw [1label] (0,-4.5) to (0,2.5);
\end{tikzpicture}
};
\endxy
\;\; , \;\;
\xy
(0,0)*{
\begin{tikzpicture}[scale=.2,anchorbase]
	\draw [1label] (0,.75) to (0,2.5);
	\draw [3label] (0,-2.75) to [out=30,in=330] (0,.75);
	\draw [2label] (0,-2.75) to [out=150,in=210] (0,.75);
	\draw [1label] (0,-4.5) to (0,-2.75);
\end{tikzpicture}
};
\endxy
= \frac{[6][7]}{[3]} \;
\xy
(0,0)*{
\begin{tikzpicture}[scale=.2,anchorbase]
	\draw [1label] (0,-4.5) to (0,2.5);
\end{tikzpicture}
};
\endxy  
\;\; , \;\;
\xy
(0,0)*{
\begin{tikzpicture}[scale=.2]
	\draw [2label] (0,.75) to (0,2.5);
	\draw [3label] (0,-2.75) to [out=30,in=330] (0,.75);
	\draw [1label] (0,-2.75) to [out=150,in=210] (0,.75);
	\draw [2label] (0,-4.5) to (0,-2.75);
\end{tikzpicture}
};
\endxy
= -[6] \;
\xy
(0,0)*{
\begin{tikzpicture}[scale=.2]
	\draw [2label] (0,-4.5) to (0,2.5);
\end{tikzpicture}
};
\endxy 
\\
\xy
(0,0)*{
\begin{tikzpicture}[scale=.25]
	\draw[3label] (-1,0) to (1,0);
	\draw[1label] (-1,0) to (0,1.732);
	\draw[1label] (1,0) to (0,1.732);
	\draw[2label] (0,1.732) to (0,3.232);
	\draw[2label] (-2.3,-.75) to (-1,0);
	\draw[2label] (2.3,-.75) to (1,0);
\end{tikzpicture}
};
\endxy
=
\xy
(0,0)*{
\begin{tikzpicture}[scale=.25]
	\draw[1label] (-1,0) to (1,0);
	\draw[1label] (-1,0) to (0,1.732);
	\draw[1label] (1,0) to (0,1.732);
	\draw[2label] (0,1.732) to (0,3.232);
	\draw[2label] (-2.3,-.75) to (-1,0);
	\draw[2label] (2.3,-.75) to (1,0);
\end{tikzpicture}
};
\endxy
\;\; , \;\;
\xy
(0,0)*{
\begin{tikzpicture}[scale=.25]
	\draw[2label] (-1,0) to (1,0);
	\draw[1label] (-1,0) to (0,1.732);
	\draw[1label] (1,0) to (0,1.732);
	\draw[2label] (0,1.732) to (0,3.232);
	\draw[1label] (-2.3,-.75) to (-1,0);
	\draw[1label] (2.3,-.75) to (1,0);
\end{tikzpicture}
};
\endxy
= 
[4]
\xy
(0,0)*{
\begin{tikzpicture}[scale =.5, smallnodes]
	\draw[1label] (0,0)to [out=90,in=210] (.5,.75);
	\draw[1label] (1,0) to [out=90,in=330] (.5,.75);
	\draw[2label] (.5,.75) to (.5,1.5);
\end{tikzpicture}
};
\endxy 
\;\; , \;\;
\xy
(0,0)*{
\begin{tikzpicture}[scale=.25]
	\draw[3label] (-1,0) to (1,0);
	\draw[2label] (-1,0) to (0,1.732);
	\draw[1label] (1,0) to (0,1.732);
	\draw[3label] (0,1.732) to (0,3.232);
	\draw[1label] (-2.3,-.75) to (-1,0);
	\draw[2label] (2.3,-.75) to (1,0);
\end{tikzpicture}
};
\endxy
= [4] \;
\xy
(0,0)*{
\begin{tikzpicture}[scale =.5, smallnodes]
	\draw[1label] (0,0)to [out=90,in=210] (.5,.75);
	\draw[2label] (1,0) to [out=90,in=330] (.5,.75);
	\draw[3label] (.5,.75) to (.5,1.5);
\end{tikzpicture}
};
\endxy
\end{gathered}
\]
\end{rem}

We posit the following extension of Theorem \ref{thm:sp4}.

\begin{conj}\label{conj}
There is an equivalence of ribbon categories $\Web(\spn[6]) \cong \FRep(U_q(\spn[6]))$.
\end{conj}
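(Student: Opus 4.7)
The plan is to deduce Conjecture \ref{conj} from the three main results listed in the introduction together with a faithfulness statement for $\Psi$. By Theorem \ref{thmFull}, the functor $\Psi$ is a full, essentially surjective functor of ribbon categories, so it will be an equivalence of ribbon categories as soon as it is also faithful; I would therefore focus entirely on proving that $\Psi$ is faithful on every Hom-space. Using the pivotal structure, faithfulness on all Hom-spaces reduces to faithfulness on the spaces $\Hom_{\Web(\spn[6])}(\emptyset, \underline{k})$ where $\underline{k} = k_1 \otimes \cdots \otimes k_n$ ranges over tensor words in the generating objects $\{1,2,3\}$: any morphism $X \to Y$ can be bent via the pivotal cups and caps to a morphism $\emptyset \to X^* \otimes Y$, and $\Psi$ commutes with this operation.

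Next, since $\Psi$ is full, for any such $\underline{k}$ we have
\[
\dim_{\C(q)} \Hom_{\Web(\spn[6])}(\emptyset, \underline{k}) \geq \dim_{\C(q)} \Hom_{U_q(\spn[6])}\bigl(\C(q), \Gamma_{\omega_{k_1}} \otimes \cdots \otimes \Gamma_{\omega_{k_n}}\bigr),
\]
and the task is to establish the opposite inequality, for which the right hand side is computed inductively from the fusion rules extending \eqref{eq:TensorDecomp}. To achieve this, I would adapt the ``double leaves'' construction of \cite{EliasLL} to the present setting: build, recursively in $n$, a family $\{L_T\}$ of webs indexed by labelled rooted trees (equivalently, by certain lattice paths in the dominant chamber) where each $L_T$ is obtained by stacking generating trivalent vertices and cup morphisms in the pattern prescribed by $T$. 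One then shows (i) that the $L_T$ span $\Hom_{\Web(\spn[6])}(\emptyset, \underline{k})$, and (ii) that they are linearly independent. For (ii) the non-degenerate pairing afforded by Theorem \ref{thmEmpty} — closing up a pair of webs and evaluating to a scalar — provides a convenient detection mechanism: it suffices to verify that the Gram matrix of $\{L_T\}$ is non-singular, which can be established by mapping through $\Psi$ and appealing to non-degeneracy of the analogous pairing on $\FRep(U_q(\spn[6]))$ guaranteed by semisimplicity.

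The main obstacle is step (i), the spanning step, and this is where a genuinely new idea is needed. Unlike in Kuperberg's rank-$2$ analyses, the relations \eqref{eq:spRel} do not obviously yield a confluent rewriting system reducing an arbitrary web to a linear combination of double leaves. The ladder-like web category introduced in Section \ref{sec:Closed}, which was already used to evaluate closed webs to scalars in Theorem \ref{thmEmpty}, is a promising technical tool here: the aim would be to upgrade its scope so that every web with boundary can be rewritten as a linear combination of ladder diagrams with a prescribed cup/cap pattern at the bottom, after which a double-leaves parametrization of ladders completes the argument. Verifying this upgrade amounts to controlling the internal faces of webs with boundary — in particular those adjacent to the new $3$-labeled edges which have no rank-$2$ analogue — and is precisely the step deferred to the follow-up work \cite{BELR}.
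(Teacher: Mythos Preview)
The statement you are addressing is a \emph{conjecture}, not a theorem: the paper explicitly leaves it open, proving only the supporting results (Theorems \ref{thmFull}, \ref{thmBMW}, \ref{thmEmpty}, \ref{thmTr}) and stating that ``the outstanding portion of Conjecture \ref{conj} is to show that the functor $\Psi$ is faithful,'' with this step deferred to the follow-up work \cite{BELR}. There is therefore no proof in the paper to compare your proposal against.

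That said, your outline accurately reconstructs the paper's own strategic discussion. You correctly reduce the problem to faithfulness of $\Psi$, correctly reduce faithfulness to the dimension bound \eqref{eq:HomDim}, and correctly identify the double-leaves/light-ladders approach of \cite{EliasLL} together with the ladder category of Section \ref{sec:Closed} as the intended machinery. You also correctly isolate the genuine gap: the spanning step (i) is not established by the relations \eqref{eq:spRel} or by anything in this paper, and is exactly what is postponed to \cite{BELR}. Your proposal is thus not a proof but a faithful summary of the program the paper itself sketches; you have not closed the gap, and you say so.

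One minor remark on your step (ii): once (i) is known and the cardinality of $\{L_T\}$ matches the dimension of the target Hom-space, linear independence of the $L_T$ follows immediately from fullness of $\Psi$ (their images must then be a basis), so the Gram-matrix argument via Theorem \ref{thmEmpty} is not strictly needed.
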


While Conjecture \ref{conj} remains open at the moment, 
in the subsequent sections of this paper
we prove a number of results concerning $\Web(\spn[6])$ that serve as strong evidence for its validity. 
Explicitly, we show the following:

\begin{thm}\label{thmFull}
The category $\Web(\spn[6])$ is ribbon, and there is a 
full, essentially surjective, braided, monoidal functor
$\Psi: \Web(\spn[6]) \twoheadrightarrow \FRep(U_q(\spn[6]))$.
\end{thm}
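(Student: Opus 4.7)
The plan is to construct $\Psi$ by first fixing its behavior on generators, then checking well-definedness, establishing the ribbon structure on $\Web(\spn[6])$ via explicit formulas for the braiding, and finally deducing fullness via a Schur--Weyl argument. On objects set $\Psi(1) = V$, $\Psi(2) = W$, $\Psi(3) = U$. The Hom spaces $\Hom_{U_q(\spn[6])}(V \otimes V, W)$ and $\Hom_{U_q(\spn[6])}(V \otimes W, U)$ are each one-dimensional by \eqref{eq:TensorDecomp}, so I would choose generators of each and define $\Psi$ on the two trivalent generators to be those intertwiners, with normalization constants dictated by the need to satisfy the defining relations. Well-definedness requires verifying that each relation in \eqref{eq:spRel} maps to an identity in $\FRep(U_q(\spn[6]))$; each such relation lives in a Hom space that is at most a few dimensions over $\C(q)$, so I would reduce each to scalar identities by either (i) projecting both sides onto the isotypic summands of the relevant tensor power of fundamentals (known via iterated application of \eqref{eq:TensorDecomp}), or (ii) closing off the diagram to produce a closed web whose value can be matched against the quantum dimensions recorded in \eqref{eq:qDim}.

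Essential surjectivity is then immediate, since by definition $\FRep(U_q(\spn[6]))$ is tensor-generated by $V, W, U$, all in the image. To equip $\Web(\spn[6])$ with a braiding, I would define $c_{a,b}$ on pairs of generating objects as explicit $\C(q)$-linear combinations of webs, following the $n=3$ specialization of the formulas in \cite{MOSpn} and informed by \cite{Ohtsuki}. The Reidemeister-type axioms and compatibility of the ribbon twist with the pivotal structure are then mechanical, if lengthy, verifications directly from the relations in \eqref{eq:spRel}. That $\Psi$ preserves the braiding reduces to showing $\Psi(c_{a,b})$ equals the $R$-matrix braiding on $\Psi(a) \otimes \Psi(b)$; since $c_{a,b}$ is expanded in low-dimensional Hom spaces, this is a finite check reducible to matching eigenvalues on isotypic components indexed via \eqref{eq:TensorDecomp}.

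For fullness, the strategy is to reduce the problem to morphisms between tensor powers of $V$. By \eqref{eq:TensorDecomp}, $W$ is a summand of $V^{\otimes 2}$ and $U$ is a summand of $V^{\otimes 3}$, and the corresponding projection/inclusion morphisms lie in the image of $\Psi$, being built from the trivalent-vertex generators together with their pivotal duals and scalars extracted from \eqref{eq:spRel}. Thus it suffices to prove that $\End_{U_q(\spn[6])}(V^{\otimes n})$ is in the image for all $n$. This is a quantum Brauer/Schur--Weyl statement: $\End_{U_q(\spn[6])}(V^{\otimes n})$ is a quotient of the BMW algebra $\BMW_n$, which is generated by braidings and cups/caps on adjacent strands. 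The braidings on $V \otimes V$ lie in the image by the previous paragraph. The cups/caps factor through the trivial summand $\C(q) \subset V \otimes V$, which in $\Web(\spn[6])$ is witnessed by the trivalent vertex into the $2$-label composed with its pivotal dual and a suitable scalar, or more directly by the pivotal evaluation/coevaluation on the $1$-labeled object.

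The main obstacle is the verification of well-definedness, in particular the last three relations in \eqref{eq:spRel} (the square relation and the two ``I $=$ H'' type identities), which live in Hom spaces of dimension larger than one and thus require matching several coefficients rather than a single scalar. A secondary technical point is checking that the chosen braiding formulas satisfy the Reidemeister III and ribbon twist relations purely inside $\Web(\spn[6])$, using only \eqref{eq:spRel}; this is precisely what makes $\Web(\spn[6])$ itself ribbon and not merely a category mapping to one, and ensures that the fullness argument via $\BMW_n$-generators actually produces preimages of the braidings on $V^{\otimes n}$ realized as morphisms in $\Web(\spn[6])$.
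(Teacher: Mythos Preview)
Your proposal is correct and follows essentially the same architecture as the paper: construct $\Psi$ on generators, verify well-definedness, build the braiding from explicit web formulas, and deduce fullness from the surjection $\BMW_k \twoheadrightarrow \End_{U_q(\spn[6])}(V^{\otimes k})$ factoring through $\Web(\spn[6])$. The fullness argument and the ribbon verification are essentially identical to what the paper does.

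The one genuine methodological difference is in the well-definedness step. You propose to verify each relation in \eqref{eq:spRel} by direct computation: fix explicit intertwiners, project onto isotypic components, or close off to match quantum dimensions. The paper explicitly notes this route is ``possible, but tedious'' and instead takes a more conceptual path: it first invokes coherence for pivotal categories with self-duality (using that the non-standard ribbon element makes all Frobenius--Schur indicators $+1$) to get a functor from the \emph{free} pivotal category on the generators, and then \emph{determines} the unknown coefficients in the relations by exploiting the ribbon structure on the target. For instance, the coefficient $\kappa$ in the braiding is pinned down by computing the twist eigenvalue $\nu|_V = -q^{-7}$ and $\nu|_W = q^{-12}$, which forces $\kappa = q$ rather than $-q$; the remaining coefficients then fall out from closure and gluing arguments similar in spirit to yours. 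What this buys the paper is that it never has to write down explicit matrix entries for the generating intertwiners --- the relations are verified by structural arguments about one-dimensional Hom spaces and ribbon-element actions. Your direct approach would work equally well (and is what is done in \cite{LoganThesis}), but requires carrying around explicit formulae for the trivalent maps throughout.
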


Thus, the outstanding portion of Conjecture \ref{conj} is to show that the functor $\Psi$ is faithful, 
or, equivalently, to show 
that for objects $\vec{k},\vec{\ell}$ in $\Web(\spn[6])$ we have
\begin{equation}\label{eq:HomDim}
\dim \big(\Hom_{\Web(\spn[6])}(\vec{k},\vec{\ell}) \big) \leq 
\dim \big(\Hom_{\spn[6]}(\Psi(\vec{k}) , \Psi(\vec{\ell})) \big).
\end{equation}

Denoting by $V$ the standard representation of $U_q(\spn[6])$, 
it is well known that 
$\End_{\spn[6]}(V^{\otimes k})$
is a quotient of the $k$-strand BMW algebra $\mathrm{BMW}_k(r,z)$ \cite{Murakami,BW,Hu}, 
after specializing parameters (see Section \ref{sec:FullAndBMW} below). 
We show the analogue of this result for $\Web(\spn[6])$.

\begin{thm}\label{thmBMW}
There is a homomorphism 
$\BMW_k(-q^{7},q-q^{-1}) \to \End_{\Web(\spn[6])}(1^{\otimes k})$.
\end{thm}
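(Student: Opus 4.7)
The approach is to define morphisms $e_i, g_i \in \End_{\Web(\spn[6])}(1^{\otimes k})$ that serve as images of the BMW generators, then verify the defining relations of $\BMW_k(-q^7, q-q^{-1})$ by direct diagrammatic calculation. I set $e_i$ to be the cap-cup morphism on the $i$th and $(i{+}1)$st strands (using the self-duality of the object $1$) and $g_i$ to be the ribbon braiding on those strands; the latter exists because $\Web(\spn[6])$ is ribbon by Theorem \ref{thmFull}. The braid relations $g_i g_{i+1} g_i = g_{i+1} g_i g_{i+1}$, distant commutation $g_i g_j = g_j g_i$ for $|i-j| \geq 2$, and the analogous commutations involving the $e_i$'s are then immediate from the ribbon axioms.

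The key preliminary step is to write the braiding $g \in \End_{\Web(\spn[6])}(1 \otimes 1)$ explicitly as a $\C(q)$-linear combination
\[
g = a \cdot \id + b \cdot e + c \cdot h,
\]
where $h$ is the merge-and-split endomorphism through a $2$-labeled strand. The decomposition $V \otimes V \cong \C(q) \oplus W \oplus \Gamma_{2,0,0}$ from \eqref{eq:TensorDecomp} shows, via the full functor $\Psi$ of Theorem \ref{thmFull}, that $\{\id, e, h\}$ is linearly independent, and the scalars $a, b, c$ are uniquely determined by the three eigenvalues of $g$ on the irreducible summands. These eigenvalues are computed from the Casimir values $(\lambda, \lambda + 2\rho)$ together with the signs arising from our non-standard choice of ribbon element (Remark \ref{rem:Ribbon}), and they work out to $\{-q^{-7}, q, -q^{-1}\}$, matching the BMW parameters $r = -q^7$, $z = q - q^{-1}$.

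With this explicit formula in hand, the remaining BMW relations can be verified directly. The quadratic $e_i^2 = -\tfrac{[3][8]}{[4]} e_i$ is the unknot evaluation from \eqref{eq:spRel} and is consistent with $(r-r^{-1})/z + 1 = -[7] + 1$; the twist relation $g_i e_i = r^{-1} e_i$ reduces to expanding $g$ and applying the vanishing of the $1$-$1$-$2$ lollipop together with the $2$-bubble value; the skein relation $g_i - g_i^{-1} = (q - q^{-1})(\id - e_i)$ is equivalent to the minimal polynomial $(g - r^{-1})(g - q)(g + q^{-1}) = 0$ for the eigenvalues above; and the tangle relations $e_i e_{i+1} e_i = e_i$ and $e_i g_{i+1}^{\pm 1} e_i = r^{\pm 1} e_i$ follow from local pivotal manipulations together with the preceding identities. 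The main obstacle will be deriving the explicit formula for $g$ and verifying the skein relation, since this requires careful bookkeeping between the $R$-matrix action on $V \otimes V$ and the web-level coefficients; the ``square relation'' (second-to-last in \eqref{eq:spRel}) will be crucial for reducing composites involving multiple braidings and cups/caps back to the three-dimensional span of $\{\id, e, h\}$.
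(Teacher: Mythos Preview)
Your proposal is correct and follows essentially the same approach as the paper: send $g_i$ to the braiding and $e_i$ to the cap-cup on strands $i,i{+}1$, then verify the BMW relations using the ribbon and pivotal structure together with the explicit crossing formula.

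The main difference is one of economy. You propose to re-derive the expansion $g = a\cdot\id + b\cdot e + c\cdot h$ from the eigenvalues of the $R$-matrix on the summands of $V\otimes V$, but the paper already has this formula as \eqref{eq:braiding} (with $a=q$, $b=q^{-3}/[3]$, $c=-1/[3]$), obtained by exactly that method in the proof of Theorem~\ref{thm:Psi}. With \eqref{eq:braiding} in hand, the skein relation (1) is a one-line subtraction rather than a minimal-polynomial argument, and the twist relations (7),(8) are read off from \eqref{eq:ribbon}. You also anticipate needing the square relation to reduce composites like $e_i g_{i+1} e_i$ or $g_i g_{i+1} e_i$; in fact the paper observes that relations (3)--(6) follow purely from Reidemeister~II/III, monoidality, and pivotal isotopy, with no appeal to the web relations beyond the circle value. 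So your outline is sound, just slightly heavier than necessary.
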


Since all $\Hom$-spaces in $\Web(\spn[6])$ can be identified with subspaces of 
$\End_{\Web(\spn[6])}(1^{\otimes k})$ for some $k$, 
we suspect that equation \eqref{eq:HomDim}, and thus Conjecture \ref{conj}, 
can be deduced by showing that the map in Theorem \ref{thmBMW} is surjective, 
and comparing its kernel to that of 
$\mathrm{BMW}_k \twoheadrightarrow \End_{\spn[6]}(V^{\otimes k})$.

Although we do not prove \eqref{eq:HomDim} in general 
(this will be pursued further in \cite{BELR}), 
we show that it holds for the endomorphism algebra of the monoidal unit, 
and that the left-hand side is indeed finite 
(which is not guaranteed a priori):

\begin{thm}\label{thmEmpty}
$\End_{\Web(\spn[6])}(\varnothing) \cong \C(q)$. 
Further, for all objects $\vec{k}, \vec{\ell}$ in $\Web(\spn[6])$, 
the $\C(q)$-vector space
$\Hom_{\Web(\spn[6])}(\vec{k}, \vec{\ell})$ is finite-dimensional.
\end{thm}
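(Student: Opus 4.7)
The plan is to prove both assertions by means of a reduction algorithm, driven by the relations in \eqref{eq:spRel}, that strictly decreases the number of trivalent vertices in a web. For closed webs the algorithm terminates at a scalar in $\C(q)$, giving the first statement; for general morphisms it produces a finite spanning set, giving the second.

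For the endomorphism algebra of the empty object, I consider a closed web $D$ and regard its underlying trivalent graph as embedded in $S^2$, partitioning $S^2$ into faces. Combining Euler's formula $V-E+F=2$ with the trivalent identity $3V=2E$ yields the standard bound on the average face size. I would refine this using a weighted count that takes into account the fact that Definition \ref{def:Web} permits only the vertex types $(1,1,2)$ and $(1,2,3)$, and then argue that every nonempty closed web must contain either a bigon, a triangle, or a square face whose boundary labels match one of the patterns on the left-hand side of a relation in \eqref{eq:spRel}, or one of the derived consequences recorded thereafter. In each case, the relation expresses $D$ as a linear combination of webs with strictly fewer vertices; induction then reduces $D$ to a linear combination of disjoint labeled loops, each of which is evaluated by the loop relations (the first of \eqref{eq:spRel} together with the two derived loop formulas).

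For the second statement, I would apply the same algorithm to the interior faces of a general web $w : \vec{k} \to \vec{\ell}$. Away from a small collar neighborhood of the boundary, the face-count argument proceeds exactly as in the closed case, so each rewriting strictly decreases the number of interior vertices. The algorithm therefore yields a spanning set of $\Hom_{\Web(\spn[6])}(\vec{k}, \vec{\ell})$ consisting of webs with bounded interior complexity; since the number of such webs with fixed boundary is finite up to planar isotopy, finite-dimensionality follows. The ladder-like webs introduced in Section \ref{sec:Closed} are expected to furnish a concrete model for this spanning set, and the pivotal duality identifying $\Hom(\vec{k}, \vec{\ell})$ with $\Hom(\varnothing, \vec{k}^{*} \otimes \vec{\ell})$ means that in principle the finite-dimensionality could also be bootstrapped directly from the closed-web algorithm applied to the webs in that dual presentation.

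The principal obstacle is the combinatorial face analysis. The three edge labels and two distinct vertex types produce a considerably richer collection of small-face boundary patterns than in the rank-two analyses of Kuperberg, and the relations in \eqref{eq:spRel} only directly handle certain preferred ones; in particular, some bigons, triangles, and squares with mixed labels (e.g. a bigon with one $1$-edge and one $2$-edge, or a square whose sides alternate labels in an unforeseen way) are not treated by a relation on the nose. I expect to need to enumerate the possible small-face boundary cycles compatible with the vertex types, derive additional relations by composing the given ones whenever the listed relations do not apply directly, and then verify that a suitably weighted Euler argument guarantees the existence of a reducible face in every nonempty configuration; this is where the bulk of the work will lie.
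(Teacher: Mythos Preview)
Your approach follows Kuperberg's rank-$2$ template, but there is a genuine gap: the relations in \eqref{eq:spRel} do \emph{not} constitute a terminating rewriting system on faces, and the Euler-characteristic argument you sketch does not guarantee the existence of a reducible face. Two concrete obstructions: (i) the switching relations in the third line of \eqref{eq:spRel} relate two webs with the \emph{same} number of trivalent vertices (an $I$-shape and an $H$-shape), so ``number of vertices'' is not a strictly decreasing complexity measure under your rewriting moves; (ii) even a weighted Euler count only guarantees a face of degree $\leq 5$, but the list of relations contains no pentagon relation at all, and only a handful of the many possible labeled squares. You acknowledge this obstacle, but ``I expect to derive additional relations'' is precisely the hard part---it is not known whether a confluent reduction system exists for $\spn[6]$ webs, and the paper does not claim one.

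The paper's proof avoids the face analysis entirely. It introduces an auxiliary category $\Lad(\gspn[6])$ of ladder-shaped webs (with extra generating objects $k^{(i)}$ recording powers of a formal $1$-dimensional representation) together with a functor $\Phi:\Lad(\gspn[6])\to\Web(\spn[6])$, and shows that every web lifts to a ladder. The relations in $\Lad(\gspn[6])$ are organized so that a PBW-type argument applies: every ladder is a $\C(q)$-combination of ladders in which all $E$-rungs precede all $F$-rungs. For a closed web, an induction on the number of $0'$-labels in the domain of its ladder lift then forces the leftmost upright to be inessential, eventually reducing to the empty ladder. Finite-dimensionality of general $\Hom$-spaces follows because there are only finitely many PBW-ordered ladders with fixed (co)domain; this is combined with the surjections $\Hom(1^{\otimes r},1^{\otimes s})\twoheadrightarrow\Hom(\vec{k},\vec{\ell})$ from the proof of Theorem~\ref{thm:Full}. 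The ladder machinery is the essential new idea---it replaces the unavailable confluent planar reduction by an algebraic straightening argument.
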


This first statement is equivalent to the fact that every closed web in 
$\Web(\spn[6])$ can be evaluated to an element of $\C(q)$. 
Our proof provides an explicit algorithm for this evaluation.

Another approach to the resolution of Conjecture \ref{conj} is via work of 
Tuba and Wenzl \cite{TubaWenzl}. 
Therein, they classify the semisimple braided monoidal categories whose Grothendieck 
ring agree with that of $\Rep(U_q(\spn))$, in terms of the eigenvalues of the braiding. 
The Grothendieck ring of a semisimple linear category $\mathcal{C}$ is known to agree with its
categorical trace $\Tr(\mathcal{C})$ (also known as its $0^{th}$ Hochschild-Mitchell homology). 
The latter is amenable to diagrammatic methods, 
as it has an interpretation as the corresponding skein module of the annulus.
To this end, we show the following.

\begin{thm}\label{thmTr}
The functor $\Psi$ induces an isomorphism $\Tr(\Web(\spn[6])) \cong \Tr(\Rep(U_q(\spn[6])))$ 
of commutative $\C(q)$-algebras.
\end{thm}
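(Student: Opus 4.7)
The plan is to exhibit $\Tr(\Web(\spn[6]))$ as generated, as a commutative $\C(q)$-algebra, by the annular closures of the three identity morphisms on the fundamental objects, and to combine this with the algebraic independence of their images in $\Tr(\Rep(U_q(\spn[6])))$ to deduce that $\Tr(\Psi)$ is an isomorphism. As a first step, I identify the target: because $\Rep(U_q(\spn[6]))$ is semisimple, its trace agrees with the complexified Grothendieck ring, and since $U_q(\spn[6])$ corresponds to a simply connected group, the latter is the polynomial ring $\C(q)[V,W,U]$ on the three fundamental representations.

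Next, I use that on any ribbon category the trace carries a commutative $\C(q)$-algebra structure via $[X,f]\cdot[Y,g]:=[X\otimes Y,\,f\otimes g]$; since $\Psi$ is braided monoidal (Theorem \ref{thmFull}), the induced map $\Tr(\Psi)$ is an algebra homomorphism. Writing $x_i := [i,\,\mathrm{id}_i]$ for the class of the identity on the fundamental object $i\in\{1,2,3\}$, we have $\Tr(\Psi)(x_1)=[V]$, $\Tr(\Psi)(x_2)=[W]$, $\Tr(\Psi)(x_3)=[U]$. Surjectivity of $\Tr(\Psi)$ is then immediate since these images generate $\C(q)[V,W,U]$; alternatively, it follows directly from $\Psi$ being full and essentially surjective (Theorem \ref{thmFull}), which always implies surjectivity of the induced map on traces via conjugation by a chosen isomorphism.

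The main content, and the chief obstacle, is the reverse statement that $x_1, x_2, x_3$ already generate $\Tr(\Web(\spn[6]))$ as a $\C(q)$-algebra. Diagrammatically, $x_1^a x_2^b x_3^c$ is represented by $a+b+c$ parallel concentric labeled circles in the annulus; the claim is that every annular closed $\spn[6]$-web can be reduced, using the local relations of Definition \ref{def:Web}, isotopy, and the cyclic identification intrinsic to the trace, to a $\C(q)$-linear combination of such parallel-circle configurations. My strategy is to adapt the techniques underlying Theorem \ref{thmEmpty}: the category $\Lad$ of ladder-like webs introduced in Section \ref{sec:Closed} should admit an annular variant in which every annular closed web is equivalent to an annular ladder diagram. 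The cyclic invariance of the trace then lets any ``rung'' slide around the annulus, and the ladder relations together with the closed-web evaluation of Theorem \ref{thmEmpty} should absorb all rungs into the rails, leaving only parallel rails. Granting this spanning statement, injectivity of $\Tr(\Psi)$ is automatic: any polynomial relation $p(x_1,x_2,x_3)=0$ in $\Tr(\Web(\spn[6]))$ would project to $p([V],[W],[U])=0$ in $\C(q)[V,W,U]$, forcing $p=0$ by algebraic independence and yielding the claimed isomorphism of commutative $\C(q)$-algebras.
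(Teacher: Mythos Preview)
Your overall architecture matches the paper's: identify $\Tr(\Rep(U_q(\spn[6])))$ with the polynomial ring in the fundamental classes, show $\Tr(\Psi)$ is a surjective algebra map, and reduce everything to proving that the annular classes $x_1,x_2,x_3$ of the identity morphisms generate $\Tr(\Web(\spn[6]))$. You also correctly identify that this spanning statement is the crux.

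The gap is in how you propose to prove spanning. Your sentence ``The cyclic invariance of the trace then lets any `rung' slide around the annulus, and the ladder relations together with the closed-web evaluation of Theorem~\ref{thmEmpty} should absorb all rungs into the rails'' is not an argument. Sliding a rung cyclically does not, by itself, simplify anything; and Theorem~\ref{thmEmpty} concerns planar closed webs, so it does not directly apply to webs with essential annular strands. What is needed is a termination mechanism, and you have not supplied one.

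The paper's argument (following \cite{QRS}) is this. After ladderizing via Corollary~\ref{cor:ladderize} and exploding all rungs to mass-one rungs, associate to the trace class $\tr(L)$ the \emph{cyclic sequence} of mass tuples $\vec\mu_1,\dots,\vec\mu_{\#r(L)}\in\N^m$, one for each horizontal slice between consecutive rungs. Pick a slice whose tuple is lexicographically minimal. Minimality forces the rung immediately below it to be an $F$-rung and the rung immediately above to be an $E$-rung, so a rung-swap~\eqref{eq:swap} or square~\eqref{eq:square} relation applies at that spot. Each such application either strictly increases the minimal tuple or decreases the number of slices realizing it. Since the total mass $\Tot_\mu(L)$ is preserved by these relations, the minimal tuple is bounded above, and once it is large enough the ladder must have no rungs at all, i.e.\ it is an identity. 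This gives the termination you were missing; the cyclic-slide move is used only implicitly, in that the ``below/above'' pair at the minimal slice may wrap around the annulus.
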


Using the aformentioned results of Tuba-Wenzl, Conjecture \ref{conj} would follow 
from showing that all endomorphism algebras in $\Web(\spn[6])$ are semisimple $\C(q)$-algebras.

As a final piece of supporting evidence for Conjecture \ref{conj}, 
we note that restricting along the inclusion $U_q(\spn[4]) \hookrightarrow U_q(\spn[6])$ 
gives a functor $\Rep(U_q(\spn[6])) \to \Rep(U_q(\spn[4]))$. 
An analogue for $\Web(\spn)$ is established in \cite{LoganThesis} that we state, without proof, here.
Denote the additive closure of a category $\mathcal{C}$ by $\mathcal{C}^\oplus$, 
then we have the following.

\begin{thm}\label{thmRes}
There is a functor $\Web(\spn[6]) \to \Web(\spn[4])^\oplus$.
\end{thm}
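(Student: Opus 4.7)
The plan is to construct the functor indirectly, by combining the functor $\Psi$ of Theorem \ref{thmFull} with the representation-theoretic restriction functor $\Res$ along the embedding $U_q(\spn[4]) \hookrightarrow U_q(\spn[6])$, and then transporting back to $\Web(\spn[4])^\oplus$ via Kuperberg's equivalence (Theorem \ref{thm:sp4}).

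The first step is to compute the branching rules for the fundamental representations of $U_q(\spn[6])$. The standard representation restricts as $V|_{U_q(\spn[4])} \cong V_4 \oplus \C(q)^{\oplus 2}$, where $V_4$ denotes the standard $U_q(\spn[4])$-representation; a short calculation with exterior powers then yields
\begin{align*}
W|_{U_q(\spn[4])} &\cong W_4 \oplus V_4^{\oplus 2} \oplus \C(q), \\
U|_{U_q(\spn[4])} &\cong V_4 \oplus W_4^{\oplus 2},
\end{align*}
which can be cross-checked against the quantum Weyl dimension formula. In particular, the restriction of any object of $\FRep(U_q(\spn[6]))$ lies in $\FRep(U_q(\spn[4]))^\oplus$, so the composition $\Res \circ \Psi$ defines a functor $\Web(\spn[6]) \to \FRep(U_q(\spn[4]))^\oplus$.

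Post-composing with a quasi-inverse to the additive extension of Kuperberg's equivalence then produces the desired functor $\Web(\spn[6]) \to \Web(\spn[4])^\oplus$. Because this is a composition of well-defined functors, no relations need to be checked by hand; every relation of Definition \ref{def:Web} is enforced automatically.

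The main obstacle with this approach is its indirectness: the resulting functor is only specified up to natural isomorphism, and the images of the generating morphisms are not presented explicitly. A more satisfying construction---presumably the one carried out in \cite{LoganThesis}---would specify images for the generating trivalent vertices directly as $\C(q)$-linear combinations of type $C_2$ webs, with coefficients read off from the isotypic projections in the branching decompositions above, and then verify the defining relations of $\Web(\spn[6])$ by direct web calculus in $\Web(\spn[4])^\oplus$. The bulk of the labor in the direct approach lies in checking the last three relations of \eqref{eq:spRel}: upon expansion over the summands of the restricted objects, these decompose into a large number of web identities in $\Web(\spn[4])^\oplus$, each routine via Kuperberg's relations but collectively requiring substantial bookkeeping.
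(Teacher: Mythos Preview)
The paper does not actually prove this theorem: immediately before stating it, the authors write that the result ``is established in \cite{LoganThesis} that we state, without proof, here.'' So there is no proof in the paper to compare your attempt against.

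Your indirect argument is correct. The composite
\[
\Web(\spn[6]) \xrightarrow{\Psi} \FRep(U_q(\spn[6])) \xrightarrow{\Res} \Rep(U_q(\spn[4]))
\]
is a well-defined functor, and your branching computations (which are correct; one can verify $U|_{U_q(\spn[4])}$ by subtracting $V|_{U_q(\spn[4])}$ from $\Lambda^3 V|_{U_q(\spn[4])}$ as you indicate) confirm that the image of every generating object lies in $\FRep(U_q(\spn[4]))^\oplus$. Since $\FRep(U_q(\spn[4]))^\oplus$ is closed under tensor products, the whole composite lands there, and post-composing with the inverse of the additive extension of Kuperberg's equivalence finishes the construction.

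Your own closing remarks are apt: the phrasing in the paper, together with the motivation given (that this result is ``supporting evidence'' for Conjecture \ref{conj}), suggests that the intended content of the theorem is an \emph{explicit diagrammatic} functor defined on generators and checked on relations, independent of Conjecture \ref{conj} or of $\Psi$. Your construction, by factoring through $\Psi$ and the representation category, is logically valid but somewhat circular as evidence for the conjecture, since it uses the representation-theoretic target rather than working purely at the level of webs. That said, as a proof of the bare existence statement it stands.
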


\begin{rem}
Our interest in a graphical description for $\Rep(U_q(\spn))$ is partially motivated 
by considerations in low-dimensional topology. 
Indeed, given any simple Lie algebra $\mathfrak{g}$, 
there is a corresponding Reshetikhin-Turaev invariant $P_\mathfrak{g}(\mathcal{L}) \in \Z[q,q^{-1}]$ 
of framed links $\mathcal{L} \subset S^3$, 
defined using the ribbon structure on $\Rep(U_q(\mathfrak{g}))$; see \cite{RT1}. 
A graphical description for $\Rep(U_q(\mathfrak{g}))$ thus provides a graphical description 
for the corresponding link polynomials. 
In type $A$, such descriptions underly generators-and-relations constructions of Khovanov 
and Khovanov-Rozansky link homology via cobordisms and foams \cite{Kh3,BN2,MSV,QR1}. 
In turn, deformations of these foam/cobordism $2$-categories \cite{BNM,RW} 
play a crucial role in the definition and study of 
associated concordance invariants and slice-genus lower bounds \cite{Ras2,Wu3,Lobb,LewLobb}.
As such, we view a graphical description of $\Rep(U_q(\mathfrak{g}))$ as the first step in 
a long program to provide topological applications for link homology theories outside type $A$. 
Such homological invariants of links are known to exist by work of Webster \cite{Webster}.

To this end, we note that 
Theorem \ref{thmEmpty} (or Theorem \ref{thmTr}) implies that
$\Web(\spn[6])$ indeed gives a generators-and-relations, graphical setting for 
the $\spn[6]$ quantum link polynomial. 
To wit, given a link diagram 
(or braid closure, if using Theorem \ref{thmTr}), 
explicit formulae assign to it a $\C(q)$-linear combination of webs, 
which can be evaluated to an element in $\C(q)$ using the aforementioned theorems.
As usual, a skein-theoretic argument shows that this invariant 
actually takes values in $\Z[q,q^{-1}] \subset \C(q)$. 
As such, we believe that a manifestly integral version of Theorem \ref{thmEmpty} should hold.
See Section \ref{sec:Links} for full details.
\end{rem}

\section{The functor from $\Web(\spn[6])$ to $\FRep(U_q(\spn[6]))$}\label{sec:FullAndBMW}

In this section, we define a functor $\Psi:\Web(\spn[6]) \to \FRep(U_q(\spn[6]))$ 
and prove that it is full and essentially surjective.
Along the way, we endow $\Web(\spn[6])$ with the structure of a ribbon category that is compatible with 
the ribbon structure on $\FRep(U_q(\spn[6]))$, 
and establish a relation between $\Web(\spn[6])$ and the Birman-Murakami-Wenzl (BMW) algebra.
As such, we deduce Theorems \ref{thmFull}  and \ref{thmBMW} above.

We begin by defining the functor $\Psi$.
A direct approach -- e.g. by presenting 
$\Web(\spn[6])$ by generators and relations \emph{as a monoidal category}, 
explicitly specifying the images of generating morphisms,
and checking monoidally generating relations --
is possible, but tedious (see \cite{LoganThesis} for the relevant formulae).
To avoid the numerous computations such an approach would require, we instead proceed via an indirect, 
more-conceptual approach that exploits the ribbon structure on $\Rep(U_q(\spn[6]))$.

\begin{thm}\label{thm:Psi}
There exists an essentially surjective functor $\Psi: \Web(\spn[6]) \to \FRep(U_q(\spn[6]))$.
\end{thm}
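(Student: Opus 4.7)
The plan is to define $\Psi$ on objects by $\Psi(k) := \Gamma_{\omega_k}$ for $k \in \{1,2,3\}$, extended monoidally; essential surjectivity is then immediate since $\FRep(U_q(\spn[6]))$ is, by definition, tensor-generated by $V$, $W$, $U$. The bulk of the work is showing that this assignment extends to a well-defined functor on morphisms, i.e., that the defining relations \eqref{eq:spRel} are respected.

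For the two trivalent generators, the tensor product decompositions \eqref{eq:TensorDecomp} together with Schur's lemma force $\Hom_{\spn[6]}(V \otimes V, W)$ and $\Hom_{\spn[6]}(V \otimes W, U)$ each to be one-dimensional, so I would fix nonzero spanning maps and define the images of the two trivalent generators to be these, with normalizations to be determined. The ribbon-theoretic viewpoint alluded to by the authors is that these spanning maps can be produced intrinsically from the $R$-matrix: the braiding on $V \otimes V$ has three distinct eigenvalues separating the summands $\Gamma_{2,0,0} \oplus W \oplus \C(q)$, so the idempotent in $\End_{\spn[6]}(V \otimes V)$ projecting onto the $W$-isotypic component is a polynomial in the braiding, and the induced surjection $V \otimes V \twoheadrightarrow W$ is (up to scalar) the desired image of the first generator; an analogous construction on $V^{\otimes 3}$ produces the second. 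This frames the entire construction in terms of the ribbon/BMW data on $V$, avoiding the highest-weight-vector computations that a direct approach would require.

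To verify the relations, each is read as an identity in a specific Hom space of $\FRep(U_q(\spn[6]))$, whose dimension I would compute by iterating \eqref{eq:TensorDecomp}. The closed-loop and triangle relations live in one-dimensional Hom spaces and reduce to scalar identities verified via \eqref{eq:qDim}. The I=H and square relations live in $\End_{\spn[6]}(V^{\otimes 2})$ and $\Hom_{\spn[6]}(V \otimes W, W \otimes V)$ (each of small dimension, at most four by iterating \eqref{eq:TensorDecomp}); taking the projectors onto isotypic components as a basis reduces each relation to a finite scalar check, which can be performed either directly on a highest-weight vector or by invoking the BMW-type cubic identity satisfied by the braiding.

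The main obstacle, and the reason the authors prefer this indirect approach, is the coordinated choice of normalizations: each trivalent generator may be independently rescaled, and the scalars on the right-hand sides of \eqref{eq:spRel} are rigidly coupled to those choices. After normalizing the first generator so that $\Psi$ of the $1$-labeled circle matches $\dim_q V = -[3][8]/[4]$, and the second by matching a triangle relation involving the $3$-labeled edge, every remaining scalar in \eqref{eq:spRel} is determined. That all relations are then simultaneously satisfied -- rather than becoming over-determined -- is the substantive content of the theorem; as the authors note after Definition \ref{def:Web}, \eqref{eq:spRel} are, up to rescaling, the \emph{only} relations possible among these webs in $\FRep(U_q(\spn[6]))$, so consistency is expected but still requires a uniform verification, best packaged by appealing to the ribbon structure rather than computing each relation by hand.
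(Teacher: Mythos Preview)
Your high-level strategy matches the paper's: pick nonzero morphisms for the trivalent generators (using one-dimensionality of the relevant Hom spaces), then verify the relations. There are, however, two genuine gaps.

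First, you do not address the pivotal/self-duality structure. $\Web(\spn[6])$ is a strict pivotal category with self-dual generators, so $\Psi$ must be a \emph{pivotal} functor, and the cups, caps, and trivalent vertices must be sent compatibly with self-duality. This is not automatic in type $C$: with the standard ribbon element, the odd fundamental representations have Frobenius--Schur indicator $-1$, so there is no coherent self-duality on the target. The paper first invokes the non-standard ribbon element of Remark~\ref{rem:Ribbon} (all indicators $+1$), then applies the coherence theorem for pivotal categories to equip $\FRep(U_q(\spn[6]))$ with a strict self-duality structure; only then does the universal property of the free strict pivotal category on the given generators produce $\Psi$. Your proposal skips this entirely, and without it the functor is not even well-defined on the pivotal generators.

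Second, you correctly identify the coupled-normalization problem as the crux, but ``reduces to a finite scalar check'' is not a mechanism for computing those scalars. The paper's method is to \emph{derive} the coefficients rather than verify them: it expands $\beta_{V,V}$ in the web basis with unknowns $\kappa,\lambda,\mu$, then uses $\beta^{-1}\beta=\id$, the ribbon-element values $\nu|_V=-q^{-7}$ and $\nu|_W=q^{-12}$, and a $q\to 1$ specialization (to rule out the spurious root $\kappa=-q$ and to determine the sign $\gamma=-1$) to solve for everything. Each relation in \eqref{eq:spRel} then falls out as a forced consequence, and the remaining freedom (the bigon values $\delta,\delta'$) is absorbed by rescaling the (co)units for $W$ and $U$. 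Your spectral-decomposition idea for producing the projector onto $W$ is sound, but it does not by itself determine, e.g., whether the coefficient in the I$=$H relation is $[2]$ or $-[2]$, or why the square-relation coefficients are $[3]^2,\,-1/[2],\,[3]^2/[2]$; for that you need the ribbon-element input (or the explicit matrix computations the paper says it is avoiding). Also, your proposed normalization via the $1$-labeled circle does nothing: that circle equals $\dim_q V$ regardless of how the trivalent vertex is scaled---it is the bigon values, not the circle, that fix the vertex normalizations.
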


\begin{proof}
Let $\FRep^{\pm}(U_q(\spn[6]))$ denote the full subcategory of $\Rep(U_q(\spn[6]))$ 
monoidally generated by the fundamental representations and their duals. 
By the coherence theorem for pivotal categories (see e.g. \cite{NS} or \cite{BarWes}),
this is equivalent, as a pivotal category, to a strict pivotal category. 
Further, since we work with the non-standard pivotal structure from Remark \ref{rem:Ribbon},
all non-zero Frobenius-Schur indicators are $+1$. 
This implies that the full subcategory $\FRep(U_q(\spn[6]))$ inherits the structure of a strict pivotal 
category with self duality structure, in the sense of \cite{Sel2}. 
The latter consists of a choice of coherent isomorphism $X \cong X^*$ for all 
objects $X$ in $\FRep(U_q(\spn[6]))$ that is compatible with the pivotal structure.

Next, equation \eqref{eq:TensorDecomp} implies that both 
$\Hom_{\spn[6]}(V \otimes V, W)$ and $\Hom_{\spn[6]}(V \otimes W, U)$
are $1$-dimensional. 
Choosing a non-zero morphism in each of these $\Hom$-spaces, 
it follows (e.g. from the results in \cite{Sel1}) that there exists
a pivotal functor $\Psi$ from the strict pivotal category freely generated by 
self-dual objects $\{1,2,3\}$ and the morphisms
\[
\xy
(0,0)*{
\begin{tikzpicture}[scale =.5, smallnodes]
	\draw[1label] (0,0) node[below]{$1$} to [out=90,in=210] (.5,.75);
	\draw[1label] (1,0) node[below]{$1$} to [out=90,in=330] (.5,.75);
	\draw[2label] (.5,.75) to (.5,1.5) node[above]{$2$};
\end{tikzpicture}
};
\endxy 
\text{ and }
\xy
(0,0)*{
\begin{tikzpicture}[scale =.5, smallnodes]
	\draw[1label] (0,0) node[below]{$1$} to [out=90,in=210] (.5,.75);
	\draw[2label] (1,0) node[below]{$2$} to [out=90,in=330] (.5,.75);
	\draw[3label] (.5,.75) to (.5,1.5) node[above]{$3$};
\end{tikzpicture}
};
\endxy
\]
to $\FRep(U_q(\spn[6]))$
that sends $1 \mapsto V$, $2 \mapsto W$, and $3 \mapsto U$, 
and sends these webs to the chosen morphisms.
By construction, $\Psi$ is essentially surjective.

It remains to show that $\Psi$ descends to the quotient 
obtained by imposing the relations in \eqref{eq:spRel}.
First, note that
\[
\Psi \left(
\xy
(0,0)*{
\begin{tikzpicture}[scale=.2]
	\draw [1label] (0,-2.75) to [out=30,in=0] (0,-.25);
	\draw [1label] (0,-2.75) to [out=150,in=180] (0,-.25);
	\draw [2label] (0,-4.5) to (0,-2.75);
\end{tikzpicture}
};
\endxy
\right)
= 0
\;\; \text{and} \;\;
\Psi \left( \;
\xy
(0,0)*{
\begin{tikzpicture}[scale=.2]
	\draw[2label] (-1,0) to (1,0);
	\draw[1label] (-1,0) to (0,1.732);
	\draw[1label] (1,0) to (0,1.732);
	\draw[2label] (0,1.732) to (0,3.232);
	\draw[3label] (-2.3,-.75) to (-1,0);
	\draw[3label] (2.3,-.75) to (1,0);
\end{tikzpicture}
};
\endxy
\; \right)
= 0
\]
since $\Hom_{\spn[6]}(W,\C(q))$ and $\Hom_{\spn[6]}(U\otimes U,W)$
are both trivial, and that 
\[
\Psi \left(
\xy
(0,0)*{
\begin{tikzpicture}[scale=.2]
	\draw [2label] (0,.75) to (0,2.5);
	\draw [1label] (0,-2.75) to [out=30,in=330] (0,.75);
	\draw [1label] (0,-2.75) to [out=150,in=210] (0,.75);
	\draw [2label] (0,-4.5) to (0,-2.75);
\end{tikzpicture}
};
\endxy
\right)
= \delta \cdot \id_W
\]
for some $\delta \in \C(q)$ since $\Hom_{\spn[6]}(W,W) \cong \C(q)$.
Further, equation \eqref{eq:qDim} implies that 
\begin{equation}\label{eq:circles}
\Psi \left(
\xy
(0,0)*{
\begin{tikzpicture}[scale =.75, smallnodes]
	\draw[1label] (0,0) circle (.5);
\end{tikzpicture}
};
\endxy
\right)
= -\frac{[3][8]}{[4]}
\;\; , \;\;
\Psi \left(
\xy
(0,0)*{
\begin{tikzpicture}[scale =.75, smallnodes]
	\draw[2label] (0,0) circle (.5);
\end{tikzpicture}
};
\endxy
\right)
= \frac{[7][8]}{[4]} 
\;\; , \;\;
\Psi \left(
\xy
(0,0)*{
\begin{tikzpicture}[scale =.75, smallnodes]
	\draw[3label] (0,0) circle (.5);
\end{tikzpicture}
};
\endxy
\right)
= -\frac{[6][7][8]}{[2][3][4]} 
\end{equation}

Equation \eqref{eq:TensorDecomp} implies that the images under $\Psi$ of
\begin{equation}\label{eq:VVbasis}
\xy
(0,0)*{
\begin{tikzpicture}[scale =.35, smallnodes]
	\draw[1label] (0,0) to (0,2.25);
	\draw[1label] (1,0) to (1,2.25);
\end{tikzpicture}
};
\endxy
\;\; , \;\;
\xy
(0,0)*{
\begin{tikzpicture}[scale =.35, smallnodes]
	\draw[1label] (0,0) to [out=90,in=180] (.5,.75);
	\draw[1label] (1,0) to [out=90,in=0] (.5,.75);
	\draw[1label] (.5,1.5) to [out=180,in=270] (0,2.25);
	\draw[1label] (.5,1.5) to [out=0,in=270] (1,2.25);
\end{tikzpicture}
};
\endxy
\;\; , \;\;
\xy
(0,0)*{
\begin{tikzpicture}[scale =.35, smallnodes]
	\draw[1label] (0,0) to [out=90,in=210] (.5,.75);
	\draw[1label] (1,0) to [out=90,in=330] (.5,.75);
	\draw[2label] (.5,.75) to (.5,1.5);
	\draw[1label] (.5,1.5) to [out=150,in=270] (0,2.25);
	\draw[1label] (.5,1.5) to [out=30,in=270] (1,2.25);
\end{tikzpicture}
};
\endxy
\end{equation}
are linearly independent, hence give a basis for $\End_{\spn[6]}(V\otimes V)$. 
This, together with the self-duality structure, imply that there exists 
$\alpha \in \C(q)$ and $\gamma \in \{-1,1\}$ so that
\begin{equation}\label{eq:Switching}
\Psi \Big(
\xy
(0,0)*{
\begin{tikzpicture}[scale=.2, rotate=90]
	\draw[1label] (-1,0) to (0,1);
	\draw[1label] (1,0) to (0,1);
	\draw[1label] (0,2.5) to (-1,3.5);
	\draw[1label] (0,2.5) to (1,3.5);
	\draw[2label] (0,1) to (0,2.5);
\end{tikzpicture}
};
\endxy
\Big)
+ \gamma \cdot
\Psi \left( \;
\xy
(0,0)*{
\begin{tikzpicture}[scale =.35, smallnodes]
	\draw[1label] (0,0) to [out=90,in=210] (.5,.75);
	\draw[1label] (1,0) to [out=90,in=330] (.5,.75);
	\draw[2label] (.5,.75) to (.5,1.5);
	\draw[1label] (.5,1.5) to [out=150,in=270] (0,2.25);
	\draw[1label] (.5,1.5) to [out=30,in=270] (1,2.25);
\end{tikzpicture}
};
\endxy
\; \right)
=
\alpha \cdot
\left(
\Psi \left( \;
\xy
(0,0)*{
\begin{tikzpicture}[scale =.35, smallnodes]
	\draw[1label] (0,0) to (0,2.25);
	\draw[1label] (1,0) to (1,2.25);
\end{tikzpicture}
};
\endxy
\; \right)
+ \gamma \cdot
\Psi \left( \;
\xy
(0,0)*{
\begin{tikzpicture}[scale =.35, smallnodes]
	\draw[1label] (0,0) to [out=90,in=180] (.5,.75);
	\draw[1label] (1,0) to [out=90,in=0] (.5,.75);
	\draw[1label] (.5,1.5) to [out=180,in=270] (0,2.25);
	\draw[1label] (.5,1.5) to [out=0,in=270] (1,2.25);
\end{tikzpicture}
};
\endxy
\; \right)
\right).
\end{equation}
Taking the closure of this relation, 
and applying the above relations, 
we see that
\[
\gamma \delta \frac{[7][8]}{[4]} 
= \alpha \frac{[3][8]}{[4]} \left(\frac{[3][8]}{[4]} - \gamma \right)
\]
so
\begin{equation}\label{eq:SwitchCoeff}
\delta[7]=[3]\alpha\left( \gamma([7]-1)-1\right).
\end{equation}
Here, we use that $\frac{[3][8]}{[4]}=[7]-1$.

Next, recall that $\FRep(U_q(\spn[6]))$ inherits a braiding $\beta$ 
from $\Rep(U_q(\spn[6]))$. 
It follows that
\[
\beta_{V,V} = 
\kappa \cdot \Psi \left( \;
\xy
(0,0)*{
\begin{tikzpicture}[scale =.35, smallnodes]
	\draw[1label] (0,0) to (0,2.25);
	\draw[1label] (1,0) to (1,2.25);
\end{tikzpicture}
};
\endxy
\; \right)
+
\lambda \cdot \Psi \left( \;
\xy
(0,0)*{
\begin{tikzpicture}[scale =.35, smallnodes]
	\draw[1label] (0,0) to [out=90,in=180] (.5,.75);
	\draw[1label] (1,0) to [out=90,in=0] (.5,.75);
	\draw[1label] (.5,1.5) to [out=180,in=270] (0,2.25);
	\draw[1label] (.5,1.5) to [out=0,in=270] (1,2.25);
\end{tikzpicture}
};
\endxy
\; \right)
+
\mu \cdot \Psi \left( \;
\xy
(0,0)*{
\begin{tikzpicture}[scale =.35, smallnodes]
	\draw[1label] (0,0) to [out=90,in=210] (.5,.75);
	\draw[1label] (1,0) to [out=90,in=330] (.5,.75);
	\draw[2label] (.5,.75) to (.5,1.5);
	\draw[1label] (.5,1.5) to [out=150,in=270] (0,2.25);
	\draw[1label] (.5,1.5) to [out=30,in=270] (1,2.25);
\end{tikzpicture}
};
\endxy
\; \right)
\]
for some $\kappa, \lambda,\mu \in \C(q)$. 
The self-duality structure then implies that
\[
\begin{aligned}
\beta_{V,V}^{-1} 
&= 
\kappa \cdot \Psi \left( \;
\xy
(0,0)*{
\begin{tikzpicture}[scale =.35, smallnodes]
	\draw[1label] (0,0) to [out=90,in=180] (.5,.75);
	\draw[1label] (1,0) to [out=90,in=0] (.5,.75);
	\draw[1label] (.5,1.5) to [out=180,in=270] (0,2.25);
	\draw[1label] (.5,1.5) to [out=0,in=270] (1,2.25);
\end{tikzpicture}
};
\endxy
\; \right)
+
\lambda \cdot \Psi \left( \;
\xy
(0,0)*{
\begin{tikzpicture}[scale =.35, smallnodes]
	\draw[1label] (0,0) to (0,2.25);
	\draw[1label] (1,0) to (1,2.25);
\end{tikzpicture}
};
\endxy
\; \right)
+
\mu \cdot \Psi \left( \;
\xy
(0,0)*{
\begin{tikzpicture}[scale=.2, rotate=90]
	\draw[1label] (-1,0) to (0,1);
	\draw[1label] (1,0) to (0,1);
	\draw[1label] (0,2.5) to (-1,3.5);
	\draw[1label] (0,2.5) to (1,3.5);
	\draw[2label] (0,1) to (0,2.5);
\end{tikzpicture}
};
\endxy
\; \right) \\
&= 
(\kappa+\mu \alpha \gamma) \cdot \Psi \left( \;
\xy
(0,0)*{
\begin{tikzpicture}[scale =.35, smallnodes]
	\draw[1label] (0,0) to [out=90,in=180] (.5,.75);
	\draw[1label] (1,0) to [out=90,in=0] (.5,.75);
	\draw[1label] (.5,1.5) to [out=180,in=270] (0,2.25);
	\draw[1label] (.5,1.5) to [out=0,in=270] (1,2.25);
\end{tikzpicture}
};
\endxy
\; \right)
+
(\lambda+\mu \alpha) \cdot \Psi \left( \;
\xy
(0,0)*{
\begin{tikzpicture}[scale =.35, smallnodes]
	\draw[1label] (0,0) to (0,2.25);
	\draw[1label] (1,0) to (1,2.25);
\end{tikzpicture}
};
\endxy
\; \right)
-
\mu\gamma \cdot \Psi \left( \;
\xy
(0,0)*{
\begin{tikzpicture}[scale =.35, smallnodes]
	\draw[1label] (0,0) to [out=90,in=210] (.5,.75);
	\draw[1label] (1,0) to [out=90,in=330] (.5,.75);
	\draw[2label] (.5,.75) to (.5,1.5);
	\draw[1label] (.5,1.5) to [out=150,in=270] (0,2.25);
	\draw[1label] (.5,1.5) to [out=30,in=270] (1,2.25);
\end{tikzpicture}
};
\endxy
\; \right).
\end{aligned}
\]

The equality $\beta_{V,V}^{-1}  \beta_{V,V} = \id_{V\otimes V}$, 
together with linear independence of the images of \eqref{eq:VVbasis}, 
implies that
\begin{equation}\label{eq:R2}
\begin{aligned}
\kappa(\lambda + \mu\alpha) &= 1 \\
\frac{\kappa}{\lambda} + \frac{\lambda+\mu \alpha}{\kappa+\mu \alpha \gamma}
&= \frac{[3][8]}{[4]} \\
\lambda + \mu\alpha &= \gamma(\kappa + \mu\delta).
\end{aligned}
\end{equation}
In particular, we deduce that
\begin{equation}\label{eq:Kappa1}
\kappa\gamma(\kappa + \mu\delta)=1.
\end{equation}

Setting 
\[
\xy
(0,0)*{
\begin{tikzpicture}[scale=.3]
	\draw[1label] (1,-1) to (-1,1);
	\draw[overcross] (-1,-1) to (1,1);
	\draw[1label] (-1,-1) to (1,1);
\end{tikzpicture}
};
\endxy = 
\kappa \;
\xy
(0,0)*{
\begin{tikzpicture}[scale =.35, smallnodes]
	\draw[1label] (0,0) to (0,2.25);
	\draw[1label] (1,0) to (1,2.25);
\end{tikzpicture}
};
\endxy
+
\lambda \;
\xy
(0,0)*{
\begin{tikzpicture}[scale =.35, smallnodes]
	\draw[1label] (0,0) to [out=90,in=180] (.5,.75);
	\draw[1label] (1,0) to [out=90,in=0] (.5,.75);
	\draw[1label] (.5,1.5) to [out=180,in=270] (0,2.25);
	\draw[1label] (.5,1.5) to [out=0,in=270] (1,2.25);
\end{tikzpicture}
};
\endxy
+
\mu \;
\xy
(0,0)*{
\begin{tikzpicture}[scale =.35, smallnodes]
	\draw[1label] (0,0) to [out=90,in=210] (.5,.75);
	\draw[1label] (1,0) to [out=90,in=330] (.5,.75);
	\draw[2label] (.5,.75) to (.5,1.5);
	\draw[1label] (.5,1.5) to [out=150,in=270] (0,2.25);
	\draw[1label] (.5,1.5) to [out=30,in=270] (1,2.25);
\end{tikzpicture}
};
\endxy
\]
we have that\footnote{We follow the convention, as in \cite{ST}, 
that the ribbon element $\nu$ acts as the \emph{negative} full twist.}
\[
\Big( \kappa - \lambda([7]-1) \Big) \id_V = 
\Psi\left(
\xy
(0,0)*{
\begin{tikzpicture}[scale=.5,yscale=-1]
	\draw[1label] (.8,-.4) to [out=180,in=270] (0,1);
	\draw[overcross] (0,-1) to [out=90,in=180] (.8,.4);
	\draw[1label] (0,-1) to [out=90,in=180] (.8,.4);
	\draw[1label] (.8,.4) to [out=0,in=90] (1.1,0) to [out=270,in=0] (.8,-.4);
\end{tikzpicture}
};
\endxy \right)
= \nu|_V \id_V = -q^{-(\omega_1,\omega_1+2\rho)} \id_V = -q^{-7} \id_V
\]
thus 
\begin{equation}\label{eq:Lambda}
\lambda = \frac{\kappa + q^{-7}}{[7]-1}
\end{equation}
Similarly, we compute that 
\[
\nu|_W^{-1} \cdot
\Psi \left( \;
\xy
(0,0)*{
\begin{tikzpicture}[scale =.35, smallnodes]
	\draw[2label] (.5,.75) to (.5,1.5);
	\draw[1label] (.5,1.5) to [out=150,in=270] (0,2.25);
	\draw[1label] (.5,1.5) to [out=30,in=270] (1,2.25);
\end{tikzpicture}
};
\endxy
\; \right)
=
\Psi\left( \;
\xy
(0,0)*{
\begin{tikzpicture}[scale=.35]
	\draw[2label] (.8,-.4) to [out=180,in=270] (0,1);
	\draw[overcross] (0,-1) to [out=90,in=180] (.8,.4);
	\draw[2label] (0,-1) to [out=90,in=180] (.8,.4);
	\draw[2label] (.8,.4) to [out=0,in=90] (1.1,0) to [out=270,in=0] (.8,-.4);
	\draw[1label] (0,1) to [out=150,in=270] (-.5,1.75);
	\draw[1label] (0,1) to [out=30,in=270] (.5,1.75);
\end{tikzpicture}
};
\endxy \right)
=
\Psi \left( \;
\xy
(0,0)*{
\begin{tikzpicture}[scale =.35, smallnodes]
	\draw[2label] (.5,.75) to (.5,1.5);
	\draw[1label] (.5,1.5) to [out=30,in=270] (1,2.25) to [out=90,in=270] (0,3.75);
	\draw[overcross] (0,2.25) to [out=90,in=270] (1,3.75);
	\draw[1label] (.5,1.5) to [out=150,in=270] (0,2.25) to [out=90,in=270] (1,3.75) to [out=90,in=270] (0,5.25);
	\draw[overcross] (0,3.75) to [out=90,in=270] (1,5.25);
	\draw[1label] (0,3.75) to [out=90,in=270] (1,5.25);
	\draw[1label] (1.5,6) to [out=0,in=90] (1.75,5.75) 
		to [out=270,in=0] (1.5,5.5) to [out=180,in=270] (1,6.5);
	\draw[thinovercross] (1,5.25) to [out=90,in=180] (1.5,6);
	\draw[1label] (1,5.25) to [out=90,in=180] (1.5,6);
	\draw[1label] (.5,6) to [out=0,in=90] (.75,5.75) 
		to [out=270,in=0] (.5,5.5) to [out=180,in=270] (0,6.5);
	\draw[thinovercross] (0,5.25) to [out=90,in=180] (0.5,6);
	\draw[1label] (0,5.25) to [out=90,in=180] (.5,6);
\end{tikzpicture}
};
\endxy
\; \right) =
\nu|_V^{-2} (\kappa+\mu\delta)^2 \cdot
\Psi \left( \;
\xy
(0,0)*{
\begin{tikzpicture}[scale =.35, smallnodes]
	\draw[2label] (.5,.75) to (.5,1.5);
	\draw[1label] (.5,1.5) to [out=150,in=270] (0,2.25);
	\draw[1label] (.5,1.5) to [out=30,in=270] (1,2.25);
\end{tikzpicture}
};
\endxy
\; \right)
\]
and since $\nu|_W = q^{-12}$ this gives
\begin{equation}\label{eq:Kappa2}
\kappa+\mu\delta = \pm q^{-1}.
\end{equation}
Equations \eqref{eq:Kappa1} and \eqref{eq:Kappa2} then imply that
\[
\kappa = \pm q.
\]

We further claim that we have $\kappa \neq -q$. 
Indeed, if $\kappa = -q$, then \eqref{eq:Lambda} implies that $\lambda|_{q=1} = 0$, 
$\mu\alpha|_{q=1} = -1$ by \eqref{eq:R2}.
Equation \eqref{eq:R2} also gives that $\mu\delta|_{q=1} = 1-\gamma$. 
Multiplying \eqref{eq:SwitchCoeff} by $\mu$ and evaluating at $q=1$ then gives that 
\[
7(1-\gamma) = -3(6\gamma-1)
\]
which implies that $\gamma=-4/11$, contradicting that $\gamma = \pm1$.

Thus, we have that $\kappa=q$. 
Equation \eqref{eq:Lambda} then implies that 
\[
\lambda = q^{-3} \frac{(q^4+q^{-4})[4]}{[3][8]} = \frac{q^{-3}}{[3]}
\]
so
\[
\mu\alpha = \kappa^{-1} - \lambda = q^{-1} - \frac{q^{-3}}{[3]} = \frac{[2]}{[3]}.
\]
Equation \eqref{eq:R2} this implies that 
$\mu\delta = \gamma q^{-1} - q$, so in particular $\mu\delta|_{q=1} = \gamma -1$. 
Multiplying \eqref{eq:SwitchCoeff} by $\mu$ and evaluating at $q=1$ as above then gives 
\[
7(\gamma - 1) = 2(6 \gamma - 1)
\] 
so $\gamma= -1$.
This in turn implies that $\mu\delta = -[2]$ and thus $\delta = -[3] \alpha$. 

Hence, if we can deduce the value of $\delta$, we will have identified all unknown coefficients, 
and deduced that all relations in \eqref{eq:spRel} that don't involve $3$-labeled edges hold
(We've also already deduced one relation involving a $3$-labeled edge holds.)
Note that we don't expect to explicitly identify $\delta$ at this point; 
indeed, there is some flexibility in our choice for this parameter, 
since changing our choice of 
$
\Psi \left( \;
\xy
(0,0)*{
\begin{tikzpicture}[scale =.35, smallnodes, yscale=-1]
	\draw[2label] (.5,.75) to (.5,1.5);
	\draw[1label] (.5,1.5) to [out=150,in=270] (0,2.25);
	\draw[1label] (.5,1.5) to [out=30,in=270] (1,2.25);
\end{tikzpicture}
};
\endxy
\; \right)
$
will change $\delta$ by the square of an element in $\C(q)$.
We thus can compute $\delta$ by 
choosing $M \in \Hom_{\spn[6]}(V\otimes V, W)$
and explicitly comparing the maps
\[
\Psi \left( \;
\xy
(0,0)*{
\begin{tikzpicture}[scale =.35, smallnodes, yscale=-1]
	\draw[2label] (.5,.75) to (.5,1.5);
	\draw[1label] (.5,1.5) to [out=150,in=270] (0,2.25) to [out=90,in=180] (1.5,4) to [out=0,in=90] (3,2.25);
	\draw[1label] (.5,1.5) to [out=30,in=270] (1,2.25) to [out=90,in=180] (1.5,3) to [out=0,in=90] (2,2.25);
	\draw[2label] (2.5,.75) to (2.5,1.5);
	\draw[1label] (2.5,1.5) to [out=150,in=270] (2,2.25);
	\draw[1label] (2.5,1.5) to [out=30,in=270] (3,2.25);
\end{tikzpicture}
};
\endxy
\; \right)
\text{ and }
\Psi \left( \;
\xy
(0,0)*{
\begin{tikzpicture}[scale =.5, smallnodes]
	\draw[2label] (0,1) to [out=270,in=180] (.5,0) to [out=0,in=270] (1,1);
\end{tikzpicture}
};
\endxy
\; \right)
\]
However, we now note that we have a choice in the value of the latter. 
Indeed, we can rescale the unit morphism for $W$ by any non-zero element in $\C(q)$, 
provided we also rescale its counit by the inverse. 
Hence, we can choose any non-zero value for $\delta$, 
and we let\footnote{Note that the different choice of $\delta = -[2][3]$ is used in \cite{LoganThesis}, 
which follows from the choices of (co)unit used there.} 
$\delta = [2][3]$. 
It then follows that $\alpha= -[2]$ and $\mu = \frac{-1}{[3]}$.

Next, since $\Hom_{\spn[6]}(V^{\otimes 3},U)$ is $1$-dimensional, 
we must have
\[
\Psi\left( \;
\xy
(0,0)*{

};
\endxy
\end{gathered}
\end{equation}
These assignments then determine $\beta_{k,l}^{-1}$ for $k,l \in \{1,2,3\}$ using the pivotal structure. 
Explicit formulae for the crossings in \eqref{eq:coloredbraiding} 
can be found in Section \ref{sec:Links}.

\begin{thm}
The formulae in equations \eqref{eq:braiding} and \eqref{eq:coloredbraiding} endow 
$\Web(\spn[6])$ with the structure of a ribbon category.
\end{thm}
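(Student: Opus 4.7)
The plan is to verify the braiding and ribbon axioms directly in $\Web(\spn[6])$. Because this category is strict pivotal by construction and pivotally generated by the object $1$ together with the trivalent vertices $m_{1,1}\colon 1 \otimes 1 \to 2$ and $m_{1,2}\colon 1\otimes 2 \to 3$, standard coherence results for pivotal tensor categories will reduce the verification that \eqref{eq:braiding}--\eqref{eq:coloredbraiding} define a braiding to three checks on $\beta_{1,1}$: (R2) that $\beta_{1,1}$ and $\beta_{1,1}^{-1}$ are mutual inverses; (R3) that $\beta_{1,1}$ satisfies the Yang--Baxter equation in $\End_{\Web(\spn[6])}(1^{\otimes 3})$; and (N) naturality of $\beta$ with respect to $m_{1,1}$ and $m_{1,2}$. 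The remaining braidings $\beta_{k,\ell}$ in \eqref{eq:coloredbraiding} are then forced by the hexagon, and a twist will be produced canonically from the pivotal structure.

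I will first dispatch (R2): expanding $\beta_{1,1}^{-1}\beta_{1,1}$ via \eqref{eq:braiding} in the three-element basis of $\End_{\Web(\spn[6])}(1^{\otimes 2})$ given by the identity, the cup-cap, and $m_{1,1}^{\ast}m_{1,1}$ yields exactly the scalar system solved in the proof of Theorem \ref{thm:Psi}, whose solution is $\kappa=q$, $\lambda=q^{-3}/[3]$, $\mu=-1/[3]$. For (N), naturality through $m_{1,1}$ amounts to a \textbf{pitchfork} identity of the form
\[
(\id_1\otimes m_{1,1})(\beta_{1,1}\otimes\id_1)(\id_1\otimes\beta_{1,1}) = \beta_{1,2}\,(m_{1,1}\otimes\id_1),
\]
which will hold essentially by the \emph{definition} of $\beta_{1,2}$ in \eqref{eq:coloredbraiding}, after absorbing the bigon relation $m_{1,1}^{\ast}m_{1,1}=[2][3]\cdot\id_2$ from \eqref{eq:spRel}; the analogous identities for $m_{1,2}$ and the mirror naturalities follow in the same way, since every $\beta_{k,\ell}$ in \eqref{eq:coloredbraiding} is built from $\beta_{1,1}$ by conjugation with compositions of Y-vertices.

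The main obstacle will be the Yang--Baxter verification (R3), a lengthy expansion whose cancellations rely on the ``square'' and the H-web relations in \eqref{eq:spRel}. One would like to shortcut it using the functor $\Psi$: the coefficients of $\beta_{1,1}$ in \eqref{eq:braiding} were chosen precisely so that $\Psi(\beta_{1,1})$ equals the honest $R$-matrix braiding $\beta_{V,V}$, and hence (R3) certainly holds after applying $\Psi$; however, faithfulness of $\Psi$ is not yet available, so one must verify (R3) directly, term by term, inside $\Web(\spn[6])$. Once (R2), (R3), and (N) are in hand, the twist is defined by the standard formula $\theta_k=(\mathrm{ev}_k\otimes\id_k)(\id_{k^{\ast}}\otimes\beta_{k,k})(\mathrm{coev}_k\otimes\id_k)$, and the ribbon identity $\theta_{k^{\ast}}=(\theta_k)^{\ast}$ will follow from the self-duality of each generator together with the fact (Remark \ref{rem:Ribbon}) that the non-standard pivotal structure used here has all Frobenius--Schur indicators equal to $+1$.
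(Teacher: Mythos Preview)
Your outline is close to the paper's, but there is a real gap in your treatment of naturality (N).

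You claim the pitchfork identity through $m_{1,1}$ ``holds essentially by definition'' of $\beta_{1,2}$ after absorbing a bigon. It does not. Unwinding \eqref{eq:coloredbraiding} gives
\[
\beta_{1,2}\;=\;\tfrac{1}{[2][3]}\,(m_{1,1}\otimes\id_1)\,\beta_{1,1\otimes 1}\,(\id_1\otimes m_{1,1}^{\ast}),
\]
so precomposing with $\id_1\otimes m_{1,1}$ yields
\[
\beta_{1,2}(\id_1\otimes m_{1,1})\;=\;(m_{1,1}\otimes\id_1)\,\beta_{1,1\otimes 1}\,(\id_1\otimes e_2),
\qquad e_2=\tfrac{1}{[2][3]}\,m_{1,1}^{\ast}m_{1,1}.
\]
The bigon in \eqref{eq:spRel} is $m_{1,1}m_{1,1}^{\ast}=[2][3]\,\id_2$ (your $m_{1,1}^{\ast}m_{1,1}=[2][3]\,\id_2$ is a type error), so $e_2$ is a nontrivial idempotent on $1\otimes1$, not the identity. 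For the displayed expression to equal $(m_{1,1}\otimes\id_1)\beta_{1,1\otimes 1}$ you need precisely that $e_2$ slides through $\beta_{1,1\otimes 1}$, i.e.\ $(e_2\otimes\id_1)\beta_{1,1\otimes 1}=\beta_{1,1\otimes 1}(\id_1\otimes e_2)$, which \emph{is} the naturality you are trying to prove. The argument is circular. The paper instead verifies the vertex--slide relations \eqref{eq:vertexslide} by an explicit computation in $\Web(\spn[6])$ (expanding $\beta_{1,1}$ via \eqref{eq:braiding} and simplifying with the $H$-- and square relations of \eqref{eq:spRel}); since faithfulness of $\Psi$ is unavailable, this computation cannot be bypassed.

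There is also a structural divergence worth flagging. You propose to establish (R3) by a brute-force nine-term expansion, independent of (N). The paper does the opposite: it proves (N) first and then \emph{uses} \eqref{eq:vertexslide} to get (R3) almost for free---resolve one crossing using \eqref{eq:braiding}, slide the resulting cup/cap and dumbbell through the remaining two crossings via pivotality and \eqref{eq:vertexslide}, and observe the result is invariant under $180^{\circ}$ rotation. Your route would work once (N) is actually established, but the paper's order of operations is shorter and clarifies that (N) is where the content lies.

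Finally, your ribbon step is too quick. Self-duality with Frobenius--Schur indicator $+1$ does not by itself force the left and right twists to agree in a braided pivotal category; the paper checks this concretely via \eqref{eq:ribbon}, computing both curls on a $1$-strand to be $-q^{\mp 7}$ so that their composite is the identity.
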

\begin{proof}
To begin, we extend the definition of $\beta$ to all objects 
$\vec{k} = (k_1,\ldots,k_m)$ and $\vec{l} = (l_1,\ldots,l_n)$
in $\Web(\spn[6])$ by setting
\[
\beta_{\vec{k},\vec{l}} :=
\xy
(0,0)*{
\begin{tikzpicture}[scale=.5,smallnodes]
	\draw[glabel] (3,0) node[below]{$l_1$} to [out=90,in=270] (0,3);
	\node at (4,0) {$\cdots$}; 
	\node at (4,3) {$\cdots$};
	\draw[glabel] (5,0) node[below]{$l_n$} to [out=90,in=270] (2,3);
	\draw[overcross] (0,0) to [out=90,in=270] (3,3);
	\draw[glabel] (0,0) node[below]{$k_1$} to [out=90,in=270] (3,3);
	\node at (1,0) {$\cdots$};
	\node at (1,3) {$\cdots$};
	\draw[overcross] (2,0) to [out=90,in=270] (5,3);
	\draw[glabel] (2,0) node[below]{$k_m$} to [out=90,in=270] (5,3);
\end{tikzpicture}
};
\endxy
\]
which also determines $\beta^{-1}$ using the pivotal structure. 
To see that $\beta$ indeed gives a braiding on $\Web(\spn[6])$, 
it suffices to see that $\beta$ is natural (with respect to morphisms in $\Web(\spn[6])$) 
and that the braid relations are satisfied.

The former follows via explicit computations that show that
\begin{equation}\label{eq:vertexslide}
\xy
(0,0)*{
\begin{tikzpicture}[scale=.75]
	\draw[glabel] (.5,-1) to [out=90,in=210] (.25,.25);
	\draw[glabel] (1,-1) to [out=90,in=-30] (.25,.25);
	\draw[glabel] (.25,.25) to [out=90,in=-90] (.25,.75);
	\draw[overcross] (0,-1) to [out=90,in=-90] (1,.75);
	\draw[glabel] (0,-1) to [out=90,in=-90] (1,.75);
\end{tikzpicture}
};
\endxy
=
\xy
(0,0)*{
\begin{tikzpicture}[scale=.75]
	\draw[glabel] (.5,-1) to [out=90,in=210] (.75,-.5);
	\draw[glabel] (1,-1) to [out=90,in=-30] (.75,-.5);
	\draw[glabel] (.75,-.5) to [out=90,in=-90] (.25,.75);
	\draw[overcross] (0,-1) to [out=90,in=-90] (1,.75);
	\draw[glabel] (0,-1) to [out=90,in=-90] (1,.75);
\end{tikzpicture}
};
\endxy
\quad \text{and} \quad
\xy
(0,0)*{
\begin{tikzpicture}[scale=.75]
	\draw[glabel] (0,-1) to [out=90,in=-90] (1,.75);
	\draw[overcross] (.5,-1) to [out=90,in=210] (.25,.25);
	\draw[glabel] (.5,-1) to [out=90,in=210] (.25,.25);
	\draw[overcross] (1,-1) to [out=90,in=-30] (.25,.25);
	\draw[glabel] (1,-1) to [out=90,in=-30] (.25,.25);
	\draw[glabel] (.25,.25) to [out=90,in=-90] (.25,.75);
\end{tikzpicture}
};
\endxy
=
\xy
(0,0)*{
\begin{tikzpicture}[scale=.75]
	\draw[glabel] (0,-1) to [out=90,in=-90] (1,.75);
	\draw[glabel] (.5,-1) to [out=90,in=210] (.75,-.5);
	\draw[glabel] (1,-1) to [out=90,in=-30] (.75,-.5);
	\draw[overcross] (.75,-.5) to [out=90,in=-90] (.25,.75);	
	\draw[glabel] (.75,-.5) to [out=90,in=-90] (.25,.75);
\end{tikzpicture}
};
\endxy
\end{equation}
For example, a computation shows that
\[
\xy
(0,0)*{
\begin{tikzpicture}[scale=.75]
	\draw[1label] (.5,-1) to [out=90,in=210] (.25,.25);
	\draw[1label] (1,-1) to [out=90,in=-30] (.25,.25);
	\draw[2label] (.25,.25) to [out=90,in=-90] (.25,.75);
	\draw[overcross] (0,-1) to [out=90,in=-90] (1,.75);
	\draw[1label] (0,-1) to [out=90,in=-90] (1,.75);
\end{tikzpicture}
};
\endxy
=
\frac{1}{[2]}
\xy
(0,0)*{
\begin{tikzpicture}[scale=.75]
	\draw[1label] (0,-1) to [out=90,in=210] (.5,-.25);
	\draw[1label] (.5,-1) to [out=90,in=210] (.75,-.625);
	\draw[1label] (1,-1) to [out=90,in=-30] (.75,-.625);
	\draw[2label] (.75,-.625) to [out=90,in=330] (.5,-.25);
	\draw[3label] (.5,-.25) to (.5,.25);
	\draw[1label] (.5,.25) to [out=30,in=270](.875,.75);
	\draw[2label] (.5,.25) to [out=150,in=270] (.125,.75);
\end{tikzpicture}
};
\endxy
-\frac{q}{[3]}
\xy
(0,0)*{
\begin{tikzpicture}[scale=.75]
	\draw[1label] (0,-1) to [out=90,in=210] (.25,.25);
	\draw[1label] (.5,-1) to [out=90,in=210] (.75,-.5);
	\draw[1label] (1,-1) to [out=90,in=-30] (.75,-.5);
	\draw[2label] (.75,-.5) to (.75,0);
	\draw[1label] (.75,0) to [out=150,in=-30] (.25,.25);
	\draw[1label] (.75,0) to [out=30,in=-90] (1,.75);
	\draw[2label] (.25,.25) to (.25,.75);
\end{tikzpicture}
};
\endxy
-\frac{q^{-2}}{[2][3]}
\xy
(0,0)*{
\begin{tikzpicture}[scale=.75]
	\draw[1label] (0,-1) to [out=90,in=210] (.5,-.25);
	\draw[1label] (.5,-1) to [out=90,in=210] (.75,-.625);
	\draw[1label] (1,-1) to [out=90,in=-30] (.75,-.625);
	\draw[2label] (.75,-.625) to [out=90,in=330] (.5,-.25);
	\draw[1label] (.5,-.25) to (.5,.25);
	\draw[1label] (.5,.25) to [out=30,in=270](.875,.75);
	\draw[2label] (.5,.25) to [out=150,in=270] (.125,.75);
\end{tikzpicture}
};
\endxy
\]
which gives the first relation in \eqref{eq:vertexslide} in this case 
by composing with 
$
\xy
(0,0)*{
\begin{tikzpicture}[scale =.35, smallnodes]
	\draw[1label] (-1,0) to (-1,2.25);
	\draw[1label] (0,0) to [out=90,in=210] (.5,.75);
	\draw[1label] (1,0) to [out=90,in=330] (.5,.75);
	\draw[2label] (.5,.75) to (.5,1.5);
	\draw[1label] (.5,1.5) to [out=150,in=270] (0,2.25);
	\draw[1label] (.5,1.5) to [out=30,in=270] (1,2.25);
\end{tikzpicture}
};
\endxy
$.

The braid relations then follow from the $1$-labeled case, 
i.e. from the relations
\begin{equation}\label{eq:braidrels}
\xy
(0,0)*{
\begin{tikzpicture} [scale=.5]
	\draw[1label] (1,0) to [out=90,in=270] (0,1.5) to [out=90,in=270] (1,3);
	\draw[overcross] (0,0) to [out=90,in=270] (1,1.5) to [out=90,in=270] (0,3);
	\draw[1label] (0,0) to [out=90,in=270] (1,1.5) to [out=90,in=270] (0,3);
\end{tikzpicture}
};
\endxy
=
\xy
(0,0)*{
\begin{tikzpicture} [scale=.5]
	\draw[1label] (1,0) to (1,3);
	\draw[1label] (0,0) to (0,3);
\end{tikzpicture}
};
\endxy
\quad \text{and} \quad
\xy
(0,0)*{
\begin{tikzpicture}[scale=.55]
	\draw[1label] (.8,-1) to [out=90,in=270] (0,.5);
	\draw[1label] (1.6,-1) to [out=90,in=270] (0,2);
	\draw[overcross] (0,.5) to [out=90,in=270] (.8,2);
	\draw[1label] (0,.5) to [out=90,in=270] (.8,2);
	\draw[overcross] (0,-1) to [out=90,in=270] (1.6,2);
	\draw[1label] (0,-1) to [out=90,in=270] (1.6,2);
\end{tikzpicture}
};
\endxy
=
\xy
(0,0)*{
\begin{tikzpicture}[scale=.55]
	\draw[1label] (1.6,-1) to [out=90,in=270] (0,2);
	\draw[1label] (1.6,.5) to [out=90,in=270] (.8,2);
	\draw[overcross] (.8,-1) to [out=90,in=270] (1.6,.5);
	\draw[1label] (.8,-1) to [out=90,in=270] (1.6,.5);
	\draw[overcross] (0,-1) to [out=90,in=270] (1.6,2);
	\draw[1label] (0,-1) to [out=90,in=270] (1.6,2);
\end{tikzpicture}
};
\endxy
\end{equation}
The first (Reidemeister II) of these relations holds since 
the coefficients in \eqref{eq:braiding} satisfy \eqref{eq:R2}, 
while the second (Reidemeister III) is given as follows:
\[
\begin{aligned}
\xy
(0,0)*{
\begin{tikzpicture}[scale=.55]
	\draw[1label] (.8,-1) to [out=90,in=270] (0,.5);
	\draw[1label] (1.6,-1) to [out=90,in=270] (0,2);
	\draw[overcross] (0,.5) to [out=90,in=270] (.8,2);
	\draw[1label] (0,.5) to [out=90,in=270] (.8,2);
	\draw[overcross] (0,-1) to [out=90,in=270] (1.6,2);
	\draw[1label] (0,-1) to [out=90,in=270] (1.6,2);
\end{tikzpicture}
};
\endxy
&=q
\xy
(0,0)*{
\begin{tikzpicture}[scale=.55]
	\draw[1label] (.8,-1) to [out=90,in=270] (0,1.2);
	\draw[1label] (1.6,-1) to [out=90,in=270] (.8,1.2);
	\draw[overcross] (0,-1) to [out=90,in=270] (1.6,1.2);
	\draw[1label] (0,-1) to [out=90,in=270] (1.6,1.2);
	\draw[1label] (0,1.2) to (0,2);
	\draw[1label] (.8,1.2) to (.8,2);
	\draw[1label] (1.6,1.2) to (1.6,2);
\end{tikzpicture}
};
\endxy
+\frac{q^{-3}}{[3]}
\xy
(0,0)*{
\begin{tikzpicture}[scale=.55]
	\draw[1label] (.8,-1) to [out=90,in=270] (0,.4);
	\draw[1label] (1.6,-1) to [out=90,in=270] (.8,.4);
	\draw[overcross] (0,-1) to [out=90,in=270] (1.6,.8);
	\draw[1label] (0,-1) to [out=90,in=270] (1.6,.8);
	\draw[1label] (0,.4) to [out=90,in=180] (.4,.8);
	\draw[1label] (.8,.4) to [out=90,in=0] (.4,.8);
	\draw[1label] (.4,1.6) to [out=180,in=270] (0,2);
	\draw[1label] (.4,1.6) to [out=0,in=270] (.8,2);
	\draw[1label] (1.6,.8) to (1.6,2);
\end{tikzpicture}
};
\endxy
-\frac{1}{[3]}
\xy
(0,0)*{
\begin{tikzpicture}[scale=.55]
	\draw[1label] (.8,-1) to [out=90,in=270] (0,.4);
	\draw[1label] (1.6,-1) to [out=90,in=270] (.8,.4);
	\draw[overcross] (0,-1) to [out=90,in=270] (1.6,.8);
	\draw[1label] (0,-1) to [out=90,in=270] (1.6,.8);
	\draw[1label] (0,.4) to [out=90,in=210] (.4,.8);
	\draw[1label] (.8,.4) to [out=90,in=-30] (.4,.8);
	\draw[2label] (.4,.8) to (.4,1.6);
	\draw[1label] (.4,1.6) to [out=150,in=270] (0,2);
	\draw[1label] (.4,1.6) to [out=30,in=270] (.8,2);
	\draw[1label] (1.6,.8) to (1.6,2);
\end{tikzpicture}
};
\endxy\\
&=q
\xy
(0,0)*{
\begin{tikzpicture}[scale=.55,rotate=180]
	\draw[1label] (.8,-1) to [out=90,in=270] (0,1.2);
	\draw[1label] (1.6,-1) to [out=90,in=270] (.8,1.2);
	\draw[overcross] (0,-1) to [out=90,in=270] (1.6,1.2);
	\draw[1label] (0,-1) to [out=90,in=270] (1.6,1.2);
	\draw[1label] (0,1.2) to (0,2);
	\draw[1label] (.8,1.2) to (.8,2);
	\draw[1label] (1.6,1.2) to (1.6,2);
\end{tikzpicture}
};
\endxy
+\frac{q^{-3}}{[3]}
\xy
(0,0)*{
\begin{tikzpicture}[scale=.55,rotate=180]
	\draw[1label] (.8,-1) to [out=90,in=270] (0,.4);
	\draw[1label] (1.6,-1) to [out=90,in=270] (.8,.4);
	\draw[overcross] (0,-1) to [out=90,in=270] (1.6,.8);
	\draw[1label] (0,-1) to [out=90,in=270] (1.6,.8);
	\draw[1label] (0,.4) to [out=90,in=180] (.4,.8);
	\draw[1label] (.8,.4) to [out=90,in=0] (.4,.8);
	\draw[1label] (.4,1.6) to [out=180,in=270] (0,2);
	\draw[1label] (.4,1.6) to [out=0,in=270] (.8,2);
	\draw[1label] (1.6,.8) to (1.6,2);
\end{tikzpicture}
};
\endxy
-\frac{1}{[3]}
\xy
(0,0)*{
\begin{tikzpicture}[scale=.55,rotate=180]
	\draw[1label] (.8,-1) to [out=90,in=270] (0,.4);
	\draw[1label] (1.6,-1) to [out=90,in=270] (.8,.4);
	\draw[overcross] (0,-1) to [out=90,in=270] (1.6,.8);
	\draw[1label] (0,-1) to [out=90,in=270] (1.6,.8);
	\draw[1label] (0,.4) to [out=90,in=210] (.4,.8);
	\draw[1label] (.8,.4) to [out=90,in=-30] (.4,.8);
	\draw[2label] (.4,.8) to (.4,1.6);
	\draw[1label] (.4,1.6) to [out=150,in=270] (0,2);
	\draw[1label] (.4,1.6) to [out=30,in=270] (.8,2);
	\draw[1label] (1.6,.8) to (1.6,2);
\end{tikzpicture}
};
\endxy
=
\xy
(0,0)*{
\begin{tikzpicture}[scale=.55]
	\draw[1label] (1.6,-1) to [out=90,in=270] (0,2);
	\draw[1label] (1.6,.5) to [out=90,in=270] (.8,2);
	\draw[overcross] (.8,-1) to [out=90,in=270] (1.6,.5);
	\draw[1label] (.8,-1) to [out=90,in=270] (1.6,.5);
	\draw[overcross] (0,-1) to [out=90,in=270] (1.6,2);
	\draw[1label] (0,-1) to [out=90,in=270] (1.6,2);
\end{tikzpicture}
};
\endxy
\end{aligned}
\]

Finally, a computation shows that
\begin{equation}\label{eq:ribbon}
\xy
(0,0)*{
\begin{tikzpicture}[scale=.5,yscale=-1]
	\draw[1label] (.8,-.4) to [out=180,in=270] (0,1);
	\draw[overcross] (0,-1) to [out=90,in=180] (.8,.4);
	\draw[1label] (0,-1) to [out=90,in=180] (.8,.4);
	\draw[1label] (.8,.4) to [out=0,in=90] (1.1,0) to [out=270,in=0] (.8,-.4);
\end{tikzpicture}
};
\endxy = 
-q^{-7} \;
\xy
(0,0)*{
\begin{tikzpicture}[scale=.5,yscale=-1]
	\draw[1label] (0,-1) to (0,1);
\end{tikzpicture}
};
\endxy
\quad \text{and} \quad
\xy
(0,0)*{
\begin{tikzpicture}[scale=.5]
	\draw[1label] (.8,-.4) to [out=180,in=270] (0,1);
	\draw[overcross] (0,-1) to [out=90,in=180] (.8,.4);
	\draw[1label] (0,-1) to [out=90,in=180] (.8,.4);
	\draw[1label] (.8,.4) to [out=0,in=90] (1.1,0) to [out=270,in=0] (.8,-.4);
\end{tikzpicture}
};
\endxy =
-q^{7} \;
\xy
(0,0)*{
\begin{tikzpicture}[scale=.5,yscale=-1]
	\draw[1label] (0,-1) to (0,1);
\end{tikzpicture}
};
\endxy
\end{equation}
hence
\[
\xy
(0,0)*{
\begin{tikzpicture}[scale=.5]
	\draw[1label] (.8,-.4) to [out=180,in=270] (0,1);
	\draw[overcross] (0,-1) to [out=90,in=180] (.8,.4);
	\draw[1label] (0,-1) to [out=90,in=180] (.8,.4);
	\draw[1label] (.8,.4) to [out=0,in=90] (1.1,0) to [out=270,in=0] (.8,-.4);
	\draw[1label] (0,1) to [out=90,in=180] (.8,2.4);
	\draw[1label] (.8,2.4) to [out=0,in=90] (1.1,2) to [out=270,in=0] (.8,1.6);
	\draw[overcross] (.8,1.6) to [out=180,in=270] (0,3);
	\draw[1label] (.8,1.6) to [out=180,in=270] (0,3);
\end{tikzpicture}
};
\endxy
= \;
\xy
(0,0)*{
\begin{tikzpicture}[scale=.5]
	\draw[1label] (0,-.5) to (0,2.5);
\end{tikzpicture}
};
\endxy
\]
so $\Web(\spn[6])$ is ribbon.
\end{proof}

Further, it's clear from the proof of Theorem \ref{thm:Psi} 
that the functor $\Psi$ defined therein is ribbon.
Thus, Theorem \ref{thmFull} will follow once we have shown that 
$\Psi$ is full. 

To aid in this task, we first establish the relation between 
$\Web(\spn[6])$ and the BMW algebra \cite{BW},\cite{Murakami}. 
We now recall its definition, 
following the conventions from \cite{Hu}.

\begin{defn}
The $k$-strand Birman-Murakami-Wenzl (BMW) algebra 
$\BMW_k(r,z)$ is the unital associative $\C(q)$-algebra generated by 
$e_i,g_i,g_i^{-1}$ for $1\leq i \leq k-1$, with relations:
\begin{enumerate}
	\item $g_i-g_i^{-1}=z(1-e_i)$
	\item $e_i^2=\left(1+\frac{r-r^{-1}}{z}\right)e_i$
	\item $g_ig_{i+1}g_i=g_{i+1}g_ig_{i+1}$ for $1 \leq i \leq k-2$
	\item $g_ig_j=g_jg_i$ for $|i-j|>1$
	\item $e_ie_{i+1}e_i=e_i$ and $e_{i+1}e_ie_{i+1}=e_{i+1}$ for $1 \leq i \leq k-2$
	\item $g_ig_{i+1}e_i=e_{i+1}e_i$ and $g_{i+1}g_ie_{i+1}=e_ie_{i+1}$
		for $1 \leq i \leq k-2$
	\item $e_ig_i=g_ie_i=r^{-1} e_i$
	\item $e_ig_{i+1}e_i=re_i$ and $e_{i+1}g_ie_{i+1}=re_{i+1}$
		for $1 \leq i \leq k-2$.
\end{enumerate}
\end{defn}
It is known that the BMW algebra is ``quantum Schur-Weyl'' dual
to $U_q(\spn)$, see \cite{Hu} and references therein.
In particular, there is a surjective homomorphism 
\begin{equation}\label{eq:BMWmap}
\BMW_k(-q^{7},q-q^{-1}) \to \End_{\spn[6]}(V^{\otimes k}).
\end{equation}
We show a partial analogue for $\Web(\spn[6])$.

\begin{prop}
There is a homomorphism 
$BWM(-q^{7},q-q^{-1}) \to \End_{\Web(\spn[6])}(1^{\otimes k})$
and \eqref{eq:BMWmap} factors through this.
\end{prop}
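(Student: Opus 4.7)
The plan is to define the homomorphism on generators by
\[
g_i \mapsto \mathrm{id}_1^{\otimes(i-1)} \otimes \beta_{1,1} \otimes \mathrm{id}_1^{\otimes(k-i-1)}, \qquad
e_i \mapsto \mathrm{id}_1^{\otimes(i-1)} \otimes E \otimes \mathrm{id}_1^{\otimes(k-i-1)},
\]
where $\beta_{1,1}$ is the crossing from \eqref{eq:braiding} and $E : 1\otimes 1 \to 1\otimes 1$ is the cap-cup endomorphism factoring through $\varnothing$. The image of $g_i^{-1}$ is then forced to be $\beta_{1,1}^{-1}$. To show this extends to an algebra homomorphism, I would verify each of the eight defining relations of $\BMW_k(-q^7, q-q^{-1})$, most of which are immediate from the ribbon structure on $\Web(\spn[6])$ already established.

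The quadratic relations are the most substantive input. Relation (1), $g_i - g_i^{-1} = z(1 - e_i)$, follows by subtracting the two expansions in \eqref{eq:braiding} and using $(q^3 - q^{-3})/[3] = q - q^{-1}$, which eliminates the trivalent summand. Relation (2), $e_i^2 = (1 + (r - r^{-1})/z)\, e_i$, reduces to the evaluation of the $1$-labeled circle inside a cap-cup sandwich; the resulting scalar $-[3][8]/[4]$ matches $1 - [7]$, which equals $1 + (r - r^{-1})/z$ at $r = -q^7$, $z = q - q^{-1}$. The braid relations (3) and (4) were already established when proving that $\Web(\spn[6])$ is ribbon (cf.\ \eqref{eq:braidrels} and the interchange law). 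Relation (5), $e_i e_{i\pm 1} e_i = e_i$, is the standard pivotal zig-zag. Relations (7) and (8) are consequences of the ribbon equation \eqref{eq:ribbon}: capping one side of $\beta_{1,1}^{\pm 1}$ produces a positive or negative curl that evaluates to $-q^{\mp 7}$ times the cap, giving $e_i g_i = g_i e_i = -q^{-7} e_i = r^{-1} e_i$ and $e_i g_{i+1} e_i = e_{i+1} g_i e_{i+1} = -q^7 e_i = r e_i$. Relation (6), $g_i g_{i+1} e_i = e_{i+1} e_i$, follows from the naturality of the braiding with respect to the cup (as in the standard Kauffman tangle derivation) combined with a Reidemeister II simplification.

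For the factorization statement, recall that Theorem \ref{thm:Psi} identifies $\Psi$ as a ribbon functor. Consequently $\Psi(\beta_{1,1})$ is the braiding $\beta_{V,V}$ on $V \otimes V$, and $\Psi(E)$ is the composition $V\otimes V \twoheadrightarrow \C(q) \hookrightarrow V\otimes V$ through the trivial summand, normalized so that the closure recovers $\dim_q(V) = -[3][8]/[4]$. These are precisely the images of $g_i$ and $e_i$ under the standard surjection \eqref{eq:BMWmap}; hence the composition $\BMW_k \to \End_{\Web(\spn[6])}(1^{\otimes k}) \xrightarrow{\Psi} \End_{\spn[6]}(V^{\otimes k})$ agrees with \eqref{eq:BMWmap} on generators, and therefore on the entire algebra. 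The main obstacle is not conceptual but careful diagrammatic bookkeeping in the mixed relations (6)--(8), where one must track the signs and powers of $q$ produced by the twist; no new ingredient beyond the identities already assembled in this section is required.
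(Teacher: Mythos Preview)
Your proposal is correct and follows essentially the same approach as the paper: define the map via the crossing and cap-cup, then verify the eight BMW relations using the braiding formula \eqref{eq:braiding} for (1), the circle value for (2), the ribbon/braid relations \eqref{eq:braidrels} for (3)--(4), pivotal isotopy for (5)--(6), and the curl identities \eqref{eq:ribbon} for (7)--(8), with the factorization immediate from $\Psi$ being ribbon. The only discrepancy is a harmless typo in your treatment of (8), where $e_{i+1} g_i e_{i+1}$ should equal $r\, e_{i+1}$ rather than $r\, e_i$.
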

\begin{proof}
The homomorphism in \eqref{eq:BMWmap} is defined using the braiding and 
(co)unit morphisms in $\Rep(U_q(\spn[6]))$, 
thus it suffices to show that analogous formulae determine a homomorphism 
to $ \End_{\Web(\spn[6])}(1^{\otimes k})$, i.e. that
\[
g_i\mapsto
\xy
(0,0)*{
\begin{tikzpicture}[scale=.75]
	\draw[1label] (-1,0) to (-1,1.5);
	\node at (-.625,.75) {$\dots$};
	\draw[1label] (-.25,0) to (-.25,1.5);
	\draw[1label] (.75,0) to [out=90,in=-90] (0,1.5);
	\draw[overcross] (0,0) to [out=90,in=-90] (.75,1.5);
	\draw[1label] (0,0) to [out=90,in=-90] (.75,1.5);
	\draw[1label] (1,0) to (1,1.5);
	\node at (1.375,.75) {$\dots$};
	\draw[1label] (1.75,0) to (1.75,1.5);
\end{tikzpicture}
};
\endxy
\;\; , \;\;
g_i^{-1}\mapsto
\xy
(0,0)*{
\begin{tikzpicture}[scale=.75,xscale=-1]
    \draw[1label] (-1,0) to (-1,1.5);
    \node at (-.625,.75) {$\dots$};
    \draw[1label] (-.25,0) to (-.25,1.5);
	\draw[1label] (.75,0) to [out=90,in=-90] (0,1.5);
	\draw[overcross] (0,0) to [out=90,in=-90] (.75,1.5);
	\draw[1label] (0,0) to [out=90,in=-90] (.75,1.5);
	\draw[1label] (1,0) to (1,1.5);
	\node at (1.375,.75) {$\dots$};
	\draw[1label] (1.75,0) to (1.75,1.5);
\end{tikzpicture}
};
\endxy
\;\; , \;\;
e_i\mapsto
\xy
(0,0)*{
\begin{tikzpicture}[scale=.75]
	\draw[1label] (-1,0) to (-1,1.5);
	\node at (-.625,.75) {$\dots$};
	\draw[1label] (-.25,0) to (-.25,1.5);
	\draw[1label] (0,0) to [out=90,in=180] (.375,.5);
	\draw[1label] (.375,.5) to [out=0,in=90] (.75,0);
	\draw[1label] (0,1.5) to [out=-90,in=180] (.375,1);
	\draw[1label] (.375,1) to [out=0,in=-90] (.75,1.5);
	\draw[1label] (1,0) to (1,1.5);
	\node at (1.375,.75) {$\dots$};
	\draw[1label] (1.75,0) to (1.75,1.5);
\end{tikzpicture}
};
\endxy
\]
gives a homomorphism. 
(Here, the crossings and caps/cups 
occur between the $i^{th}$ and $(i+1)^{st}$ positions.)
A direct computation shows that this is indeed the case; 
we now summarize the computation.
\begin{enumerate}
\item Equation \eqref{eq:braiding} implies that 
\[
\xy
(0,0)*{
\begin{tikzpicture} [scale=.5]
	\draw[1label] (1,0) to [out=90,in=270] (0,1.5);
	\draw[overcross] (0,0) to [out=90,in=270] (1,1.5);
	\draw[1label] (0,0) to [out=90,in=270] (1,1.5);
\end{tikzpicture}
};
\endxy 
-
\xy
(0,0)*{
\begin{tikzpicture} [scale=.5]
	\draw[1label] (0,0) to [out=90,in=270] (1,1.5);
	\draw[overcross] (1,0) to [out=90,in=270] (0,1.5);
	\draw[1label] (1,0) to [out=90,in=270] (0,1.5);
\end{tikzpicture}
};
\endxy
=
(q - q^{-1}) \;
\xy
(0,0)*{
\begin{tikzpicture}[scale =.4, smallnodes]
	\draw[1label] (0,0) to (0,2.25);
	\draw[1label] (1,0) to (1,2.25);
\end{tikzpicture}
};
\endxy
+
\frac{q^{-3} - q^3}{[3]} \;
\xy
(0,0)*{
\begin{tikzpicture}[scale =.4, smallnodes]
	\draw[1label] (0,0) to [out=90,in=180] (.5,.75);
	\draw[1label] (1,0) to [out=90,in=0] (.5,.75);
	\draw[1label] (.5,1.5) to [out=180,in=270] (0,2.25);
	\draw[1label] (.5,1.5) to [out=0,in=270] (1,2.25);
\end{tikzpicture}
};
\endxy
=
(q-q^{-1}) 
\left( \;
\xy
(0,0)*{
\begin{tikzpicture}[scale =.4, smallnodes]
	\draw[1label] (0,0) to (0,2.25);
	\draw[1label] (1,0) to (1,2.25);
\end{tikzpicture}
};
\endxy
-
\xy
(0,0)*{
\begin{tikzpicture}[scale =.4, smallnodes]
	\draw[1label] (0,0) to [out=90,in=180] (.5,.75);
	\draw[1label] (1,0) to [out=90,in=0] (.5,.75);
	\draw[1label] (.5,1.5) to [out=180,in=270] (0,2.25);
	\draw[1label] (.5,1.5) to [out=0,in=270] (1,2.25);
\end{tikzpicture}
};
\endxy
\; \right).
\]

\item Since $1+ \frac{r-r^{-1}}{z} = 1-[7] = -\frac{[3][8]}{[4]}$, this relation follows by
observing that
$
-\frac{[3][8]}{[4]}
\xy
(0,0)*{
\begin{tikzpicture}[scale =.3, smallnodes]
	\draw[1label] (0,0) to [out=90,in=180] (.5,.75);
	\draw[1label] (1,0) to [out=90,in=0] (.5,.75);
	\draw[1label] (.5,1.5) to [out=180,in=270] (0,2.25);
	\draw[1label] (.5,1.5) to [out=0,in=270] (1,2.25);
\end{tikzpicture}
};
\endxy
$
is an idempotent in $\Web(\spn[6])$.

\item This follows from the Reidemeister III move in \eqref{eq:braidrels}.

\item This holds by a height exchange isotopy in $\Web(\spn[6])$, 
i.e. because this category is monoidal.

\item This holds via (planar) isotopy in $\Web(\spn[6])$, 
i.e. because this category is pivotal.

\item This follows from the Reidemeister II move in \eqref{eq:braidrels}, 
and planar isotopy.

\item This, and relation (8), hold by \eqref{eq:ribbon}.

\end{enumerate}
\end{proof}

We now complete the proof of Theorem \ref{thmFull}, 
by showing the following.

\begin{thm}\label{thm:Full}
The functor $\Psi: \Web(\spn[6]) \to \FRep(U_q(\spn[6]))$ is full.
\end{thm}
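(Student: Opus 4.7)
The plan is to prove fullness in two steps: first show that $\Psi$ is surjective on hom-spaces of the form $\Hom_{\Web(\spn[6])}(1^{\otimes n}, 1^{\otimes m})$ using the BMW connection from Theorem \ref{thmBMW}, then reduce the general case to this one by means of merge and split morphisms built from the trivalent vertices, which exhibit every object of $\Web(\spn[6])$ as a retract of a tensor power of $1$.

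For the first step, observe that the homomorphism $\BMW_k(-q^{7}, q-q^{-1}) \to \End_{\Web(\spn[6])}(1^{\otimes k})$ from Theorem \ref{thmBMW}, when composed with $\Psi$, recovers the standard quantum Schur-Weyl map $\BMW_k(-q^{7}, q-q^{-1}) \twoheadrightarrow \End_{\spn[6]}(V^{\otimes k})$, whose surjectivity is known (see \cite{Hu}). Hence $\Psi$ is surjective on $\End_{\Web(\spn[6])}(1^{\otimes k}) \to \End_{\spn[6]}(V^{\otimes k})$ for every $k$. To pass to non-endomorphism hom-spaces, we use the pivotal structure on both categories: bending strands via cups and caps yields a compatible identification $\Hom_{\spn[6]}(V^{\otimes n}, V^{\otimes m}) \cong \End_{\spn[6]}(V^{\otimes (n+m)/2})$ when $n+m$ is even (the case $n+m$ odd being trivial, as the target hom-space vanishes by the parity observation following \eqref{eq:TensorDecomp}), and since $\Psi$ is a pivotal functor this identification commutes with $\Psi$. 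Surjectivity on endomorphism algebras therefore yields surjectivity of $\Psi$ on $\Hom_{\Web(\spn[6])}(1^{\otimes n}, 1^{\otimes m})$ for all $n, m$.

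For the second step, for any object $\vec{k} = (k_1, \ldots, k_r)$ of $\Web(\spn[6])$, set $n_{\vec{k}} := k_1 + \cdots + k_r$ and define morphisms $\pi_{\vec{k}} : 1^{\otimes n_{\vec{k}}} \to \vec{k}$ and $\iota_{\vec{k}} : \vec{k} \to 1^{\otimes n_{\vec{k}}}$ by tensoring together iterated trivalent vertices: a $2$-labeled strand is produced by fusing two $1$'s via the generator $1 \otimes 1 \to 2$, a $3$-labeled strand by further fusing with a third $1$ via $1 \otimes 2 \to 3$, and $\iota_{\vec{k}}$ is the mirror of $\pi_{\vec{k}}$ obtained from the pivotal structure. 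The bubble relations in \eqref{eq:spRel} immediately give $\pi_{\vec{k}} \circ \iota_{\vec{k}} = c_{\vec{k}} \cdot \id_{\vec{k}}$ for a nonzero scalar $c_{\vec{k}} \in \C(q)$ (explicitly a product of factors of $[2][3]$ indexed by the labels in $\vec{k}$). Given $f \in \Hom_{\spn[6]}(\Psi(\vec{k}), \Psi(\vec{\ell}))$, form $\tilde{f} := \Psi(\iota_{\vec{\ell}}) \circ f \circ \Psi(\pi_{\vec{k}})$; Step 1 yields $\tilde{f} = \Psi(g)$ for some $g$ in $\Web(\spn[6])$, and then $h := (c_{\vec{k}} c_{\vec{\ell}})^{-1} \pi_{\vec{\ell}} \circ g \circ \iota_{\vec{k}}$ satisfies $\Psi(h) = f$ by applying the bubble computation at the two ends.

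The essential non-trivial ingredient is Theorem \ref{thmBMW} together with the classical Schur-Weyl surjection onto $\End_{\spn[6]}(V^{\otimes k})$. Beyond this, the only point requiring careful verification is the bubble computation $\pi_{\vec{k}} \circ \iota_{\vec{k}} = c_{\vec{k}} \cdot \id_{\vec{k}}$, which follows at once from iterating the relations in \eqref{eq:spRel}; we anticipate no serious technical obstacle in executing this plan.
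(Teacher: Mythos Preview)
Your proposal is correct and follows essentially the same approach as the paper: both use the BMW surjection onto $\End_{\spn[6]}(V^{\otimes k})$ together with pivotality to handle all $\Hom$-spaces between tensor powers of $1$, and then reduce arbitrary objects to such tensor powers via the trivalent merge/split morphisms and the bubble relations in \eqref{eq:spRel}. The only cosmetic difference is that the paper packages the final step as a commutative-square diagram chase (showing the analogous pre/post-composition map is surjective on the representation side), whereas you make the retraction $\pi_{\vec{k}} \circ \iota_{\vec{k}} = c_{\vec{k}}\,\id_{\vec{k}}$ explicit and invert the scalar directly; these are the same argument.
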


\begin{proof}
As noted above, 
the homomorphism $\BMW_k(-q^7,q-q^{-1}) \to \End_{\spn[6]}(V^{\otimes k})$ 
is known to be surjective. 
Since this factors through 
$\Psi:  \End_{\Web(\spn[6])}(1^{\otimes k}) \to  \End_{\spn[6]}(V^{\otimes k})$, 
this latter homomorphism is surjective as well.
Further, the unit/counit morphisms in $\Web(\spn[6])$ and $\FRep(U_q(\spn[6]))$ 
give the following commutative diagram
\[
\begin{tikzcd}
\Hom_{\Web(\spn[6])}(1^{\otimes r}, 1^{\otimes s}) \arrow[r, "\Psi"] \arrow[d, "\cong"]
& \Hom_{\spn[6]}(V^{\otimes r}, V^{\otimes s}) \arrow[d, "\cong"] \\ 
\End_{\Web(\spn[6])}(1^{\otimes \frac{r+s}{2}}) \arrow[r,"\Psi"]
& \End_{\spn[6]}(V^{\otimes \frac{r+s}{2}}) 
\end{tikzcd}
\]
for any $r,s \geq 0$ with $r+s$ even, which gives surjectivity for these $\Hom$-spaces.
Also, note that if $r+s$ is odd, then
\[
\Hom_{\Web(\spn[6])}(1^{\otimes r}, 1^{\otimes s}) = 0 
\]
since the parity of the sum of the entires in the domain and codomain are equal 
for all morphisms in $\Web(\spn[6])$.
Since $\Hom_{\spn[6]}(V^{\otimes r}, V^{\otimes s}) = 0$ for $r+s$ odd as well, 
we've thus shown that 
\[
\Psi: \Hom_{\Web(\spn[6])}(1^{\otimes r}, 1^{\otimes s}) \to \Hom_{\spn[6]}(V^{\otimes r}, V^{\otimes s})
\]
is surjective for all $r,s \geq 0$.

It remains to extend this to the remaining $\Hom$-spaces in $\Web(\spn[6])$. 
Let $\vec{k} = (k_1,\ldots,k_m)$ and $ \vec{l} = (l_1,\ldots,l_n)$ be objects in $\Web(\spn[6])$. 
Consider the webs
\[
\mathcal{W}_b = \bigotimes_{i=1}^m \mathcal{W}_{b,i} : (k_1,\ldots,k_m) \to 1^{\otimes \sum k_i}
\quad
\text{and} 
\quad
\mathcal{W}_t = \bigotimes_{j=1}^n \mathcal{W}_{t,j} : 1^{\otimes \sum l_j} \to (l_1,\ldots,l_n)
\]
defined by
\[
\mathcal{W}_{b,i} = 
\begin{cases}
\xy
(0,0)*{
\begin{tikzpicture}[scale =.4, smallnodes]
	\node at (-.75,0){};
	\draw[1label] (0,0) to (0,1.5);
\end{tikzpicture}
};
\endxy
& \text{if } k_i =1 \\
\xy
(0,0)*{
\begin{tikzpicture}[scale =.5, yscale=-1]
	\node at (-.125,0){};
	\draw[1label] (0,0) to [out=90,in=210] (.5,.75);
	\draw[1label] (1,0) to [out=90,in=330] (.5,.75);
	\draw[2label] (.5,.75) to (.5,1.5) ;
\end{tikzpicture}
};
\endxy
& \text{if } k_i =2 \\
\xy 
(0,0)*{
\begin{tikzpicture}[scale=.2, xscale=-1, yscale=-1]
	\draw [1label] (-1,-1) to [out=90,in=210] (0,.75);
	\draw [1label] (1,-1) to [out=90,in=330] (0,.75);
	\draw [1label] (3,-1) to [out=90,in=330] (1,2.5);
	\draw [2label] (0,.75) to [out=90,in=210] (1,2.5);
	\draw [3label] (1,2.5) to (1,4.25);
	\node at (1,-1.125){};
\end{tikzpicture}
};
\endxy 
& \text{if } k_i = 3
\end{cases}
\quad \text{and} \quad
\mathcal{W}_{t,j} = 
\begin{cases}
\xy
(0,0)*{
\begin{tikzpicture}[scale =.4, smallnodes]
	\node at (-.75,0){};
	\draw[1label] (0,0) to (0,1.5);
\end{tikzpicture}
};
\endxy
& \text{if } l_j =1 \\
\xy
(0,0)*{
\begin{tikzpicture}[scale =.5, smallnodes]
	\node at (-.125,0){};
	\draw[1label] (0,0) to [out=90,in=210] (.5,.75);
	\draw[1label] (1,0) to [out=90,in=330] (.5,.75);
	\draw[2label] (.5,.75) to (.5,1.5) ;
\end{tikzpicture}
};
\endxy
& \text{if } l_j =2 \\
\xy 
(0,0)*{
\begin{tikzpicture}[scale=.2, xscale=-1]
	\draw [1label] (-1,-1) to [out=90,in=210] (0,.75);
	\draw [1label] (1,-1) to [out=90,in=330] (0,.75);
	\draw [1label] (3,-1) to [out=90,in=330] (1,2.5);
	\draw [2label] (0,.75) to [out=90,in=210] (1,2.5);
	\draw [3label] (1,2.5) to (1,4.25);
\end{tikzpicture}
};
\endxy 
& \text{if } l_j = 3
\end{cases}
\]
We then obtain a $\C(q)$-linear map
\[
\Hom_{\Web(\spn[6])}(1^{\otimes \sum k_i} , 1^{\otimes \sum l_j})
\xrightarrow{\mathcal{W}_t \circ ( - ) \circ \mathcal{W}_b} 
\Hom_{\Web(\spn[6])}(\vec{k}, \vec{l})
\]
which is surjective by the relations in the first line of \eqref{eq:spRel}.
This in turn implies that
\[
\Hom_{\spn[6]}(V^{\otimes \sum k_i} , V^{\otimes \sum l_j})
\xrightarrow{\Psi(\mathcal{W}_t) \circ ( - ) \circ \Psi(\mathcal{W}_b)} 
\Hom_{\spn[6]}(\Psi(\vec{k}), \Psi(\vec{l})) 
\]
is surjective as well, and the result then follows from the diagram
\[
\begin{tikzcd}
\Hom_{\Web(\spn[6])}(1^{\otimes \sum k_i} , 1^{\otimes \sum l_j}) 
\arrow[r, "\Psi"] \arrow[d, "\mathcal{W}_t \circ ( - ) \circ \mathcal{W}_b"]
& \Hom_{\spn[6]}(V^{\otimes \sum k_i} , V^{\otimes \sum l_j})  
\arrow[d, "\Psi(\mathcal{W}_t) \circ ( - ) \circ \Psi(\mathcal{W}_b)"] \\ 
\Hom_{\Web(\spn[6])}(\vec{k}, \vec{l}) \arrow[r,"\Psi"]
& \Hom_{\spn[6]}(\Psi(\vec{k}), \Psi(\vec{l})) 
\end{tikzcd}
\]
\end{proof}

\section{Closed webs and ladders}\label{sec:Closed}

In this section, we prove Theorems \ref{thmEmpty} and \ref{thmTr}. 
Save for the second statement in Theorem \ref{thmEmpty}, 
these results can be interpreted topologically as saying that closed webs, 
living in the plane and annulus, respectively, can be expressed in terms of the simplest 
such webs (the empty web and nested essential circles, respectively). 
We begin by discussing our approach, and introducing the requisite structures.

\subsection{Strategy and $\Lad(\gspn[6])$}

In \cite{TubSar}, a web formalism is used to describe morphisms between 
representations of certain type $BCD$ coideal subalgebras of $U_q(\gln)$, 
and to prove quantum analogues of Howe dualities in these types.
However, their results suggest that Howe dualities cannot be used 
(at least in a straightforward way) to give descriptions of $\Rep(U_q(\mathfrak{g}))$ 
in types $BCD$.
Our main observation is that several salient structures used in the 
study of type $A$ webs can be 
exploited independent of their connection to Howe duality.
As such, we abandon Howe duality, and instead attempt to 
parallel these aspects of the type $A$ story.

The Cautis-Kamnitzer-Morrison approach to $\Web(\sln)$ proceeds by considering 
several auxiliary categories that are related to $\Rep(U_q(\sln))$.
Specifically, for fixed $n>0$, they consider the category 
$\Lad(\gln)$ of ``ladder-like'' $\gln$-webs, 
and show that a certain full subcategory $\Lad_m(\gln)$ is equivalent 
to the quotient of the idempotent form $\dot{U}_q(\gln[m])$ by the kernel of the 
map $\dot{U}_q(\gln[m]) \to \Rep(U_q(\gln))$ induced via quantum skew Howe duality. 
(Recall that the latter is the quantum analogue of the duality between $\gln[m]$ and $\gln$ 
induced by their commuting actions on $\largewedge^\bullet(\C^m \otimes \C^n)$.)
Their construction of $\Web(\sln)$ then follows by analyzing the diagram:
\[
\begin{tikzcd}
\Lad(\gln) \arrow[r] \arrow[rd] & \Web(\gln) \arrow[r] \arrow[d] & \Web(\sln) \arrow[d] \\
& \Rep(U_q(\gln)) \arrow[r] & \Rep(U_q(\sln))
\end{tikzcd}
\]
where $\Web(\gln)$ is a $\gln$ analogue of $\Web(\sln)$.
Most-importantly for our considerations, 
there exist explicit proofs of the type $A$ analogues of
Theorems \ref{thmEmpty} and \ref{thmTr} using properties of $\Lad(\gln)$ 
that can be formulated without reference to skew Howe duality; 
see \cite{GLL} and \cite{QR2,QRS}. 

This suggests that we should introduce type $C_3$ versions of 
$\Lad(\gln)$ and $\Web(\gln)$, 
denoted $\Lad(\gspn[6])$ and $\Web(\gspn[6])$, respectively.
These categories admit functors to $\Web(\spn[6])$ 
possessing certain fullness properties, 
but also have a rigidity\footnote{Rigid in the colloquial, non-category theoretic sense.} 
of structure that allows for the proof of Theorems \ref{thmEmpty} and \ref{thmTr}. 

\begin{rem}\label{rem:Web(gsp)}
The category $\Web(\gspn[6])$ 
is not strictly used in this work, 
i.e. all of our proofs bypass it, using only the relation between 
$\Lad(\gspn[6])$ and $\Web(\spn[6])$.
However, $\Web(\gspn[6])$ serves to motivate the definition of $\Lad(\gspn[6])$, 
which otherwise would seem much more ad hoc, so we briefly, 
and informally, discuss it now.

Paralleling the type $A$ case, the category $\Web(\gspn[6])$ is related to the category of 
representations of $\gspn[6]$.
The latter is the Lie algebra of the Lie group
\[
GSp(6) = \{ A \in M_6(\C) \mid A^{T} J A = \lambda_A J \}.
\]
where here $J \in M_n(\C)$ is non-degenerate and skew symmetric, 
and $\lambda_A \in \C$.
The tensor square of the vector representation no longer 
contains the trivial representation as a summand, 
but rather an irreducible $1$-dimensional representation 
corresponding to $A \mapsto \lambda_A$. 
We thus append a new generating objection $0'$ to the generators of 
$\Web(\spn[6])$, and let $\Web(\gspn[6])$ be generated by objects
$\{0',1,2,3\}$ and morphisms 
\[
\xy
(0,0)*{
\begin{tikzpicture}[scale =.5, smallnodes]
	\draw[1label] (0,0) node[below]{$1$} to [out=90,in=210] (.5,.75);
	\draw[1label] (1,0) node[below]{$1$} to [out=90,in=330] (.5,.75);
	\draw[0label] (.5,.75) to (.5,1.5) node[above]{$0'$};
\end{tikzpicture}
};
\endxy
\;\; , \;\;
\xy
(0,0)*{
\begin{tikzpicture}[scale =.5, smallnodes]
	\draw[1label] (0,0) node[below]{$1$} to [out=90,in=210] (.5,.75);
	\draw[1label] (1,0) node[below]{$1$} to [out=90,in=330] (.5,.75);
	\draw[2label] (.5,.75) to (.5,1.5) node[above]{$2$};
\end{tikzpicture}
};
\endxy
\;\; , \;\;
\xy
(0,0)*{
\begin{tikzpicture}[scale =.5, smallnodes]
	\draw[1label] (0,0) node[below]{$1$} to [out=90,in=210] (.5,.75);
	\draw[2label] (1,0) node[below]{$2$} to [out=90,in=330] (.5,.75);
	\draw[3label] (.5,.75) to (.5,1.5) node[above]{$3$};
\end{tikzpicture}
};
\endxy
\]
together with vertical and horizontal reflections thereof.
We then impose analogues of the $\Web(\spn[6])$ relations, e.g.
\begin{equation}\label{eq:gspRel}
\begin{gathered}
\xy
(0,0)*{
\begin{tikzpicture}[scale=.2]
	\draw [0label] (0,.75) to (0,2.5);
	\draw [1label] (0,-2.75) to [out=30,in=330] (0,.75);
	\draw [1label] (0,-2.75) to [out=150,in=210] (0,.75);
	\draw [0label] (0,-4.5) to (0,-2.75);
\end{tikzpicture}
};
\endxy
= -\frac{[3][8]}{[4]}\,
\xy
(0,0)*{
\begin{tikzpicture}[scale=.2]
	\draw [0label] (0,-4.5) to (0,2.5);
\end{tikzpicture}
};
\endxy
\;\; , \;\;
\xy
(0,0)*{
\begin{tikzpicture}[scale=.2]
	\draw [2label] (0,.75) to (0,2.5);
	\draw [1label] (0,-2.75) to [out=30,in=330] (0,.75);
	\draw [1label] (0,-2.75) to [out=150,in=210] (0,.75);
	\draw [0label] (0,-4.5) to (0,-2.75);
\end{tikzpicture}
};
\endxy
=0
\quad , \;\;
\xy
(0,0)*{
\begin{tikzpicture}[scale=.2]
	\draw [2label] (0,.75) to (0,2.5);
	\draw [1label] (0,-2.75) to [out=30,in=330] (0,.75);
	\draw [1label] (0,-2.75) to [out=150,in=210] (0,.75);
	\draw [2label] (0,-4.5) to (0,-2.75);
\end{tikzpicture}
};
\endxy
= [2][3]\,
\xy
(0,0)*{
\begin{tikzpicture}[scale=.2]
	\draw [2label] (0,-4.5) to (0,2.5);
\end{tikzpicture}
};
\endxy
\quad , \;\;
\xy
(0,0)*{
\begin{tikzpicture}[scale=.2]
	\draw [3label] (0,.75) to (0,2.5);
	\draw [2label] (0,-2.75) to [out=30,in=330] (0,.75);
	\draw [1label] (0,-2.75) to [out=150,in=210] (0,.75);
	\draw [3label] (0,-4.5) to (0,-2.75);
\end{tikzpicture}
};
\endxy
= [2][3]\,
\xy
(0,0)*{
\begin{tikzpicture}[scale=.2]
	\draw [3label] (0,-2.75) to (0,2.5);
\end{tikzpicture}
};
\endxy
\quad , \;\;
\xy
(0,0)*{
\begin{tikzpicture}[scale=.45]
	\draw[1label] (0,0) to [out=90,in=210] (.5,1);
	\draw[1label] (1,0) to [out=90,in=330] (.5,1);
	\draw[2label] (.5,1) to [out=90,in=210] (1,2);
	\draw[1label] (1.75,0) to [out=90,in=330] (1,2);
	\draw[3label] (1,2) to (1,3);
\end{tikzpicture}
};
\endxy
=
\xy
(0,0)*{
\begin{tikzpicture}[scale=.45,xscale=-1]
	\draw[1label] (0,0) to [out=90,in=210] (.5,1);
	\draw[1label] (1,0) to [out=90,in=330] (.5,1);
	\draw[2label] (.5,1) to [out=90,in=210] (1,2);
	\draw[1label] (1.75,0) to [out=90,in=330] (1,2);
	\draw[3label] (1,2) to (1,3);
\end{tikzpicture}
};
\endxy
\\
\xy
(0,0)*{
\begin{tikzpicture}[scale=.75]
	\draw[3label] (0,-1) to (0,-.5);
	\draw[2label] (0,-.5) to [out=150,in=210] (0,.5);
	\draw[1label] (0,-.5) to (.5,-.25);
	\draw[1label] (.75,-1) to [out=90,in=330] (.5,-.25);
	\draw[2label] (.5,-.25) to  (.5,.25);
	\draw[1label] (.5,.25) to (0,.5);
	\draw[1label] (.5,.25) to [out=30,in=270] (.75,1);
	\draw[3label] (0,.5) to (0,1);
\end{tikzpicture}
};
\endxy
=[2]^2 [3] \,
\xy
(0,0)*{
\begin{tikzpicture}[scale=.75]
	\draw[3label] (0,-1) to (0,1);
	\draw[1label] (.75,-1) to (.75,1);
\end{tikzpicture}
};
\endxy
- [2] \,
\xy
(0,0)*{
\begin{tikzpicture}[scale=.75]
	\draw[3label] (0,-1) to (0,-.5);
	\draw[2label] (0,-.5) to [out=150,in=210] (0,.5);
	\draw[1label] (0,-.5) to (.5,-.25);
	\draw[1label] (.75,-1) to [out=90,in=330] (.5,-.25);
	\draw[0label] (.5,-.25) to  (.5,.25);
	\draw[1label] (.5,.25) to (0,.5);
	\draw[1label] (.5,.25) to [out=30,in=270] (.75,1);
	\draw[3label] (0,.5) to (0,1);
\end{tikzpicture}
};
\endxy
\quad , \;\;
\xy
(0,0)*{
\begin{tikzpicture}[scale=.75]
	\draw[2label] (0,-1) to (0,-.5);
	\draw[1label] (0,-.5) to [out=150,in=210] (0,.5);
	\draw[1label] (0,-.5) to (.5,-.25);
	\draw[1label] (.75,-1) to [out=90,in=330] (.5,-.25);
	\draw[2label] (.5,-.25) to  (.5,.25);
	\draw[1label] (.5,.25) to (0,.5);
	\draw[1label] (.5,.25) to [out=30,in=270] (.75,1);
	\draw[2label] (0,.5) to (0,1);
\end{tikzpicture}
};
\endxy
=[3]^2\,
\xy
(0,0)*{
\begin{tikzpicture}[scale=.75]
	\draw[2label] (0,-1) to (0,1);
	\draw[1label] (.75,-1) to (.75,1);
\end{tikzpicture}
};
\endxy
- \frac{1}{[2]}\,
\xy
(0,0)*{
\begin{tikzpicture}[scale=.75]
	\draw[2label] (0,-1) to (0,-.5);
	\draw[1label] (0,-.5) to [out=150,in=210] (0,.5);
	\draw[1label] (0,-.5) to (.5,-.25);
	\draw[1label] (.75,-1) to [out=90,in=330] (.5,-.25);
	\draw[0label] (.5,-.25) to  (.5,.25);
	\draw[1label] (.5,.25) to (0,.5);
	\draw[1label] (.5,.25) to [out=30,in=270] (.75,1);
	\draw[2label] (0,.5) to (0,1);
\end{tikzpicture}
};
\endxy
+ \frac{[3]^2}{[2]}
\xy
(0,0)*{
\begin{tikzpicture}[scale=.75]
	\draw[2label] (-.25,-1) to [out=90,in=210] (.125,-.4);
	\draw[1label] (.5,-1) to [out=90,in=330] (.125,-.4);
	\draw[3label] (.125,-.4) to (.125,.4);
	\draw[2label] (.125,.4) to [out=150,in=270] (-.25,1);
	\draw[1label] (.125,.4) to [out=30,in=270] (.5,1);
\end{tikzpicture}
};
\endxy
\\
\xy
(0,0)*{
\begin{tikzpicture}[scale=.75]
	\draw[1label] (-.25,-1.25) to [out=90,in=210] (0,-.875);
	\draw[1label] (.25,-1.25) to [out=90,in=-30] (0,-.875);
	\draw[2label] (0,-.875) to (0,-.5);
	\draw[1label] (0,-.5) to [out=150,in=210] (0,.5);
	\draw[1label] (0,-.5) to (.5,-.25);
	\draw[1label] (.75,-1.25) to [out=90,in=330] (.5,-.25);
	\draw[0label] (.5,-.25) to  (.5,.25);
	\draw[1label] (.5,.25) to (0,.5);
	\draw[1label] (.5,.25) to [out=30,in=270] (.75,1);
	\draw[0label] (0,.5) to (0,1);
\end{tikzpicture}
};
\endxy
-
\xy
(0,0)*{
\begin{tikzpicture}[scale=.75]
	\draw[1label] (0,-1.25) to [out=90,in=210] (.25,.125);
	\draw[1label] (.5,-1.25) to [out=90,in=210] (.75,-.875);
	\draw[1label] (1,-1.25) to [out=90,in=-30] (.75,-.875);
	\draw[2label] (.75,-.875) to (.75,-.125);
	\draw[1label] (.75,-.125) to (.25,.125);
	\draw[1label] (.75,-.125) to [out=30,in=270] (1,1);
	\draw[0label] (.25,.125) to (.25,1);
\end{tikzpicture}
};
\endxy
= [2] \left( \;
\xy
(0,0)*{
\begin{tikzpicture}[scale=.75]
	\draw[1label] (0,-1.25) to [out=90,in=210] (.25,.125);
	\draw[1label] (.5,-1.25) to [out=90,in=210] (.75,-.875);
	\draw[1label] (1,-1.25) to [out=90,in=-30] (.75,-.875);
	\draw[0label] (.75,-.875) to (.75,-.125);
	\draw[1label] (.75,-.125) to (.25,.125);
	\draw[1label] (.75,-.125) to [out=30,in=270] (1,1);
	\draw[0label] (.25,.125) to (.25,1);
\end{tikzpicture}
};
\endxy
-
\xy
(0,0)*{
\begin{tikzpicture}[scale=.75]
	\draw[1label] (-.25,-1.25) to [out=90,in=210] (0,-.875);
	\draw[1label] (.25,-1.25) to [out=90,in=-30] (0,-.875);
	\draw[0label] (0,-.875) to (0,-.5);
	\draw[1label] (0,-.5) to [out=150,in=210] (0,.5);
	\draw[1label] (0,-.5) to (.5,-.25);
	\draw[1label] (.75,-1.25) to [out=90,in=330] (.5,-.25);
	\draw[0label] (.5,-.25) to  (.5,.25);
	\draw[1label] (.5,.25) to (0,.5);
	\draw[1label] (.5,.25) to [out=30,in=270] (.75,1);
	\draw[0label] (0,.5) to (0,1);
\end{tikzpicture}
};
\endxy
\; \right)
\quad , \;\;
\xy
(0,0)*{
\begin{tikzpicture}[scale=.75]
	\draw[0label] (0,-1) to (0,-.5);
	\draw[1label] (0,-.5) to [out=150,in=210] (0,.5);
	\draw[1label] (0,-.5) to (.5,-.25);
	\draw[1label] (.75,-1) to [out=90,in=330] (.5,-.25);
	\draw[0label] (.5,-.25) to  (.5,.25);
	\draw[1label] (.5,.25) to (0,.5);
	\draw[1label] (.5,.25) to [out=30,in=270] (.75,1);
	\draw[0label] (0,.5) to (0,1);
\end{tikzpicture}
};
\endxy
= \;
\xy
(0,0)*{
\begin{tikzpicture}[scale=.75]
	\draw[0label] (0,-1) to (0,1);
	\draw[1label] (.75,-1) to (.75,1);
\end{tikzpicture}
};
\endxy
\end{gathered}
\end{equation}

The astute reader will note that rescaling both of the generating morphisms 
\[
\xy
(0,0)*{
\begin{tikzpicture}[scale =.5, smallnodes]
	\draw[1label] (0,0) node[below]{$1$} to [out=90,in=210] (.5,.75);
	\draw[1label] (1,0) node[below]{$1$} to [out=90,in=330] (.5,.75);
	\draw[0label] (.5,.75) to (.5,1.5) node[above]{$0'$};
\end{tikzpicture}
};
\endxy
\quad \text{and} \quad
\xy
(0,0)*{
\begin{tikzpicture}[scale =.5, smallnodes,yscale=-1]
	\draw[1label] (0,0) node[above]{$1$} to [out=90,in=210] (.5,.75);
	\draw[1label] (1,0) node[above]{$1$} to [out=90,in=330] (.5,.75);
	\draw[0label] (.5,.75) to (.5,1.5) node[below]{$0'$};
\end{tikzpicture}
};
\endxy
\]
by $\sqrt{-1}$ removes all minus signs from the above relations. 
As such, save for a troubling denominator, $\Web(\gspn[6])$ seems ripe for categorification. 
We plan a detailed study of $\Web(\gspn[6])$, in the context of type $C$ link homologies, 
in future work.
Note, however, that the
$\Web(\spn[6])$ relation
\[
\xy
(0,0)*{
\begin{tikzpicture}[scale=.3]
	\draw[1label] (-1,0) to (0,1);
	\draw[2label] (1,0) to (0,1);
	\draw[2label] (0,2.5) to (-1,3.5);
	\draw[1label] (0,2.5) to (1,3.5);
	\draw[1label] (0,1) to (0,2.5);
\end{tikzpicture}
};
\endxy
-
\xy
(0,0)*{
\begin{tikzpicture}[scale=.3,rotate=90,xscale=-1]
	\draw[1label] (-1,0) to (0,1);
	\draw[2label] (1,0) to (0,1);
	\draw[2label] (0,2.5) to (-1,3.5);
	\draw[1label] (0,2.5) to (1,3.5);
	\draw[1label] (0,1) to (0,2.5);
\end{tikzpicture}
};
\endxy
=[3] \Bigg(
\xy
(0,0)*{
\begin{tikzpicture}[scale=.3,rotate=90,xscale=-1]
	\draw[1label] (-1,0) to (0,1);
	\draw[2label] (1,0) to (0,1);
	\draw[2label] (0,2.5) to (-1,3.5);
	\draw[1label] (0,2.5) to (1,3.5);
	\draw[3label] (0,1) to (0,2.5);
\end{tikzpicture}
};
\endxy
-
\xy
(0,0)*{
\begin{tikzpicture}[scale=.3]
	\draw[1label] (-1,0) to (0,1);
	\draw[2label] (1,0) to (0,1);
	\draw[2label] (0,2.5) to (-1,3.5);
	\draw[1label] (0,2.5) to (1,3.5);
	\draw[3label] (0,1) to (0,2.5);
\end{tikzpicture}
};
\endxy
\; \Bigg)
\]
has no analogue in $\Web(\gspn[6])$. 
We would like to impose the relation
\begin{equation}\label{eq:want}
\xy
(0,0)*{
\begin{tikzpicture}[scale=.75]
	\draw[1label] (-.25,-1.25) to [out=90,in=210] (0,-.875);
	\draw[2label] (.25,-1.25) to [out=90,in=-30] (0,-.875);
	\draw[1label] (0,-.875) to (0,-.5);
	\draw[2label] (0,-.5) to [out=150,in=210] (0,.5);
	\draw[1label] (0,-.5) to (.5,-.25);
	\draw[1label] (.75,-1.25) to [out=90,in=330] (.5,-.25);
	\draw[0label] (.5,-.25) to  (.5,.25);
	\draw[2label] (.5,.25) to (0,.5);
	\draw[2label] (.5,.25) to [out=30,in=270] (.75,1);
	\draw[0label] (0,.5) to (0,1);
\end{tikzpicture}
};
\endxy
-
\xy
(0,0)*{
\begin{tikzpicture}[scale=.75]
	\draw[1label] (0,-1.25) to [out=90,in=210] (.25,.125);
	\draw[2label] (.5,-1.25) to [out=90,in=210] (.75,-.875);
	\draw[1label] (1,-1.25) to [out=90,in=-30] (.75,-.875);
	\draw[1label] (.75,-.875) to (.75,-.125);
	\draw[1label] (.75,-.125) to (.25,.125);
	\draw[2label] (.75,-.125) to [out=30,in=270] (1,1);
	\draw[0label] (.25,.125) to (.25,1);
\end{tikzpicture}
};
\endxy
= [3] \left( \;
\xy
(0,0)*{
\begin{tikzpicture}[scale=.75]
	\draw[1label] (0,-1.25) to [out=90,in=210] (.25,.125);
	\draw[2label] (.5,-1.25) to [out=90,in=210] (.75,-.875);
	\draw[1label] (1,-1.25) to [out=90,in=-30] (.75,-.875);
	\draw[3label] (.75,-.875) to (.75,-.125);
	\draw[1label] (.75,-.125) to (.25,.125);
	\draw[2label] (.75,-.125) to [out=30,in=270] (1,1);
	\draw[0label] (.25,.125) to (.25,1);
\end{tikzpicture}
};
\endxy
-
\xy
(0,0)*{
\begin{tikzpicture}[scale=.75]
	\draw[1label] (-.25,-1.25) to [out=90,in=210] (0,-.875);
	\draw[2label] (.25,-1.25) to [out=90,in=-30] (0,-.875);
	\draw[3label] (0,-.875) to (0,-.5);
	\draw[2label] (0,-.5) to [out=150,in=210] (0,.5);
	\draw[1label] (0,-.5) to (.5,-.25);
	\draw[1label] (.75,-1.25) to [out=90,in=330] (.5,-.25);
	\draw[0label] (.5,-.25) to  (.5,.25);
	\draw[2label] (.5,.25) to (0,.5);
	\draw[2label] (.5,.25) to [out=30,in=270] (.75,1);
	\draw[0label] (0,.5) to (0,1);
\end{tikzpicture}
};
\endxy
\; \right)
\end{equation}
as an analogue, 
but this requires vertices which are not (yet) defined in $\Web(\gspn[6])$.
Thus, the above informal definition of $\Web(\gspn[6])$ should be expanded to include them.
(The use of such vertices is also suggested by the following.)
\end{rem}

We now introduce the $\gspn[6]$ ladder category. 
To motivate the definition, note that we would like to define morphisms in this category 
to be $\Web(\gspn[6])$ webs written in the ladder form of \cite{CKM}. 
We would then impose ladder analogues of the relations in $\Web(\gspn[6])$,
e.g.
\begin{equation}\label{eq:problem}
\xy
(0,0)*{
\begin{tikzpicture}[anchorbase, scale=.75]
	\draw[2label] (0,0) to (0,.5);
	\draw[1label] (.75,0) to (.75,1);
	\draw[1label] (0,.5) to (0,2);
	\draw[1label] (0,.5) to (.75,1);
	\draw[2label] (.75,1) to (.75,1.5);
	\draw[1label] (.75,1.5) to (0,2);
	\draw[1label] (.75,1.5) to (.75,2.5);
	\draw[2label] (0,2) to (0,2.5);
\end{tikzpicture}
};
\endxy
=[3]^2 \;
\xy
(0,0)*{
\begin{tikzpicture}[anchorbase, scale=.75]
	\draw[2label] (0,0) to (0,2.5);
	\draw[1label] (.75,0) to (.75,2.5);
\end{tikzpicture}
};
\endxy
-\frac{1}{[2]} \;
\xy
(0,0)*{
\begin{tikzpicture}[anchorbase, scale=.75]
	\draw[2label] (0,0) to (0,.5);
	\draw[1label] (.75,0) to (.75,1);
	\draw[1label] (0,.5) to (0,2);
	\draw[1label] (0,.5) to (.75,1);
	\draw[0label] (.75,1) to (.75,1.5);
	\draw[1label] (.75,1.5) to (0,2);
	\draw[1label] (.75,1.5) to (.75,2.5);
	\draw[2label] (0,2) to (0,2.5);
\end{tikzpicture}
};
\endxy
+\frac{[3]^2}{[2]} \;
\xy
(0,0)*{
\begin{tikzpicture}[anchorbase, scale=.75]
	\draw[2label] (0,0) to (0,1);
	\draw[1label] (.75,0) to (.75,.5);
	\draw[1label] (.75,.5) to (0,1);
	\draw[3label] (0,1) to (0,1.5);
	\draw[2label] (0,1.5) to (0,2.5);
	\draw[1label] (0,1.5) to (.75,2);
	\draw[1label] (.75,2) to (.75,2.5);
\end{tikzpicture}
};
\endxy
\end{equation}
However, we eventually wish to employ PBW
style arguments to prove Theorems \ref{thmEmpty} and \ref{thmTr}, 
where negatively and positively sloped ``ladder rungs'' play the roles of 
the Chevalley generators $E$ and $F$ in the 
PBW theorem for $\sln[2]$, respectively. 
The second term on the right-hand side of relation \eqref{eq:problem} is then problematic. 
However, we note that the ``middle slice'' of this term corresponds to the object $1 \otimes 0'$, 
and, by introducing a new object $1' \cong 1 \otimes 0'$ 
(together with further generating morphisms and relations), 
we can replace this by the ladder
\[
\xy
(0,0)*{
\begin{tikzpicture}[anchorbase, scale=.75]
	\draw[2label] (0,0) to (0,1);
	\draw[1label] (.75,0) to (.75,.5);
	\draw[1label] (.75,.5) to (0,1);
	\draw[1label] (0,1) to node[left, smallnodes, xshift=2pt]{$1'$} (0,1.5);
	\draw[2label] (0,1.5) to (0,2.5);
	\draw[1label] (0,1.5) to (.75,2);
	\draw[1label] (.75,2) to (.75,2.5);
\end{tikzpicture}
};
\endxy
\]
After doing so, relation \eqref{eq:problem} is amenable to PBW style arguments.
Similar considerations, and equation \eqref{eq:want} above, 
suggest that we should also introduce new objects 
$k^{(i)}$ that correspond to the tensor product
$k \otimes \underbrace{0' \otimes \cdots \otimes 0'}_{i}$.

\begin{defn}
The category $\Lad(\gspn[6])$ is the $\C(q)$-linear monoidal category with objects generated by:
\[
\{0^{(i)}, 1^{(j)}, 2^{(s)}, 3^{(t)} \mid i,j,s,t \geq 0\}.
\] 
Defining the \emph{mass} of an object by 
$\mu(k^{(i)}) = k+2i$,
the morphisms are then generated by the 
\emph{rung morphisms}:
\begin{equation}\label{eq:RungDef}
\begin{tikzpicture}[anchorbase,xscale=-1]
	\draw[glabel] (0,0) node[black, below]{$b^{(j)}$} to (0,1.5) node[black, above]{$y^{(t)}$};
	\draw[glabel] (.75,0) node[black, below]{$a^{(i)}$} to (.75,1.5) node[black, above]{$x^{(s)}$};
	\draw[glabel] (0,.5) to node[black, above,xshift=2pt]{$c^{(r)}$} (.75,1);
\end{tikzpicture}
\quad \text{and} \quad
\begin{tikzpicture}[anchorbase]
	\draw[glabel] (0,0) node[black, below]{$b^{(j)}$} to (0,1.5) node[black, above]{$y^{(t)}$};
	\draw[glabel] (.75,0) node[black, below]{$a^{(i)}$} to (.75,1.5) node[black, above]{$x^{(s)}$};
	\draw[glabel] (0,.5) to node[black, above]{$c^{(r)}$} (.75,1);
\end{tikzpicture}
\end{equation}
where the labels satisfy all of the following conditions:
\begin{enumerate}
\item \textbf{Rung mass:} $\mu(c^{(r)}) > 0$; 
\item \textbf{Upper vertex:} exactly one of the following holds for the elements $a,c,x$:
\begin{itemize}
\item two are equal and the other is $0$,
\item two are $1$ and the other is $2$, or
\item all are nonzero and distinct;
\end{itemize}
\item \textbf{Lower vertex:} exactly one of the following holds for the elements $b,c,y$:
\begin{itemize}
\item two are equal and the other is $0$,
\item two are $1$ and the other is $2$, or
\item all are nonzero and distinct;
\end{itemize}
\item \textbf{Mass preservation:}
$\mu\left(a^{(i)}\right) + \mu\left(c^{(r)}\right) = \mu\left(x^{(s)}\right)$ and $\mu\left(b^{(j)}\right) = \mu\left(y^{(t)}\right) + \mu\left(c^{(r)}\right)$.
\end{enumerate}

We adopt terminology from \cite{CKM}. 
Specifically, we will refer to compositions of tensor products of generating morphisms as \emph{ladders},
so morphisms in $\Lad(\gspn[6])$ are $\C(q)$-linear combinations of ladders.
The vertical line segments in ladders are called \emph{uprights}, 
and the segments passing between the uprights as \emph{rungs}.
We refer to the generating ladders in \eqref{eq:RungDef} as 
\emph{$E$-rungs} and \emph{$F$-rungs}, respectively.
Finally, we will write the object (or edge label) 
$\ell^{(k)}$ as $\ell$ with $k$ primes, when $k$ is small,
e.g. $2' = 2^{(1)}$ and $3 = 3^{(0)}$.

The morphisms are subject to relations, 
all of which take the following forms, 
or a reflection thereof. 
Here, we allow some rungs to have zero mass, with the understanding that 
such a rung is simply the corresponding identity morphism:

\begin{itemize}
\item \textbf{Rung explosion:}
If $\mu\left(c^{(r)}\right) > 2$ or $c^{(r)}=2^{(0)}$, we have that
\begin{equation}\label{eq:explosion}
\begin{tikzpicture}[anchorbase]
	\draw[glabel] (0,0) to (0,2);
	\draw[glabel] (.75,0) to (.75,2);
	\draw[glabel] (0,.75) to node[black, above]{$c^{(r)}$} (.75,1.25);
\end{tikzpicture}
=
\sum_i
f_i(q) \;
\begin{tikzpicture}[anchorbase, xscale=1.25]
	\draw[glabel] (0,0) to (0,2);
	\draw[glabel] (.75,0) to (.75,2);
	\draw[glabel] (0,.5) to node[below,black]{$b_i^{(t_i)}$}  (.75,1);
	\draw[glabel] (0,1) to node[above,black]{$a_i^{(s_i)}$} (.75,1.5);
\end{tikzpicture}
\end{equation}
with $f_i \in \C(q)$ and 
$\mu\left(c^{(r)}\right) > 
\max\left(\mu\left(a_i^{(s_i)}\right), \mu\left(b_i^{(t_i)}\right)\right)$.

\item \textbf{Rung swap:} 
For $a^{(s)},b^{(t)} \in \{1, 0'\}$, we have
\begin{equation}\label{eq:swap}
\xy
(0,0)*{
\begin{tikzpicture}[anchorbase,scale=1]
	\draw[glabel] (0,0) to (0,2.5);
	\draw[glabel] (.75,0) to (.75,2.5);
	\draw[glabel] (1.5,0) to (1.5,2.5);
	\draw[glabel] (0,.5) to node[above,black]{$a^{(s)}$} (.75,1);
	\draw[glabel] (1.5,1.5) to node[above,black,yshift=1pt,xshift=1pt]{$b^{(t)}$} (.75,2);
\end{tikzpicture}
};
\endxy
=
\begin{cases}
\displaystyle \sum_i f_i(q)\;
\xy
(0,0)*{
\begin{tikzpicture}[anchorbase,scale=1]
	\draw[glabel] (0,0) to (0,2.5);
	\draw[glabel] (.75,0) to (.75,2.5);
	\draw[glabel] (1.5,0) to (1.5,2.5);
	\draw[1label] (0,1.5) to node[above,black]{$1$} (.75,2);
	\draw[1label] (1.5,.5) to node[above,black,yshift=1pt,xshift=1pt]{$1$} (.75,1);
\end{tikzpicture}
};
\endxy
+ g(q)
\xy
(0,0)*{
\begin{tikzpicture}[anchorbase,scale=1]
	\draw[glabel] (0,0) to (0,2.5);
	\draw[glabel] (.75,0) to (.75,2.5);
	\draw[glabel] (1.5,0) to (1.5,2.5);
	\draw[1label] (0,.75) to node[above]{$1$} (.75,.25);
	\draw[1label] (1.5,.75) to node[above]{$1$} (.75,1.25);
	\draw[0label] (0,1.75) to node[above]{$0'$} (.75,2.25);
\end{tikzpicture}
};
\endxy
& \text{ if } a^{(s)} = 1 = b^{(t)} \\ \\
\hfil
\xy
(0,0)*{
\begin{tikzpicture}[anchorbase,scale=1]
	\draw[glabel] (0,0) to (0,2.5);
	\draw[glabel] (.75,0) to (.75,2.5);
	\draw[glabel] (1.5,0) to (1.5,2.5);
	\draw[glabel] (0,1.5) to node[above,black]{$a^{(s)}$} (.75,2);
	\draw[glabel] (1.5,.5) to node[above,black,yshift=1pt,xshift=1pt]{$b^{(t)}$} (.75,1);
\end{tikzpicture}
};
\endxy
& \text{ if } a^{(s)} = 0' \text{ or } b^{(t)}=0'
\end{cases}
\end{equation}
with $f,g_i\in\C(q)$.

\item \textbf{Square relation:}
For $a^{(s)},b^{(t)} \in \{1, 0'\}$, we have
\begin{equation}\label{eq:square}
\xy
(0,0)*{
\begin{tikzpicture}[anchorbase,scale=1]
	\draw[glabel] (0,0) to (0,2.5);
	\draw[glabel] (.75,0) to (.75,2.5);
	\draw[glabel] (0,.5) to node[below,black]{$b^{(t)}$} (.75,1);
	\draw[glabel] (.75,1.5) to node[above,yshift=1pt,xshift=1pt,black]{$a^{(s)}$} (0,2);
\end{tikzpicture}
};
\endxy
=f(q)\;
\xy
(0,0)*{
\begin{tikzpicture}[anchorbase,scale=1]
	\draw[glabel] (0,0) to (0,2.5);
	\draw[glabel] (.75,0) to (.75,2.5);
\end{tikzpicture}
};
\endxy
+\sum_i g_i(q)\;
\xy
(0,0)*{
\begin{tikzpicture}[anchorbase,scale=1,xscale=-1]
	\draw[glabel] (0,0) to (0,2.5);
	\draw[glabel] (.75,0) to (.75,2.5);
	\draw[glabel] (0,.5) to node[below,black]{$a^{(s)}$} (.75,1);
	\draw[glabel] (.75,1.5) to node[above,black]{$b^{(t)}$} (0,2);
\end{tikzpicture}
};
\endxy
\end{equation}
\end{itemize}
The precise relations (i.e. the values of the above coefficients)
depend on the labels and masses of the rungs and uprights, 
and are recorded in Appendix \ref{App:Rels}. 
There, we continue our edge-coloring conventions, 
denoting $1^{(j)}$- ,$2^{(s)}$-, and $3^{(t)}$-labeled 
ladder edges in \textbf{black}, \textcolor{ourblue}{\textbf{blue}}, and \textcolor{green}{\textbf{green}}. 
We denote $0^{(i)}$-labeled edges in thin, dashed black, 
as in Remark \ref{rem:Web(gsp)}.
All such relations are ``ladderized" versions of relations that hold in $\Web(\spn[6])$. 
\end{defn}
\begin{rem}
We content ourselves here with only displaying the general form of the relations in $\Lad(\gspn[6])$, 
as the specific forms of the relations are only used to show that the functor 
in Proposition \ref{thm:Phi} below is well-defined.
After that, the general forms of the relations presented above suffice for the proofs 
of the remaining results from Section \ref{sec:Theorems}.
\end{rem}

\begin{rem}
Ladders were introduced as a tool in type BCD representation theory in \cite{TubSar}, 
where they are used to prove Howe dualities between quantum groups in these types
and certain coideal subalgebras of $U_q(\gln)$. 
In that setup, ladders describe morphisms between representations of these coideal subalgebras, 
rather than the quantum groups.
Hence, we do not expect a direct relation to our ladder category, except in the classical $q \to 1$ limit. 
It would be interesting to compare our categories in that case; 
however, we note that their edge labels correspond to (skew)symmetric tensors, 
as opposed to fundamental representations.
\end{rem}

\begin{prop}\label{thm:Phi}
There is a monoidal functor $\Phi:\Lad(\gspn[6])\to\Web(\spn[6])$
given by 
\[
\begin{gathered}
\Phi
\left( x^{(i)} \right) = 
\begin{cases}
x & \text{if } x \neq 0 \\
\varnothing & \text{if } x=0
\end{cases} \\
\Phi \left(
\begin{tikzpicture}[anchorbase,xscale=-1]
	\draw[glabel] (0,0) node[black, below]{$b^{(j)}$} to (0,1.5) node[black, above]{$y^{(t)}$};
	\draw[glabel] (.75,0) node[black, below]{$a^{(i)}$} to (.75,1.5) node[black, above]{$x^{(s)}$};
	\draw[glabel] (0,.5) to node[black, above,xshift=2pt]{$c^{(r)}$} (.75,1);
\end{tikzpicture}
\right) =
\xy
(0,0)*{
\begin{tikzpicture}[anchorbase,xscale=-1]
	\draw[glabel] (0,-1.25) node[black,below]{$b$} to (0,-.5);
	\draw[glabel] (0,-.5) to [out=150,in=270] (-.25,.5) node[black,above]{$y$};
	\draw[glabel] (0,-.5) to node[black,above]{$c$} (.5,-.25);
	\draw[glabel] (.75,-1.25) node[black,below]{$a$} to [out=90,in=-30] (.5,-.25);
	\draw[glabel] (.5,-.25) to [out=90,in=270] (.5,.5) node[black,above]{$x$};
\end{tikzpicture}
};
\endxy
\quad \text{and} \quad
\Phi \left(
\begin{tikzpicture}[anchorbase]
	\draw[glabel] (0,0) node[black, below]{$b^{(j)}$} to (0,1.5) node[black, above]{$y^{(t)}$};
	\draw[glabel] (.75,0) node[black, below]{$a^{(i)}$} to (.75,1.5) node[black, above]{$x^{(s)}$};
	\draw[glabel] (0,.5) to node[black, above]{$c^{(r)}$} (.75,1);
\end{tikzpicture}
\right) = 
\xy
(0,0)*{
\begin{tikzpicture}[anchorbase]
	\draw[glabel] (0,-1.25) node[black,below]{$b$} to (0,-.5);
	\draw[glabel] (0,-.5) to [out=150,in=270] (-.25,.5) node[black,above]{$y$};
	\draw[glabel] (0,-.5) to node[black,above]{$c$} (.5,-.25);
	\draw[glabel] (.75,-1.25) node[black,below]{$a$} to [out=90,in=-30] (.5,-.25);
	\draw[glabel] (.5,-.25) to [out=90,in=270] (.5,.5) node[black,above]{$x$};
\end{tikzpicture}
};
\endxy
\end{gathered}
\]
\end{prop}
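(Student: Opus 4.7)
The plan is to verify that $\Phi$, defined on objects and generating morphisms as stated and extended monoidally, respects the defining relations of $\Lad(\gspn[6])$. Well-definedness on generating morphisms is essentially automatic: the Upper vertex and Lower vertex conditions are exactly what is needed so that the two trivalent vertices appearing in the H-shaped image are either genuine generators of $\Web(\spn[6])$ (e.g.\ the $(1,1) \to 2$ and $(1,2) \to 3$ vertices and their duals), derived vertices of the form $(2,1) \to 3$ defined as a rescaled composition, or identity strands (when the rung has $0'$ at one corner), while the Mass preservation condition guarantees that the source/target labels match. Monoidality on objects holds by fiat since $\Phi$ forgets the primes and kills $0^{(i)}$.

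First I would handle the rung swap relations. Rungs that are sufficiently far apart along the ladder give, under $\Phi$, two H-shapes stacked with disjoint horizontal supports; the swap relation is then simply a height-exchange isotopy in the pivotal category $\Web(\spn[6])$. For the remaining rung swap cases (neighboring rungs involving $1$'s and $0'$'s on overlapping uprights), these are ladderized forms of the Reidemeister-III style commutation relations that follow from the hexagonal relations on trivalent vertices in $\Web(\spn[6])$ (e.g.\ the associativity \eqref{eq:Assoc} and its analogues); in each case, I would slide cups/caps using pivotality and apply the single applicable defining relation from \eqref{eq:spRel} to match the coefficients listed in Appendix \ref{App:Rels}.

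Next I would treat the rung explosion relations: a single rung of mass $>2$ (or of mass $2$ with label $2$) in $\Lad(\gspn[6])$ maps to an H-shape with a ``thick'' horizontal edge. By the defining relation for the shorthand $(1,2) \to 3$ vertex following \eqref{eq:spRel} (and its analogues), this thick edge can be expanded as a composition of thinner trivalent vertices in $\Web(\spn[6])$, which is precisely what the right-hand side of \eqref{eq:explosion} represents after applying $\Phi$. Finally, the square relations \eqref{eq:square} correspond to the bubble, digon, and square relations in \eqref{eq:spRel}: the three possible image diagrams (a pair of vertical strands, a clockwise-rotated H, and the same with a heavier middle edge) form a basis for the corresponding $\Hom$-space in $\Web(\spn[6])$ by the same argument used in the proof of Theorem \ref{thm:Psi}, and the coefficients recorded in Appendix \ref{App:Rels} are dictated by the defining relations of $\Web(\spn[6])$ after straightening via pivotal isotopy.

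The main obstacle is the bookkeeping: there are a large number of square and swap relations distinguished by the labels of the uprights and rungs, and each requires identifying the correct defining relation of $\Web(\spn[6])$ (or, for heavier rungs, a consequence of the rung explosion step applied beforehand). Once rung explosion is established, however, the remaining relations reduce to the case where all rungs are mass $\leq 2$ and carry labels in $\{0', 1, 2\}$, where only finitely many combinations occur; I would organize the verification by fixing the upright labels and ascending in rung complexity. Since every relation of $\Lad(\gspn[6])$ is by design a ladderization of an identity already holding in $\Web(\spn[6])$, each verification is conceptually routine, and no new relations in $\Web(\spn[6])$ need to be discovered.
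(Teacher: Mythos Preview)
Your proposal is correct and is essentially the same approach as the paper's own proof, which consists of a single sentence asserting that every relation in Appendix~\ref{App:Rels} maps under $\Phi$ either to one of the defining relations \eqref{eq:spRel} or to an easy consequence thereof. Your write-up supplies more organizational detail (grouping by rung swap, explosion, and square type) than the paper bothers to record, but the underlying verification is identical: a finite case-by-case check that each ladder relation becomes a valid identity in $\Web(\spn[6])$ once the primes are forgotten and $0$-labeled edges are erased.
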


\begin{proof}
This follows via a direct computation, 
which shows that each of the ladder web relations in Appendix \ref{App:Rels}
is sent by $\Phi$ either to a relation in \eqref{eq:spRel}, 
or to a relation easily deduced from these relations.
\end{proof}

We now show that all webs in $\Web(\spn[6])$ have ``preimages'' in 
$\Lad(\gspn[6])$, hence can be studied using the latter category. To begin, we have:

\begin{lem}\label{lem:ladderComp}
Let $L_1 \in \Hom_{\Lad(\gspn[6])}(\vec{x},\vec{y_c})$ and 
$L_2 \in \Hom_{\Lad(\gspn[6])}(\vec{y_d},\vec{z})$
be ladders such that $\Phi(\vec{y_c}) = \Phi(\vec{y_d})$, 
then there exist ladders
$\tilde{L_1} \in \Hom_{\Lad(\gspn[6])}(\tilde{x},\tilde{y})$ and 
$\tilde{L_2} \in \Hom_{\Lad(\gspn[6])}(\tilde{y},\tilde{z})$
so that $\Phi(\tilde{L_i})=\Phi(L_i)$.
\end{lem}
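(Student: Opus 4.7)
My plan is to construct $\tilde{L}_1$ and $\tilde{L}_2$ as modifications of $L_1, L_2$ obtained by tensoring with identity ladders on objects built from $0^{(i)}$'s and then post-composing with a $\Phi$-identity ladder built from rungs whose rung-label is of the form $0^{(r)}$. The underlying principle is that $\Phi$ completely forgets objects $0^{(i)}$ (since $\Phi(0^{(i)})=\varnothing$) and rungs whose rung-label begins with the letter $0$, so any ladder built from these two ingredients is a $\Phi$-identity and can be pre/post-composed with $L_1, L_2$ without changing their $\Phi$-images.

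More precisely, from the explicit description of $\Phi$ in Proposition \ref{thm:Phi}, a rung whose rung-label is $c^{(r)}$ with $c=0$ is sent to two trivalent vertices joined by a $0$-labeled (hence empty) edge; the vertex conditions in the definition of $\Lad(\gspn[6])$ force the adjacent uprights to carry the same nonzero letter on each side, so each trivalent vertex degenerates to a straight strand, and the entire rung is $\Phi$-equivalent to the identity on its two uprights. These rungs therefore furnish elementary ``prime-shuffling'' moves between adjacent uprights. Tensoring with $\mathrm{id}_{0^{(i)}}$ is likewise $\Phi$-trivial, and gives us freedom to adjust the number of uprights and the total mass.

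With these tools, I would first pad $L_1$ and $L_2$ by tensoring with identity ladders on suitable $0^{(i)}$-sequences $\vec{a}_1$ and $\vec{a}_2$ so that $\vec{y_c}\otimes\vec{a}_1$ and $\vec{y_d}\otimes\vec{a}_2$ have the same number of uprights and the same total mass; this is possible because $\Phi(\vec{y_c})=\Phi(\vec{y_d})$ forces the sequences of nonzero letters to agree, and a parity check (using that the mass of $k^{(i)}$ has the same parity as the letter $k$) shows that the resulting mass discrepancy is always even. Next I would construct a $\Phi$-identity ladder $M:\vec{y_c}\otimes\vec{a}_1\to\vec{y_d}\otimes\vec{a}_2$ as a composition of $0$-labeled rungs that successively redistributes primes between uprights until the two objects match. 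Setting $\tilde{y}:=\vec{y_d}\otimes\vec{a}_2$, $\tilde{L}_1:=M\circ(L_1\otimes\mathrm{id}_{\vec{a}_1})$, and $\tilde{L}_2:=L_2\otimes\mathrm{id}_{\vec{a}_2}$ then gives the result, since $\Phi$ is monoidal and both $M$ and the identity paddings are $\Phi$-identities.

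The main obstacle is the construction of $M$ when the nonzero letters of $\vec{y_c}\otimes\vec{a}_1$ sit in different positions from those of $\vec{y_d}\otimes\vec{a}_2$. A single $0^{(r)}$-labeled rung connects only uprights sharing their nonzero letter, so moving primes across an intervening $0$-labeled upright requires a multi-step sequence: one temporarily absorbs the prime mass into a neighboring matching-letter upright, transports it along a chain of such uprights, and finally releases it into the intended position. Verifying that every intermediate rung satisfies the vertex conditions listed in the definition of $\Lad(\gspn[6])$ reduces to a finite case analysis, which can be organized by induction on the number of positions at which the two padded objects disagree. Modulo this combinatorial bookkeeping, the proof proceeds as outlined.
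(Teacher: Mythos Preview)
Your construction of $M$ has a genuine gap. A rung with label $0^{(r)}$ has $c=0$, so the vertex conditions force $a=x$ and $b=y$: each upright keeps its letter, and the rung only transfers $r$ primes between the two adjacent uprights. Hence any ladder built solely from $0$-labeled rungs fixes the letter in every upright position. But $\Phi(\vec{y_c})=\Phi(\vec{y_d})$ only guarantees that the \emph{subsequences of nonzero letters} agree, not that those nonzero letters occupy the same positions. Take $\vec{y_c}=0'\otimes 1$ and $\vec{y_d}=1\otimes 0'$: both have $\Phi$-image $1$, equal length, and equal total mass, so your padding step is vacuous; yet no composition of $0^{(r)}$-rungs can carry $0'\otimes 1$ to $1\otimes 0'$, since the first upright remains $0$-lettered throughout. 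You correctly flagged ``nonzero letters in different positions'' as the main obstacle, but your proposed workaround (absorbing and transporting \emph{prime} mass through chains of matching-letter uprights) addresses a different and easier problem and cannot relocate a letter.

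The paper's proof repairs this with two moves you did not use. First, it post-/pre-composes with rungs whose rung-label carries the \emph{entire} nonzero letter between a nonzero upright and an adjacent $0$-labeled one; these are also $\Phi$-identities (one edge at each trivalent vertex is $0$-labeled, so the vertex degenerates to a straight strand), and repeated use slides all nonzero letters to the far left in both the codomain of $L_1$ and the domain of $L_2$. After this normalization the two objects differ only in exponents. Second, the paper matches those exponents not by composing with further rungs but by directly adding a constant to every exponent along a given upright of $\hat{L}_1$ or $\overline{L}_2$; this relabeling preserves the vertex and mass-preservation conditions (the mass on that upright shifts by the same even amount above and below every incident rung) and is invisible to $\Phi$. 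Incorporating either ingredient would let you complete the argument.
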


Informally, this lemma says that if we have ladders 
with composable images under $\Phi$ (but are perhaps not composable themselves), 
then we can find composable ladders with the same images under $\Phi$.

\begin{proof}
We repeatedly post-compose $L_1$ and pre-compose $L_2$ 
with rungs of the form:
\[
\begin{tikzpicture}[anchorbase]
	\draw[0label] (0,0) to (0,.6);
	\draw[0label] (.75,.4) to (.75,1);
	\draw[glabel] (.75,0) to (.75,.4) to (0,.6) to (0,1);
\end{tikzpicture}
\quad \text{and} \quad
\begin{tikzpicture}[anchorbase,xscale=-1]
	\draw[0label] (0,0) to (0,.6);
	\draw[0label] (.75,.4) to (.75,1);
	\draw[glabel] (.75,0) to (.75,.4) to (0,.6) to (0,1);
\end{tikzpicture}
\]
respectively, to obtain ladders $\hat{L_1}$ and $\hat{L_2}$ with 
the codomain of $\hat{L_1}$ equal to $\hat{y_c}=y_1^{(r_1)}\cdots y_k^{(r_k)} 0^{(i_1)}\cdots 0^{(i_l)}$
and the domain of $\hat{L_2}$ equal to $\hat{y_d}=y_1^{(s_1)}\cdots y_k^{(s_k)} 0^{(j_1)}\cdots 0^{(j_m)}$
with $y_i \neq 0$ for all $i=1,\ldots,k$.
By construction, we have that $\Phi(\hat{L_i}) = \Phi(L_i)$.

Now, if $l \geq m$ then define $\overline{L_2} = \hat{L_2} \otimes \id_{0^{l-m}}$, 
which has domain $\hat{y_d}=y_1^{(s_1)}\cdots y_k^{(s_k)} 0^{(j_1)}\cdots 0^{(j_l)}$
with $j_t = 0$ for $m+1 \leq t \leq l$.
Finally we define $\tilde{L_1}$ and $\tilde{L_2}$ as follows.
For $1 \leq i \leq k$, 
add $|r_i - s_i|$ to the exponent of every label in the $i^{th}$ upright of 
$\hat{L_1}$ if $r_i < s_i$, and in the $i^{th}$ upright of $\overline{L_2}$ otherwise. 
Similarly, for $1 \leq t \leq l$, add $|i_t - j_t|$ to the exponent of every label in the 
$(k+t)^{th}$ upright of $\hat{L_1}$ if $i_t < j_t$, and in the $(k+t)^{th}$ upright of $\overline{L_2}$ otherwise. 
It follows that the codomain of $\tilde{L_1}$ and the domain of $\tilde{L_2}$ are both equal to 
$\tilde{y} = y_1^{(\max(r_1,s_1))} \cdots y_k^{(\max(r_k,s_k))} 0^{(\max(i_1,j_1))} \cdots 0^{(\max(i_l,j_l))}$, 
and $\Phi(\tilde{L_i}) = \Phi(L_i)$ by construction. The case $l \leq m$ can be handled similarly.
\end{proof}

\begin{example}
Let
\[
L_1=
\begin{tikzpicture}[anchorbase]
	\draw[2label] (0,0) to (0,.4);
	\draw[1label] (0,.4) to (0,1.5);
	\draw[0label] (.75,0) to (.75,.6);
	\draw[1label] (.75,.6) to (.75,1);
	\draw[0label] (.75,1) to (.75,1.5);
	\draw[2label] (1.5,0) to (1.5,.8);
	\draw[1label] (1.5,.8) to (1.5,1.5);
	\draw[1label] (0,.4) to (.75,.6);
	\draw[1label] (1.5,.8) to (.75,1);
\end{tikzpicture}
\;\; , \;\;
L_2=
\begin{tikzpicture}[anchorbase]
	\draw[1label] (0,0) to (0,.6);
	\draw[2label] (0,.6) to (0,1);
	\draw[1label] (.75,0) to (.75,.4);
	\draw[0label] (.75,.4) to (.75,1);
	\draw[1label] (.75,.4) to (0,.6);
\end{tikzpicture}
\]
where here we omit the explicit exponents in the (co)domains.
Note that $L_2\circ L_1$ is not defined, 
while $\Phi(L_2)\circ\Phi(L_1)$ is defined. 
Lemma \ref{lem:ladderComp} 
then produces the composable ladders
\[
\tilde{L}_1=
\begin{tikzpicture}[anchorbase]
	\draw[2label] (0,0) to (0,.4);
	\draw[1label] (0,.4) to (0,2);
	\draw[0label] (.75,0) to (.75,.6);
	\draw[1label] (.75,.6) to (.75,1);
	\draw[0label] (.75,1) to (.75,1.7);
	\draw[2label] (1.5,0) to (1.5,.8);
	\draw[1label] (1.5,.8) to (1.5,1.5);
	\draw[1label] (0,.4) to (.75,.6);
	\draw[1label] (1.5,.8) to (.75,1);
	\draw[1label] (1.5,1.5) to (.75,1.7);
	\draw[1label] (.75,1.7) to (.75,2);
	\draw[0label] (1.5,1.5) to (1.5,2);
\end{tikzpicture}
\;\; , \;\;
\tilde{L}_2=
\begin{tikzpicture}[anchorbase]
	\draw[1label] (0,0) to (0,.6);
	\draw[2label] (0,.6) to (0,1);
	\draw[1label] (.75,0) to (.75,.4);
	\draw[0label] (.75,.4) to (.75,1);
	\draw[1label] (.75,.4) to (0,.6);
	\draw[0label] (1.5,0) to (1.5,1);
\end{tikzpicture}
\]
with the same image under $\Phi$ as $L_1$ and $L_2$.
\end{example}

\begin{cor}\label{cor:ladderize}
Let $\mathcal{W}$ be a web in $\Web(\spn[6])$,
then there exists a ladder $L_{\mathcal{W}}$ 
in $\Lad(\gspn[6])$ with $\Phi(L_{\mathcal{W}})=\mathcal{W}$. 
\end{cor}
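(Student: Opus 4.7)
The plan is to (i) decompose $\mathcal{W}$ as a composition of tensor products of elementary morphisms of $\Web(\spn[6])$, (ii) exhibit an explicit ladder preimage under $\Phi$ for each such elementary morphism, and (iii) iteratively apply Lemma \ref{lem:ladderComp} to glue these into a single ladder.

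For step (i), the elementary pieces will be identities $\id_k$ for $k \in \{1,2,3\}$, the trivalent vertices of $\Web(\spn[6])$ in all pivotal orientations, and the cup and cap morphisms $\varnothing \to k \otimes k$ and $k \otimes k \to \varnothing$ for each $k \in \{1,2,3\}$. Any web is a composition of tensor products of these.

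For step (ii), one produces ladder preimages by hand. The identity on $k$ is $\Phi(\id_{k^{(0)}})$. The trivalent vertex $1 \otimes 1 \to 2$ is $\Phi$ of the $F$-rung with bottom labels $(1,1)$, top labels $(0,2)$, and rung label $1$: under $\Phi$ the left ``vertex'' has top label $0$ and hence degenerates, collapsing the picture to exactly the desired web; a similar $F$-rung realizes $1 \otimes 2 \to 3$. Rotated trivalent vertices are realized by $E$-rungs of the same shape, e.g. the vertex $2 \to 1 \otimes 1$ comes from the $E$-rung with bottom $(0,2)$, top $(1,1)$, and rung label $1$. Since $\Phi$ collapses every $0^{(i)}$ factor to the empty object, the cup $\varnothing \to k \otimes k$ is realized as $\Phi$ of the $F$-rung with bottom $(0^{(k)},0)$, top $(k,k)$, and rung label $k$; caps are realized dually via $E$-rungs. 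In each case the upper-vertex, lower-vertex, and mass-preservation conditions in the definition of $\Lad(\gspn[6])$ are immediate.

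For step (iii), write $\mathcal{W} = \mathcal{W}_n \circ \cdots \circ \mathcal{W}_1$ with each $\mathcal{W}_i$ a tensor product of elementary morphisms, and let $L_i$ be the corresponding tensor product of the ladder preimages constructed in step (ii). By construction, $\Phi(\mathrm{cod}(L_i))$ and $\Phi(\mathrm{dom}(L_{i+1}))$ both equal $\mathrm{cod}(\mathcal{W}_i) = \mathrm{dom}(\mathcal{W}_{i+1})$, so Lemma \ref{lem:ladderComp}, applied iteratively to the pairs $(L_i, L_{i+1})$, yields composable ladders $\tilde{L}_1, \ldots, \tilde{L}_n$ with $\Phi(\tilde{L}_i) = \Phi(L_i) = \mathcal{W}_i$; their composition is the desired $L_{\mathcal{W}}$. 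The main obstacle is the finite, case-by-case verification in step (ii), particularly for cups and caps at the higher labels $2$ and $3$, where the allowed rung configurations are tightly constrained; once those preimages are in hand, the rest of the argument is essentially bookkeeping via Lemma \ref{lem:ladderComp}.
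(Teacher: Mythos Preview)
Your proof is correct and follows essentially the same route as the paper: decompose the web (after generic isotopy) into horizontal slices built from identities, trivalent vertices, cups, and caps; exhibit a rung in $\Lad(\gspn[6])$ mapping to each elementary piece via $\Phi$; and glue using Lemma~\ref{lem:ladderComp}. The paper states the existence of the rung preimages a bit more uniformly (``for all $a,b,c$ we can find $i,j,s,t$ so that \ldots''), whereas you spell out particular choices, but the content is the same.
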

\begin{proof}
By applying a planar isotopy, we can assume that 
all trivalent vertices and horizontal tangencies in $\mathcal{W}$ occur at distinct heights.
The result then follows by inductively applying Lemma \ref{lem:ladderComp}, 
using the fact that $\Phi$ is monoidal and that 
for all $a,b,c$ we can find $i,j,s,t$ so that 
\[
\Phi \left(
\begin{tikzpicture}[anchorbase, smallnodes,scale=.75]
	\draw[glabel] (0,0) node[below,color=black]{$a^{(i)}$} to (0,.6);
	\draw[glabel] (0,.6) to (0,1) node[above,yshift=-2pt,color=black]{$c^{(s)}$};
	\draw[glabel] (.75,0) node[below,color=black]{$b^{(j)}$} to (.75,.4);
	\draw[0label] (.75,.4) to (.75,1) node[above,yshift=-2pt]{$0^{(t)}$};
	\draw[glabel] (.75,.4) to (0,.6);
\end{tikzpicture}
\right) =
\xy
(0,0)*{
\begin{tikzpicture}[scale =.5,smallnodes]
	\draw[glabel] (0,0) node[below,yshift=2pt,color=black]{$a$} to [out=90,in=210] (.5,.75);
	\draw[glabel] (1,0) node[below,yshift=2pt,color=black]{$b$} to [out=90,in=330] (.5,.75);
	\draw[glabel] (.5,.75) to (.5,1.5) node[above,yshift=-2pt,color=black]{$c$};
\end{tikzpicture}
};
\endxy
\;\; , \;\;
\Phi \left(
\begin{tikzpicture}[anchorbase, smallnodes,scale=.75,yscale=-1]
	\draw[glabel] (0,0) node[above,yshift=-2pt,color=black]{$a^{(s)}$} to (0,.6);
	\draw[glabel] (0,.6) to (0,1) node[below,color=black]{$c^{(i)}$};
	\draw[glabel] (.75,0) node[above,yshift=-2pt,color=black]{$b^{(t)}$} to (.75,.4);
	\draw[0label] (.75,.4) to (.75,1) node[below]{$0^{(j)}$};
	\draw[glabel] (.75,.4) to (0,.6);
\end{tikzpicture}
\right) = 
\xy
(0,0)*{
\begin{tikzpicture}[scale =.5,smallnodes,yscale=-1]
	\draw[glabel] (0,0) node[above,yshift=-2pt,color=black]{$a$} to [out=90,in=210] (.5,.75);
	\draw[glabel] (1,0) node[above,yshift=-2pt,color=black]{$b$} to [out=90,in=330] (.5,.75);
	\draw[glabel] (.5,.75) to (.5,1.5) node[below,yshift=2pt,color=black]{$c$};
\end{tikzpicture}
};
\endxy
\;\; , \;\;
\Phi \left(
\begin{tikzpicture}[anchorbase,smallnodes,scale=.75,yscale=-1]
	\draw[0label] (0,.6) to (0,1.2) node[below]{$0^{(i)}$};
	\draw[0label] (.75,.4) to (.75,1.2) node[below]{$0^{(j)}$};
	\draw[glabel] (0,0) node[above,yshift=-2pt,color=black]{$a^{(s)}$} to (0,.6)
		to (.75,.4) to (.75,0) node[above,yshift=-2pt,color=black]{$a^{(t)}$};
\end{tikzpicture}
\right) =
\xy
(0,0)*{
\begin{tikzpicture}[scale =.75, smallnodes,yscale=-1]
	\draw[glabel] (0,0) node[above,yshift=-2pt,color=black]{$a$} to [out=90,in=180] (.375,.5);
	\draw[glabel] (.375,.5) to [out=0,in=90] (.75,0);
\end{tikzpicture}
};
\endxy
\;\; , \;\;
\Phi \left(
\begin{tikzpicture}[anchorbase,smallnodes,scale=.75]
	\draw[0label] (0,.6) to (0,1.2) node[above,yshift=-2pt]{$0^{(s)}$};
	\draw[0label] (.75,.4) to (.75,1.2) node[above,yshift=-2pt]{$0^{(t)}$};
	\draw[glabel] (0,0) node[below,color=black]{$a^{(i)}$} to (0,.6)
		to (.75,.4) to (.75,0) node[below,color=black]{$a^{(j)}$};
\end{tikzpicture}
\right) =
\xy
(0,0)*{
\begin{tikzpicture}[scale =.75, smallnodes]
	\draw[glabel] (0,0) node[below,yshift=2pt,color=black]{$a$} to [out=90,in=180] (.375,.5);
	\draw[glabel] (.375,.5) to [out=0,in=90] (.75,0);
\end{tikzpicture}
};
\endxy
\]
\end{proof}

\subsection{Closed web evaluation}

In this section we show that every closed web, 
i.e. every endomorphism of the monoidal unit (empty sequence) 
in $\Web(\spn[6])$, is equal to a $\C(q)$-multiple of the empty web.
We begin with the following observation.

\begin{lem}\label{lem:PBW}
Any ladder $L \in \Lad(\gspn[6])$ can be expressed $\C(q)$-linear combination of ladders in which 
no $E$-rungs appear above $F$-rungs.
\end{lem}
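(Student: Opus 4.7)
The plan is to combine two reductions: first use rung explosion \eqref{eq:explosion} to rewrite $L$ as a $\C(q)$-linear combination of ladders whose rungs are labeled only by $1$ or $0'$, then apply a Poincar\'e--Birkhoff--Witt-style rearrangement to migrate every $E$-rung below every $F$-rung. The first step terminates by lexicographic induction on (maximum rung mass, number of rungs attaining that mass), since each application of \eqref{eq:explosion} replaces a rung by a sum of pairs of stacked rungs of strictly smaller individual mass, and the only labels stable under \eqref{eq:explosion} are $1$ and $0'$.

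For the PBW step, I would induct on the multiset
\[
\mathcal{D}(L) \;=\; \{\, d(R_E) : R_E \text{ is an } E\text{-rung of } L \text{ with } d(R_E) > 0 \,\},
\]
where $d(R_E)$ denotes the number of $F$-rungs strictly below $R_E$ in $L$, ordered by the Dershowitz--Manna multiset order on $\N$. If $\mathcal{D}(L) = \emptyset$ then $L$ is already in the desired form. Otherwise one locates an \emph{adjacent} bad pair $(R_F, R_E)$---an $F$-rung immediately below an $E$-rung with no rung strictly between them in height, which exists by a minimal-separation argument---and splits into three cases. If the rungs use disjoint pairs of uprights, the monoidal interchange law swaps them directly. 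If they share a single upright, the rung swap \eqref{eq:swap} applies; when one rung is labeled $0'$ the swap is clean, while when both are labeled $1$ the correction term produces three rungs $E, E, F$ (bottom to top) with the top $F$-rung labeled $0'$. If they share both uprights, the square relation \eqref{eq:square} applies, producing a scalar multiple of the identity together with ladders in which $E$ lies below $F$.

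The main obstacle is verifying that each resulting term strictly decreases $\mathcal{D}$ in the Dershowitz--Manna order---particularly the correction term of \eqref{eq:swap}, whose two new $E$-rungs could a priori \emph{increase} a naive bad-pair count whenever there are $F$-rungs below the substitution site. The multiset measure sidesteps this: writing $k$ for the number of $F$-rungs lying below $R_F$ in $L$, the entry $d(R_E) = k+1$ is removed from $\mathcal{D}$, and each of the two new $E$-rungs contributes an entry of value $k$, because they share the same $F$-rungs below them as did $R_F$ while the newly introduced $0'$-labeled $F$-rung sits \emph{above} them; thus $\{k+1\}$ is replaced by $\{k,k\}$, which is strictly smaller. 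A parallel accounting handles the clean rung swap, the square relation (whose identity term removes $\{k+1\}$ and decrements each entry coming from an $E$-rung above), and the interchange (which replaces $\{k+1\}$ by $\{k\}$). Since $\N$ is well-founded, the Dershowitz--Manna order on its finite multisets is well-founded, so the induction terminates; a final inspection of Appendix \ref{App:Rels} confirms that every right-hand side in the PBW step has rung labels in $\{1,0'\}$, preserving the invariant established in the first reduction.
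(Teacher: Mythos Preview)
Your two-step argument—explosion to $\{1,0'\}$-labeled rungs, then a PBW reordering via \eqref{eq:swap} and \eqref{eq:square}—is exactly the paper's approach. The paper's proof is a two-sentence sketch asserting that each application of \eqref{eq:swap} or \eqref{eq:square} yields ladders ``in which fewer $F$-rungs appear below $E$-rungs''; your Dershowitz--Manna multiset is a genuine sharpening, since (as you correctly observe) the three-rung correction term in \eqref{eq:swap} can \emph{increase} the naive bad-pair count whenever there are $F$-rungs below the substitution site, and it is your replacement of $\{k{+}1\}$ by $\{k,k\}$ that actually makes termination go through.

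One caution about your final sentence: the claim that every right-hand side in Appendix~\ref{App:Rels} has rung labels in $\{1,0'\}$ is not correct as the appendix is written. The square-relation entry with upright labels $(x,a,w)=(1,2,1)$, $(y,b,z)=(2,3,2)$ produces $2$-labeled rungs on the right, as forced by mass conservation against the intermediate upright labels $3$ and $0$ appearing there (a $0'$-labeled rung would violate the vertex conditions). This conflicts with the schematic \eqref{eq:square} in the body, on which the paper's own proof equally relies, so the discrepancy is not unique to your write-up; but if one takes the appendix at face value, that term must be re-exploded into two $E$-rungs and two $F$-rungs, and the extra $F$-rung increments $d(R')$ for every $E$-rung above the site, which your multiset does not absorb. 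Since this appears to be the only such entry in the tables, the cleanest repair is to handle that case separately.
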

\begin{proof}
Repeat use of the run explosion relation \eqref{eq:explosion} 
expresses $L$ as a linear combination of ladders in which 
every rung is $1$- or $0'$-labeled. 
The rung swap relation \eqref{eq:swap} and square relation \eqref{eq:square} can then 
be inductively used to express each ladder in terms of ladders 
in which fewer $F$-rungs appear below $E$-rungs.
\end{proof}

\begin{thm}\label{thm:Empty}
$\End_{\Web(\spn[6])}(\varnothing)=\C(q)$.
\end{thm}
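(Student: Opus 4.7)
The plan is to exploit the ladder framework developed in the preceding subsection. Given a closed web $\mathcal{W} \in \End_{\Web(\spn[6])}(\varnothing)$, I would first invoke Corollary \ref{cor:ladderize} to produce a ladder $L \in \Lad(\gspn[6])$ with $\Phi(L) = \mathcal{W}$. Since $\mathcal{W}$ has empty boundary, the domain and codomain of $L$ must be sequences of objects of the form $0^{(i)}$. Next, Lemma \ref{lem:PBW} expresses $L$ as a $\C(q)$-linear combination of PBW-form ladders, in each of which every rung is labeled $1$ or $0'$ and no $E$-rung sits above an $F$-rung.

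The crux is then to show that every PBW-form ladder whose domain and codomain consist only of $0^{(i)}$'s reduces, modulo the $\Lad(\gspn[6])$ relations, to a scalar multiple of the identity ladder on such a sequence. Once established, applying $\Phi$ yields that $\mathcal{W}$ equals a scalar multiple of the empty web, completing the proof. I would argue this by induction on the number of $1$-labeled rungs. The base case consists of ladders with only $0'$-rungs between $0$-letter uprights, which evaluate to scalars using the appropriate specializations of \eqref{eq:explosion}, \eqref{eq:swap}, and \eqref{eq:square} tabulated in Appendix \ref{App:Rels}. For the inductive step, any $1$-letter introduced on an upright by an $F$-rung must be removed later by a matching $E$-rung before reaching the top $0$-letter boundary; using the rung swap relation \eqref{eq:swap} one slides such a paired $F$-$E$ duo into vertical adjacency on a common pair of uprights, and the square relation \eqref{eq:square} then eliminates the pair in exchange for a scalar times a simpler ladder, plus correction terms to be handled inductively.

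The main obstacle will be proving termination of this reduction, since applying \eqref{eq:square} and then re-PBW-izing the correction terms via Lemma \ref{lem:PBW} may introduce intermediate configurations for which the total rung count does not manifestly decrease. The resolution should be a carefully chosen well-ordering on ladders---likely a lexicographic measure combining the number of $1$-rungs with a secondary quantity such as the total height at which non-trivial rungs occur, or the total mass carried by non-trivial rungs---which strictly decreases at each step. A secondary difficulty is bookkeeping the base case: one must check that the $0'$-only specializations of the relations indeed collapse any ladder of $0'$-rungs between $0^{(i)}$-labeled uprights to a scalar. As a byproduct, this inductive procedure yields an explicit algorithm for evaluating any closed web to an element of $\C(q)$.
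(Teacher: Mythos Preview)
Your outline shares the opening moves with the paper (ladderize via Corollary~\ref{cor:ladderize}, then put into PBW form via Lemma~\ref{lem:PBW}), but the inductive mechanism you propose has a genuine gap.

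First, a minor point: you should record that $\dim \End_{\Web(\spn[6])}(\varnothing) \geq 1$ follows from the existence of $\Psi$; only the upper bound needs work. More seriously, your inductive step does not function as stated. In PBW form the $E$-rungs sit \emph{below} the $F$-rungs, so your picture of ``a $1$-letter introduced by an $F$-rung being removed later by a matching $E$-rung'' has the order reversed. Moreover, the square relation \eqref{eq:square} rewrites an $F$-then-$E$ configuration as a scalar multiple of the identity \emph{plus} $E$-then-$F$ terms; it does not eliminate a pair of $1$-rungs, it merely commutes them. PBW form is exactly the normal form obtained by exhausting such commutations, so once there you cannot simplify further by this route. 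Your worry about termination is therefore not a technicality to be patched with a clever well-ordering---the reduction you describe simply does not decrease any complexity measure.

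The idea you are missing is that the reduction need not take place inside $\Lad(\gspn[6])$ at all: it suffices to replace $L$ by a \emph{different} ladder with the same image under $\Phi$. The paper inducts not on the number of $1$-rungs but on the number $k$ of $0'$ entries in the (co)domain $0'^{\otimes k} \otimes 0^{\otimes \ell}$. After PBW-izing, one checks that every label on the leftmost upright has strictly positive exponent (this uses that the leftmost entry of both domain and codomain is $0'$ and that $E$'s precede $F$'s reading upward). One then simply subtracts $1$ from every exponent on that upright---a move that is \emph{not} a relation in $\Lad(\gspn[6])$, but which preserves the $\Phi$-image since $\Phi$ forgets exponents---and shuffles the resulting $0$ to the far right. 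This drops $k$ by one, and the base case $k=0$ is immediate. Your attempt to stay within the $\Lad$-relations is what blocks you; the key insight is that you may step outside them as long as $\Phi$ is unchanged.
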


\begin{proof}
Existence of the functor 
$\Psi: \Web(\spn[6]) \to \FRep(U_q(\spn[6]))$
implies that 
\[
\dim_{\C(q)}\left( \End_{\Web(\spn[6])}(\varnothing) \right) \geq 1,
\] 
thus it suffices to show the opposite inequality, i.e. that every closed web 
$\mathcal{W} \in \End_{\Web(\spn[6])}(\varnothing)$ is equal to a multiple of the empty web.

Given such $\mathcal{W}$, 
Corollary \ref{cor:ladderize} gives a ladder $\tilde{\mathcal{W}}$ 
with $\Phi(\tilde{\mathcal{W}}) = \mathcal{W}$, 
and further, by pre- and post-composing with $0^{(i)}$-labeled rungs, 
we can assume that 
\[
\tilde{\mathcal{W}} \in \End_{\Lad(\gspn[6])}(0'^{\otimes k}\otimes 0^{\otimes\ell})
\] 
where 
$0'^{\otimes k} = \overbrace{0' \cdots 0'}^k$
and $0^{\otimes\ell} = \overbrace{0 \cdots 0}^l$.
We induct on the minimal such $k$ for which such $\tilde{\mathcal{W}}$ exists, 
noting that all morphisms in 
\[
\End_{\Lad(\gspn[6])}(0 \cdots 0)
\]
are multiples of the identity (i.e. the empty ladder).

For the inductive step, 
suppose that $\tilde{\mathcal{W}} \in \End_{\Lad(\gspn[6])}(0'^{\otimes k}\otimes 0^{\otimes\ell})$ 
for $k \geq 1$. 
Lemma \ref{lem:PBW} then implies that 
\[
\tilde{\mathcal{W}} = \sum_{i=1}^m f_i(q) \tilde{\mathcal{W}}_i
\]
where, in each $\tilde{\mathcal{W}}_i$, no $E$-rungs appear above any $F$-rungs.
Since the first entry of both the domain and codomain are $0'$, 
this implies that every label on the left-most upright of $\tilde{\mathcal{W}}_i$
takes the form $k^{(i)}$ for $i>0$.

Consider the web
$\tilde{\mathcal{W}}^{(-1,\vec{0})}_i \in 
\End_{\Lad(\gspn[6])}(0\otimes 0'^{\otimes {k-1}}\otimes 0^{\otimes\ell})$
that is obtained from the web $\tilde{\mathcal{W}}_i$ 
by replacing every label $k^{(i)}$ on the left-most upright by the label $k^{(i-1)}$. 
Finally, let 
\[
\tilde{\tilde{W_i}} \in \End_{\Lad(\gspn[6])}(0'^{\otimes k-1}\otimes 0^{\otimes\ell+1})
\]
be obtained from $\tilde{\mathcal{W}}^{(-1,\vec{0})}_i$ by repeatedly pre- and post-composing 
with rungs of the form 
\[
\begin{tikzpicture}[anchorbase]
	\draw[0label] (0,0) to (0,.5);
	\draw[0label] (.75,1) to (.75,1.5);
	\draw[0label] (0,.5) to (.75,1);
\end{tikzpicture}
\quad \text{and} \quad
\begin{tikzpicture}[anchorbase,xscale=-1]
	\draw[0label] (0,0) to (0,.5);
	\draw[0label] (.75,1) to (.75,1.5);
	\draw[0label] (0,.5) to (.75,1);
\end{tikzpicture}
\]
respectively. 
Note that 
$\Phi(\tilde{\tilde{W_i}}) = \Phi(\tilde{\mathcal{W}}^{(-1,\vec{0})}_i )
= \Phi(\tilde{\mathcal{W}}_i)$.
We then have 
\[
\mathcal{W} = \Phi(\tilde{W}) = \sum_{i=1}^m \Phi(\tilde{\mathcal{W}}_i)
=\sum_{i=1}^m \Phi(\tilde{\tilde{W_i}}) \in \C(q)
\]
as desired.
\end{proof}

Theorem \ref{thm:Empty} can be equivalently formulated as saying that
the linear map
\[
\End_{\Lad(\gspn[6])}(\varnothing) \to \End_{\Web(\spn[6])}(\varnothing)
\]
is surjective. 
In fact, a variation of the above proof gives the following.

\begin{prop}\label{prop:Surjective}
Let $k_i \in \{1,2,3\}$ for $1 \leq i \leq m$, then 
\[
\End_{\Lad(\gspn[6])}(k_1\cdots k_m) \to \End_{\Web(\spn[6])}(k_1\cdots k_m)
\]
is surjective.
\end{prop}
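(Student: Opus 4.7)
The plan is to replay the inductive scheme used in the proof of Theorem \ref{thm:Empty}, with the sequence $k_1 \cdots k_m$ now playing the role that the empty sequence played there. Given $\mathcal{W} \in \End_{\Web(\spn[6])}(k_1 \cdots k_m)$, Corollary \ref{cor:ladderize} produces a ladder $L$ in $\Lad(\gspn[6])$ with $\Phi(L) = \mathcal{W}$. Its domain and codomain are sequences mapping to $k_1 \cdots k_m$ under $\Phi$, so they are obtained from $k_1 \cdots k_m$ by replacing each $k_i$ with some $k_i^{(j_i)}$ and interspersing $0^{(j)}$-labels. Using the column-padding trick from Lemma \ref{lem:ladderComp}, I can bring $L$ into the form $\tilde L \in \End_{\Lad(\gspn[6])}(k_1^{(j_1)} \cdots k_m^{(j_m)} \otimes 0'^{\otimes a} \otimes 0^{\otimes b})$ with $\Phi(\tilde L) = \mathcal{W}$.

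I then induct on $N := a + j_1 + \cdots + j_m$. When $N = 0$, the ladder lies in $\End_{\Lad(\gspn[6])}(k_1 \cdots k_m \otimes 0^{\otimes b})$, and the trailing $0^{\otimes b}$ columns can be stripped off just as in the base case of the proof of Theorem \ref{thm:Empty} (they form an independent subladder valued in scalars via the empty-sequence evaluation), yielding an element of $\End_{\Lad(\gspn[6])}(k_1 \cdots k_m)$ whose image under $\Phi$ is $\mathcal{W}$.

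For the inductive step ($N > 0$), apply Lemma \ref{lem:PBW} to write $\tilde L = \sum_i f_i(q) \tilde L_i$ with no $E$-rungs above $F$-rungs in each $\tilde L_i$. Since $N > 0$, there is at least one $0'$-labeled upright in the domain and codomain; fix one such. By the same mass and vertex analysis used in the proof of Theorem \ref{thm:Empty}, every label along this upright within each $\tilde L_i$ takes the form $k^{(j)}$ with $j > 0$. Decrementing the prime count on each of these labels by one, and compensating with an additional $0$-column via the cup/cap rungs as in that proof, produces a ladder $\tilde L_i^{(-1)}$ in $\End_{\Lad(\gspn[6])}(k_1^{(j_1)} \cdots k_m^{(j_m)} \otimes 0'^{\otimes a-1} \otimes 0^{\otimes b+1})$ with $\Phi(\tilde L_i^{(-1)}) = \Phi(\tilde L_i)$, so the induction hypothesis applies.

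The main obstacle will be adapting the mass/vertex analysis from Theorem \ref{thm:Empty} to the case where the chosen $0'$-column need no longer be the leftmost upright, but may sit between the external $k_i^{(j_i)}$-columns and the $0^{\otimes b}$ block. This requires examining rungs on both sides of the column, but the key constraints --- that the column starts and ends at mass $2$, that all rungs have positive mass, and that the vertex conditions restrict which color transitions are allowed --- are entirely local and survive unchanged, so the analysis goes through with only cosmetic modification. A secondary bookkeeping challenge is the initial transformation of $L$ into the standard form $\tilde L$, which requires repeated application of the zero-padding manipulations from Lemma \ref{lem:ladderComp}; this is mechanical but requires some care to maintain ladder validity at each intermediate stage.
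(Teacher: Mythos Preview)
Your inductive scheme has a genuine gap in the step where you assert that, after applying Lemma~\ref{lem:PBW}, every label along the chosen upright has positive prime count. This claim is \emph{not} a cosmetic variant of the analysis in Theorem~\ref{thm:Empty}: that analysis works only for the \emph{leftmost} upright. For the leftmost column, the only rungs touching it lie to its right, so under the ``no $E$ above $F$'' ordering the mass profile is unimodal (first increasing via $E$-rungs, then decreasing via $F$-rungs), and the vertex/mass case-check shows the prime never drops below $1$. For a middle column this fails outright: an $E$-rung immediately to its \emph{left} removes mass from it, and starting from $0' = 0^{(1)}$ one such rung with label $1$ already takes the column to $1^{(0)}$ (lower-vertex condition with $b=0$ forces $c=y$; mass $2 = 1 + 1$ gives $t = r = 0$). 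Your $0'$-columns sit to the right of the $k_i^{(j_i)}$'s, so they are never leftmost, and the decrement step is illegitimate. A symmetric ``rightmost'' argument does not rescue this either, since on the rightmost column the mass decreases first under $E$-rungs and the same transition $0' \to 1^{(0)}$ occurs.

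There is also a smaller gap: when $a = 0$ but some $j_i > 0$, there is no $0'$-labelled upright at all, so your inductive step does not apply as stated. The paper avoids both problems by arranging the (co)domain as $0'^{\otimes k} \otimes k_1 \cdots k_m \otimes 0^{\otimes \ell}$ with all $j_i = 0$ and the $0'$-block on the \emph{left}; the leftmost-column argument then kills the $0'$'s exactly as in Theorem~\ref{thm:Empty}, and a separate (and easier) observation handles the trailing $0^{\otimes r}$: after a further application of Lemma~\ref{lem:PBW}, any rung touching a rightmost mass-$0$ column is impossible, so the ladder splits as $\widehat{\widehat{\mathcal W}}_j \otimes \id_{0^{\otimes r}}$. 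Reorganising your normal form in this way fixes the argument.
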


\begin{proof}
The argument closely parallels the proof of Theorem \ref{thm:Empty}.
Let $\mathcal{W} \in \End_{\Web(\spn[6])}(k_1\cdots k_m)$ be a web, 
then we can find $\tilde{\mathcal{W}} \in \End_{\Lad(\gspn[6])}( 0'^{\otimes k}\otimes k_1\cdots k_m \otimes 0^{\otimes\ell})$ 
so that $\Phi(\tilde{\mathcal{W}}) = \mathcal{W}$. 
Again using Lemma \ref{lem:PBW}, we can write 
\[
\tilde{\mathcal{W}} = \sum_{i=1}^m f_i(q) \tilde{\mathcal{W}}_i
\]
with no $E$-rungs appearing above any $F$-rungs in each $\tilde{\mathcal{W}}_i$.
The same procedure as above then implies that 
\[
\mathcal{W} =  \sum_{i=1}^m f_i(q) \Phi(\tilde{\tilde{\mathcal{W}}}_i)
\]
with each $\tilde{\tilde{\mathcal{W}}}_i \in \End_{\Lad(\gspn[6])}( 0'^{\otimes k-1}\otimes k_1\cdots k_m \otimes 0^{\otimes \ell+1})$. 
Repeating this, we find that
\[
\mathcal{W} = \sum_{j=1}^p g_j(q) \Phi(\widehat{\mathcal{W}}_j)
\]
with each $\widehat{\mathcal{W}}_j \in \End_{\Lad(\gspn[6])}(k_1\cdots k_m \otimes 0^{\otimes r})$.
Further, again using Lemma \ref{lem:PBW}, we can assume that in each $\widehat{\mathcal{W}}_j$ 
has no $E$-rungs appearing above any $F$-rungs. 
However, this then implies that the webs $\widehat{\mathcal{W}}_j$ take the form
\[
\widehat{\mathcal{W}}_j = \widehat{\widehat{\mathcal{W}}}_j \otimes \id_{0^{\otimes r}}
\]
with $\widehat{\widehat{\mathcal{W}}}_j \in \End_{\Lad(\gspn[6])}(k_1\cdots k_m)$, 
and the result follows since
\[
\mathcal{W} = \sum_{j=1}^p g_j(q) \Phi(\widehat{\widehat{\mathcal{W}}}_j).
\]
\end{proof}

Proposition \ref{prop:Surjective} suggests a clear strategy for the resolution 
of Conjecture \ref{conj}: we should deduce the bound in equation \eqref{eq:HomDim} 
by finding explicit bases for the endomophism algebras in $\Lad(\gspn[6])$, 
and comparing dimensions to those in $\Rep(U_q(\spn[6]))$. 
This approach will be pursued in follow-up work \cite{BELR}.

\begin{cor}
For all objects $\vec{k},\vec{\ell}$ in $\Web(\spn[6])$, 
the $\C(q)$-vector space $\Hom_{\Web(\spn[6])}(\vec{k},\vec{\ell})$ 
is finite-dimensional.
\end{cor}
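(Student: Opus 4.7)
The plan is to reduce, via two steps, to showing that $\End_{\Web(\spn[6])}(1^{\otimes n})$ is finite-dimensional for each $n \geq 0$, and then to attack the latter via the ladder presentation. The reduction to $\Hom$-spaces with all entries labeled $1$ is immediate from the proof of Theorem \ref{thm:Full}: the webs $\mathcal{W}_b$ and $\mathcal{W}_t$ constructed there make
\[
\Hom_{\Web(\spn[6])}(1^{\otimes \sum k_i}, 1^{\otimes \sum \ell_j})
\xrightarrow{\mathcal{W}_t \circ (-) \circ \mathcal{W}_b}
\Hom_{\Web(\spn[6])}(\vec{k},\vec{\ell})
\]
surjective for any objects $\vec{k},\vec{\ell}$. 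The pivotal structure on $\Web(\spn[6])$ then provides isomorphisms
$\Hom(1^{\otimes r},1^{\otimes s}) \cong \End(1^{\otimes (r+s)/2})$
whenever $r+s$ is even (and the space is zero otherwise), so it suffices to bound $\dim \End_{\Web(\spn[6])}(1^{\otimes n})$.

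For this, Proposition \ref{prop:Surjective} provides a surjection
$\End_{\Lad(\gspn[6])}(1^{\otimes n}) \twoheadrightarrow \End_{\Web(\spn[6])}(1^{\otimes n})$,
so I would then show the left-hand side is finite-dimensional. By Lemma \ref{lem:PBW}, this endomorphism algebra is spanned by \emph{normal-form} ladders, i.e.\ those with no $E$-rung appearing above any $F$-rung. Every such normal-form endomorphism of $1^{\otimes n}$ factors as $L_F \circ L_E$, where $L_E\colon 1^{\otimes n} \to \vec{m}$ is built only from $E$-rungs and $L_F\colon \vec{m} \to 1^{\otimes n}$ is built only from $F$-rungs, for some intermediate object $\vec{m}$ of total mass $n$. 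Since each upright of $\vec{m}$ then carries mass at most $n$ and only finitely many labels $\ell^{(i)}$ correspond to each mass value, the set of admissible intermediate objects $\vec{m}$ is finite.

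To bound the lengths of $L_E$ and $L_F$, introduce the ``leftness invariant'' $I(\vec{m}) := \sum_{i=1}^{n} i \cdot \mu(m_i)$ on mass-$n$ objects, which takes values in the finite range $[n,n^2]$. Because an $E$-rung transfers positive mass from one upright to an adjacent upright on its left (and an $F$-rung does the reverse), $I$ strictly decreases across each $E$-rung and strictly increases across each $F$-rung, in each case by $\mu(c^{(r)}) \geq 1$. Starting and ending at $I(1^{\otimes n}) = n(n+1)/2$, this bounds the combined number of rungs in any normal-form ladder by a function of $n$ (e.g.\ $n(n-1)$). Since each rung has only finitely many choices of label and position among the $n-1$ adjacent pairs of uprights, there are only finitely many normal-form ladders in $\End_{\Lad(\gspn[6])}(1^{\otimes n})$, and the proof is complete.

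The main obstacle will be a careful verification that the monovariant $I$ strictly changes across every generating rung; this hinges on the ``Rung mass'' condition $\mu(c^{(r)}) > 0$ built into the definition of $\Lad(\gspn[6])$ and on the fact that each rung connects adjacent uprights. A secondary point is ensuring that Proposition \ref{prop:Surjective} genuinely yields endomorphisms of the unprimed object $1^{\otimes n}$, as opposed to objects with residual $0^{(i)}$ entries; this is already handled in the iterated construction at the end of the proof of that proposition. Neither step is difficult, but both should be checked against the full list of admissible rung shapes recorded in Appendix \ref{App:Rels}.
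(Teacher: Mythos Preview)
Your proposal is correct and follows essentially the same route as the paper: reduce to $\End_{\Web(\spn[6])}(1^{\otimes n})$ via the maps $\mathcal{W}_t \circ (-) \circ \mathcal{W}_b$ and pivotal duality, then invoke Proposition~\ref{prop:Surjective} and Lemma~\ref{lem:PBW}. The only difference is that where the paper simply asserts ``It is easy to see that there are only finitely many such ladders,'' you supply an explicit monovariant $I(\vec{m}) = \sum_i i\,\mu(m_i)$ to bound the rung count; this is a clean way to cash out that sentence and is compatible with the paper's conventions (E-rungs move mass leftward, F-rungs rightward, and the rung-mass condition $\mu(c^{(r)})>0$ guarantees strict monotonicity).
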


\begin{proof}
Recall from the proof of Theorem \ref{thm:Full} that there is a surjective linear map
\[
\Hom_{\Web(\spn[6])}(1^{\otimes \sum k_i} , 1^{\otimes \sum \ell_j})
\xrightarrow{\mathcal{W}_t \circ ( - ) \circ \mathcal{W}_b} 
\Hom_{\Web(\spn[6])}(\vec{k}, \vec{\ell}).
\]
and an isomorphism
\[
\Hom_{\Web(\spn[6])}(1^{\otimes \sum k_i} , 1^{\otimes \sum \ell_j}) \cong 
\End_{\Web(\spn[6])}(1^{\otimes \frac{1}{2}(\sum k_i + \sum \ell_j)}).
\]
The result then follows from Proposition \ref{prop:Surjective}, 
since Lemma \ref{lem:PBW} shows that 
$\End_{\Lad(\gspn[6])}(1^{\otimes \frac{1}{2}(\sum k_i + \sum \ell_j)})$ is spanned by ladders 
in which no $E$-rungs appear above any $F$-rungs. 
It is easy to see that there are only finitely many such ladders.
\end{proof}

\subsection{Decategorification of $\Web(\spn[6])$}

In this section, we compute a ``decategorification'' of $\Web(\spn[6])$. 
Typically, the decategorification of an abelian or additive category $\mathcal{C}$ is 
taken to be the Grothendieck group $K_0(\mathcal{C})$, 
i.e. the quotient of the free abelian group on 
the isomorphism classes $[X]$ of objects $X \in Ob(\mathcal{C})$
by relations corresponding to an appropriate notion of exact sequence.
Unfortunately, computing $K_0$ for (the additive closure of) diagrammatic categories 
such as $\Web(\spn[6])$ is typically a difficult endeavor, 
requiring detailed knowledge of the endomorphism algebras therein.

In \cite{BGHL}, the following is proposed as an alternative notion of 
decategorification.

\begin{defn}
Let $\K$ be a field and let $\mathcal{C}$ be a $\K$-linear category.
The categorical trace of $\mathcal{C}$ is the $\K$-vector space
\[
\Tr(\mathcal{C}):= 
\bigoplus_{X\in Ob(\mathcal{C})} \End_{\mathcal{C}}(X) \Big/ \big( fg-gf \big)
\]
where $f\in \Hom_{\mathcal{C}}(X,Y)$ and 
$g\in \Hom_{\mathcal{C}}(Y,X)$ range over all $X,Y\in Ob(\mathcal{C})$.
\end{defn}

We will denote the equivalence class in $\Tr(\mathcal{C})$ of an
endomorphism $h \in \End_{\mathcal{C}}(X)$ by $\tr(h)$.
We now record some standard facts about the categorical trace:
\begin{itemize}

\item $\Tr(\operatorname{Kar}(\mathcal{C})) 
= \Tr(\mathcal{C})$ where $\operatorname{Kar}(\mathcal{C})$ 
denotes the Karoubi (i.e. idempotent) completion of $\mathcal{C}$.

\item If $\mathcal{C}$ is additive linear, 
there is a generalized Chern character map 
$\mathcal{X}: K_0(\mathcal{C}) \to \Tr(\mathcal{C})$ defined by 
$\mathcal{X}([X]) = \tr(\id_X)$.

\item If $\mathcal{C}$ is semisimple, then $\mathcal{X}$ induces an isomorphism
$\K \otimes_{\Z} K_0(\mathcal{C}) \to \Tr(\mathcal{C})$.

\item If $\mathcal{C}$ is (braided) monoidal, 
then $\Tr(\mathcal{C})$ is a (commutative) algebra, with product induced via tensor product, 
and $\mathcal{X}$ is an algebra homomorphism.
\end{itemize}

Combining these facts, we observe that
\begin{equation}\label{eq:TrIso}
\Tr(\FRep(U_q(\spn[6]))) \cong \Tr(\Rep(U_q(\spn[6]))) \cong 
\C(q) \otimes_{\Z} K_0(\Rep(U_q(\spn[6])))
\cong \C(q)[\chi_1, \chi_2,\chi_3]
\end{equation}
where we have used the standard identification of 
the representation ring $K_0(\Rep(U_q(\spn[6])))$ with polynomials 
in the classes (or characters) $\chi_i$ of the fundamental representations. 
We now prove the analogue of this result for $\Web(\spn[6])$
using the fact that the categorical trace is well-suited to diagrammatically 
presented pivotal categories. 
Indeed, in this case it can be identified with the vector space of all annular closures of 
endomorphisms, modulo isotopy in the annulus and local relations 
(now applied in any disk in the annulus). 
We employ this identification in the following.

\begin{thm}\label{thm:traceFRep}
There is an isomorphism of algebras
\[ 
\tau: \C(q)[\chi_1, \chi_2,\chi_3] \to \Tr(\Web(\spn[6]))
\]
determined by $\tau(\chi_i) = \tr(\id_i)$
\end{thm}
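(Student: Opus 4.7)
The plan is to build $\tau$ via a universal property, deduce injectivity from the functor $\Psi$, and establish surjectivity via a ladder-based reduction in the annulus, paralleling the proof of Theorem \ref{thm:Empty}. Since $\Web(\spn[6])$ is braided monoidal, $\Tr(\Web(\spn[6]))$ is a commutative $\C(q)$-algebra, so the universal property of polynomial rings yields a unique algebra homomorphism $\tau$ with $\tau(\chi_i)=\tr(\id_i)$. Injectivity then follows by composing with the functorially induced homomorphism
\[
\Tr(\Psi): \Tr(\Web(\spn[6])) \to \Tr(\FRep(U_q(\spn[6]))) \cong \C(q)[\chi_1,\chi_2,\chi_3],
\]
where the last identification is \eqref{eq:TrIso}, sending $\tr(\id_V), \tr(\id_W), \tr(\id_U)$ to $\chi_1, \chi_2, \chi_3$. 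Because $\Psi$ sends $i\in\{1,2,3\}$ to the corresponding fundamental representation, $\Tr(\Psi)\circ\tau$ agrees with the identity on generators, hence is the identity of $\C(q)[\chi_1,\chi_2,\chi_3]$, forcing $\tau$ to be injective.

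For surjectivity, I identify $\Tr(\Web(\spn[6]))$ with the $\Web(\spn[6])$-skein module of the annulus, so every class is represented by a $\C(q)$-linear combination of annular closed webs, modulo planar isotopy in the annulus and local $\Web(\spn[6])$-relations. It then suffices to show that every annular closed web reduces to a $\C(q)$-linear combination of disjoint unions of essential circles labeled by $\{1,2,3\}$, since such nested configurations (with nesting inducing the trace product) are exactly the monomials in $\tr(\id_1),\tr(\id_2),\tr(\id_3)$. I adapt the disk strategy from Theorem \ref{thm:Empty}: given an annular web $\mathcal{W}$, open the annulus to a strip, apply Corollary \ref{cor:ladderize} to realize the result as $\Phi(L)$ for some $L\in\End_{\Lad(\gspn[6])}(\vec{k})$, and close up to conclude $[\mathcal{W}]=\tr(\Phi(L))$. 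Lemma \ref{lem:PBW} then writes $L$ as a linear combination of ladders in which all $E$-rungs lie above all $F$-rungs. Using cyclic invariance of $\tr$ in the annulus to bring an $E$-rung into adjacency with an $F$-rung, the square relation \eqref{eq:square} and rung swap \eqref{eq:swap} produce terms with strictly fewer rungs, and iterating yields rungless annular ladders, which are precisely nested essential circles.

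The hard part will be guaranteeing termination of this final reduction. In the disk case of Theorem \ref{thm:Empty}, the induction variable was the number of $0'$-entries in the boundary sequence, a resource unavailable in the annulus. I expect to need a monovariant combining total rung mass with a cyclic positional measure (e.g., after choosing a cut, the minimal distance around the annulus between the bottom $E$-rung and the top $F$-rung) that strictly decreases under each application of \eqref{eq:square} or \eqref{eq:swap}. Formulating this monovariant precisely, verifying that at least one such simplification is always available while rungs remain, and checking strict decrease at each step will be the principal technical content of the proof.
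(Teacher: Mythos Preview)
Your definition of $\tau$ and injectivity argument are exactly the paper's. The real gap is in surjectivity: your claim that the square relation \eqref{eq:square} and rung swap \eqref{eq:swap} ``produce terms with strictly fewer rungs'' is false. The square relation rewrites an $FE$ pair as $f(q)\cdot\id$ plus $EF$ terms with the \emph{same} rung count, and the rung swap never removes rungs (in the $1,1$ case it even produces a three-rung term from two). So iterating your procedure need not terminate, and you correctly flag this as the hard part---but the monovariant you sketch (``total rung mass plus a cyclic positional measure'') is not the right one. The preliminary PBW step you propose is also unnecessary.

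The paper's missing idea is to track horizontal \emph{slices} rather than rungs. After exploding to $1$- and $0'$-rungs, assign to each height between consecutive rungs the mass tuple $\vec{\mu}\in\N^m$ of the uprights; this gives a cyclic sequence $\vec{\mu}_1,\ldots,\vec{\mu}_{\#r(L)}$, all with the same coordinate sum $\Tot_\mu(L)$. Pick a slice where this tuple is lexicographically minimal. Since an $F$-rung (moving mass rightward) strictly decreases the lex order going up and an $E$-rung strictly increases it, minimality forces the rung just below that slice to be an $F$-rung and the one just above to be an $E$-rung---whether on the same pair of uprights (square case) or adjacent pairs (rung swap case). Applying the corresponding relation deletes that minimal slice and replaces it only by slices with strictly larger first coordinate, hence strictly lex-larger tuples; all other slices are unchanged. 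Thus in every resulting term either the lex-minimal tuple strictly increases or its multiplicity drops by one. Since $\Tot_\mu(L)$ is preserved and bounds the entries, the lex-minimal tuple is bounded above, so the process terminates in rungless (identity) ladders.
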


\begin{proof}
Since $\Tr(\Web(\spn[6]))$ is a commutative algebra, 
the assignment $\tau(\chi_i) = \tr(\id_i)$ determines an algebra homomorphism 
$\tau: \C(q)[\chi_1, \chi_2,\chi_3] \to \Tr(\Web(\spn[6]))$.
Further, injectivity of $\tau$ follows from the commutative diagram
\[
\begin{tikzcd}
\C(q)[\chi_1, \chi_2,\chi_3] \ar[r,"\tau"] \ar[d,"\cong"] & \Tr(\Web(\spn[6])) \ar[d,"\Tr(\Psi)"] \\
K_0(\Rep(U_q(\spn[6]))) \ar[r,"\cong"]& \Tr(\FRep(U_q(\spn[6])))
\end{tikzcd}
\]
where the composition of the indicated isomorphisms is the inverse of the isomorphism 
given in \eqref{eq:TrIso}.

It remains to show surjectivity, 
i.e. that every class in $\Tr(\Web(\spn[6]))$ can be expressed as 
a linear combination of classes of identity morphisms.
First, note that the functor 
$\Phi: \Lad(\gspn[6]) \to \Web(\spn[6])$ induces a homomorphism 
$\Tr(\Phi): \Tr(\Lad(\gspn[6])) \to \Tr(\Web(\spn[6]))$ 
and Corollary \ref{cor:ladderize} implies that $\Tr(\Phi)$ is surjective. 
It thus suffices to show that every element in $\Tr(\Lad(\gspn[6]))$ 
can be expressed as a linear combination of classes of identity morphisms. 
To do so, we adapt the argument from \cite[Theorem 3.2]{QRS} to our setting. 

To this end, let $L \in \End_{\Lad(\gspn[6])}(a_1^{i_1}\cdots a_m^{i_m})$ be a ladder. 
Using the rung explosion relation \eqref{eq:explosion}, we can assume that all rungs 
are $1$- and $0'$-labeled, and further we can assume that all rungs appear at distinct heights.
The element $\tr(L)$ then corresponds to a ``cyclic sequence'' 
$\vec{\mu}_1, \ldots,\vec{\mu}_{\#r(L)}$ of tuples $\vec{\mu}_i \in \N^m$
that is obtained by taking a horizontal slice in between the rungs of $\tr(L)$, 
and recording the masses of the corresponding labels. 
Here, $\#r(L)$ denotes the 
number\footnote{If $\#r(L)=0$, the cyclic sequence only has one entry, 
thus we slightly abuse notation.} 
of rungs in $L$. 
For example, 
taking a slice at the ``bottom'' (or equivalently, the ``top'') of $\tr(L)$ gives the tuple
\[
\vec{\mu} = (\mu(a_1^{i_1}), \ldots, \mu(a_m^{i_m}))
\]
Note that the sum of the entries in the tuple $\vec{\mu}_i$ is independent 
of the choice of $1 \leq i \leq \#r(L)$, hence determines an invariant of 
$\tr(L)$ that we denote by $\Tot_{\mu}(L)$.

Now, suppose $\#r(L) \geq 1$ 
and consider a tuple in $\vec{\mu}_1, \ldots,\vec{\mu}_{\#r(L)}$ 
that is minimal with respect to the lexicographic order on $\N^m$.
It follows that the rung immediately ``below'' the corresponding slice 
in $\tr(L)$ is an $F$-rung, 
and the rung immediately ``above'' is an $E$-rung. 
We thus can apply a rung swap \eqref{eq:swap} or square \eqref{eq:square} 
relation to express $\tr(L)$ as a linear combination of classes of ladders 
in which the minimal tuple is strictly larger than that in $\tr(L)$, 
or in which it appears a fewer number of times.

Repeat application of this procedure expresses $\tr(L)$ 
as a linear combination of ladders without rungs, 
i.e. identity ladders. This follows from the following observations:
\begin{itemize}
\item We can apply the above procedure, provided $\#r(L) \geq 1.$
\item If $\#r(L)=0$, then $L$ is an identity ladder.
\item For a fixed value of $\Tot_{\mu}(L)$ 
(which remains constant when we apply rung swap and square relations), 
for minimal $\vec{\mu}_i$ sufficiently large, we must have $\#r(L)=0$.
\end{itemize}
\end{proof}

\section{$\spn[6]$ link invariants}\label{sec:Links}

Our results thus far assemble to give an explicit description of the colored 
$U_q(\spn[6])$ invariant of framed links. 
Recall that in the uncolored case, 
this link invariant can be recovered skein-theoretically as an 
appropriate evaluation of the $2$-variable Kauffman polynomial \cite{Kauff2}, 
or using the $n=3$ case of the state-sum model from \cite{MOSpn}. 
We emphasize that our construction has the following important features, 
that are not present in those formulations:
\begin{itemize}
	\item It describes the colored $U_q(\spn[6])$ link invariant, 
where link components are colored by fundamental representations. 
Further, there should exist Jones-Wenzl-like recursions for highest weight 
projectors in $\Web(\spn[6])$ that extend this invariant to links with 
components colored by arbitrary irreducible representations.
	\item It is local, i.e. it assigns a linear combination of $\spn[6]$ webs to 
tangles, as well as links. The invariant of a link can thus be computed 
via the ``divide and conquer'' approach described in \cite{BN3}, 
i.e. by splitting the link into constituent tangles, computing and simplifying 
the invariant of these tangles, then assembling the link invariant from 
these constituent pieces. See \emph{loc. cit.} for a discussion of 
the efficiency of this approach.
\end{itemize}

We now describe the link invariant. 
First, straightforward (but tedious!) computations show that the crossing formulae 
from equations \eqref{eq:braiding} and \eqref{eq:coloredbraiding}
are explicitly given as follows:

\begin{equation}\label{eq:allcross}
\begin{gathered}
\xy
(0,0)*{
\begin{tikzpicture}[scale=.4]
	\draw[1label] (1,-1) to (-1,1);
	\draw[overcross] (-1,-1) to (1,1);
	\draw[1label] (-1,-1) to (1,1);
\end{tikzpicture}
};
\endxy
=q\,
\xy
(0,0)*{
\begin{tikzpicture}[scale=.4]
	\draw[1label] (-1,-1) to [out=45,in=-90] (-.5,0);
	\draw[1label] (-.5,0) to [out=90,in=-45] (-1,1);
	\draw[1label] (1,-1) to [out=135,in=-90] (.5,0);
	\draw[1label] (.5,0) to [out=90,in=-135] (1,1);
\end{tikzpicture}
};
\endxy
+\frac{q^{-3}}{[3]}\,
\xy
(0,0)*{
\begin{tikzpicture}[scale=.4,rotate=90]
	\draw[1label] (-1,-1) to [out=45,in=-90] (-.5,0);
	\draw[1label] (-.5,0) to [out=90,in=-45] (-1,1);
	\draw[1label] (1,-1) to [out=135,in=-90] (.5,0);
	\draw[1label] (.5,0) to [out=90,in=-135] (1,1);
\end{tikzpicture}
};
\endxy
-\frac{1}{[3]}\,
\xy
(0,0)*{
\begin{tikzpicture}[scale=.4]
	\draw[1label] (-1,-1) to (0,-.5);
	\draw[1label] (1,-1) to (0,-.5);
	\draw[2label] (0,-.5) to (0,.5);
	\draw[1label] (0,.5) to (-1,1);
	\draw[1label] (0,.5) to (1,1);
\end{tikzpicture}
};
\endxy\\
\xy
(0,0)*{
\begin{tikzpicture}[scale=.4]
	\draw[2label] (1,-1) to (-1,1);
	\draw[overcross] (-1,-1) to (1,1);
	\draw[1label] (-1,-1) to (1,1);
\end{tikzpicture}
};
\endxy
=
\frac{1}{[2]}\,
\xy
(0,0)*{
\begin{tikzpicture}[scale=.4]
	\draw[1label] (-1,-1) to (0,-.5);
	\draw[2label] (1,-1) to (0,-.5);
	\draw[3label] (0,-.5) to (0,.5);
	\draw[2label] (0,.5) to (-1,1);
	\draw[1label] (0,.5) to (1,1);
\end{tikzpicture}
};
\endxy
- \frac{q}{[3]}\,
\xy
(0,0)*{
\begin{tikzpicture}[scale=.4,rotate=90]
	\draw[2label] (-1,-1) to (0,-.5);
	\draw[1label] (1,-1) to (0,-.5);
	\draw[1label] (0,-.5) to (0,.5);
	\draw[1label] (0,.5) to (-1,1);
	\draw[2label] (0,.5) to (1,1);
\end{tikzpicture}
};
\endxy
-\frac{q^{-2}}{[2][3]}\,
\xy
(0,0)*{
\begin{tikzpicture}[scale=.4]
	\draw[1label] (-1,-1) to (0,-.5);
	\draw[2label] (1,-1) to (0,-.5);
	\draw[1label] (0,-.5) to (0,.5);
	\draw[2label] (0,.5) to (-1,1);
	\draw[1label] (0,.5) to (1,1);
\end{tikzpicture}
};
\endxy
\quad , \quad
\xy
(0,0)*{
\begin{tikzpicture}[scale=.4]
	\draw[1label] (1,-1) to (-1,1);
	\draw[overcross] (-1,-1) to (1,1);
	\draw[2label] (-1,-1) to (1,1);
\end{tikzpicture}
};
\endxy
=
\frac{1}{[2]}\,
\xy
(0,0)*{
\begin{tikzpicture}[scale=.4,xscale=-1]
	\draw[1label] (-1,-1) to (0,-.5);
	\draw[2label] (1,-1) to (0,-.5);
	\draw[3label] (0,-.5) to (0,.5);
	\draw[2label] (0,.5) to (-1,1);
	\draw[1label] (0,.5) to (1,1);
\end{tikzpicture}
};
\endxy
- \frac{q}{[3]}\,
\xy
(0,0)*{
\begin{tikzpicture}[scale=.4,rotate=90,xscale=-1]
	\draw[2label] (-1,-1) to (0,-.5);
	\draw[1label] (1,-1) to (0,-.5);
	\draw[1label] (0,-.5) to (0,.5);
	\draw[1label] (0,.5) to (-1,1);
	\draw[2label] (0,.5) to (1,1);
\end{tikzpicture}
};
\endxy
-\frac{q^{-2}}{[2][3]}\,
\xy
(0,0)*{
\begin{tikzpicture}[scale=.4,xscale=-1]
	\draw[1label] (-1,-1) to (0,-.5);
	\draw[2label] (1,-1) to (0,-.5);
	\draw[1label] (0,-.5) to (0,.5);
	\draw[2label] (0,.5) to (-1,1);
	\draw[1label] (0,.5) to (1,1);
\end{tikzpicture}
};
\endxy \\
\xy
(0,0)*{
\begin{tikzpicture}[scale=.4]
	\draw[2label] (1,-1) to (-1,1);
	\draw[overcross] (-1,-1) to (1,1);
	\draw[2label] (-1,-1) to (1,1);
\end{tikzpicture}
};
\endxy
=\frac{q^4}{[3]}
\xy
(0,0)*{
\begin{tikzpicture}[scale=.4]
	\draw[2label] (-1,-1) to [out=45,in=-90] (-.5,0);
	\draw[2label] (-.5,0) to [out=90,in=-45] (-1,1);
	\draw[2label] (1,-1) to [out=135,in=-90] (.5,0);
	\draw[2label] (.5,0) to [out=90,in=-135] (1,1);
\end{tikzpicture}
};
\endxy
+\frac{q^{-4}}{[3]}
\xy
(0,0)*{
\begin{tikzpicture}[scale=.4,rotate=90]
	\draw[2label] (-1,-1) to [out=45,in=-90] (-.5,0);
	\draw[2label] (-.5,0) to [out=90,in=-45] (-1,1);
	\draw[2label] (1,-1) to [out=135,in=-90] (.5,0);
	\draw[2label] (.5,0) to [out=90,in=-135] (1,1);
\end{tikzpicture}
};
\endxy
-\frac{q}{[3]}
\xy
(0,0)*{
\begin{tikzpicture}[scale=.4,rotate=90]
	\draw[2label] (-1,-1) to (0,-.5);
	\draw[2label] (1,-1) to (0,-.5);
	\draw[2label] (0,-.5) to (0,.5);
	\draw[2label] (0,.5) to (-1,1);
	\draw[2label] (0,.5) to (1,1);
\end{tikzpicture}
};
\endxy
-\frac{q^{-1}}{[3]}
\xy
(0,0)*{
\begin{tikzpicture}[scale=.4]
	\draw[2label] (-1,-1) to (0,-.5);
	\draw[2label] (1,-1) to (0,-.5);
	\draw[2label] (0,-.5) to (0,.5);
	\draw[2label] (0,.5) to (-1,1);
	\draw[2label] (0,.5) to (1,1);
\end{tikzpicture}
};
\endxy
+\frac{1}{[3]^2}
\xy
(0,0)*{
\begin{tikzpicture}[scale=.4]
	\draw[2label] (-1,-1) to (-.5,-.5);
	\draw[2label] (-1,1) to (-.5,.5);
	\draw[2label] (1,-1) to (.5,-.5);
	\draw[2label] (1,1) to (.5,.5);
	\draw[1label] (-.5,-.5) to (.5,-.5);
	\draw[1label] (-.5,-.5) to (-.5,.5);
	\draw[1label] (.5,-.5) to (.5,.5);
	\draw[1label] (-.5,.5) to (.5,.5);
\end{tikzpicture}
};
\endxy\\
\xy
(0,0)*{
\begin{tikzpicture}[scale=.4]
	\draw[3label] (1,-1) to (-1,1);
	\draw[overcross] (-1,-1) to (1,1);
	\draw[1label] (-1,-1) to (1,1);
\end{tikzpicture}
};
\endxy
=
\frac{q}{[2]}\,
\xy
(0,0)*{
\begin{tikzpicture}[scale=.4,rotate=90]
	\draw[3label] (-1,-1) to (0,-.5);
	\draw[1label] (1,-1) to (0,-.5);
	\draw[2label] (0,-.5) to (0,.5);
	\draw[1label] (0,.5) to (-1,1);
	\draw[3label] (0,.5) to (1,1);
\end{tikzpicture}
};
\endxy
+
\frac{q^{-1}}{[2]}\,
\xy
(0,0)*{
\begin{tikzpicture}[scale=.4]
	\draw[1label] (-1,-1) to (0,-.5);
	\draw[3label] (1,-1) to (0,-.5);
	\draw[2label] (0,-.5) to (0,.5);
	\draw[3label] (0,.5) to (-1,1);
	\draw[1label] (0,.5) to (1,1);
\end{tikzpicture}
};
\endxy
\quad , \quad
\xy
(0,0)*{
\begin{tikzpicture}[scale=.4]
	\draw[1label] (1,-1) to (-1,1);
	\draw[overcross] (-1,-1) to (1,1);
	\draw[3label] (-1,-1) to (1,1);
\end{tikzpicture}
};
\endxy
=
\frac{q}{[2]}\,
\xy
(0,0)*{
\begin{tikzpicture}[scale=.4,rotate=90,xscale=-1]
	\draw[3label] (-1,-1) to (0,-.5);
	\draw[1label] (1,-1) to (0,-.5);
	\draw[2label] (0,-.5) to (0,.5);
	\draw[1label] (0,.5) to (-1,1);
	\draw[3label] (0,.5) to (1,1);
\end{tikzpicture}
};
\endxy
+\frac{q^{-1}}{[2]}\,
\xy
(0,0)*{
\begin{tikzpicture}[scale=.4,xscale=-1]
	\draw[1label] (-1,-1) to (0,-.5);
	\draw[3label] (1,-1) to (0,-.5);
	\draw[2label] (0,-.5) to (0,.5);
	\draw[3label] (0,.5) to (-1,1);
	\draw[1label] (0,.5) to (1,1);
\end{tikzpicture}
};
\endxy\\
\xy
(0,0)*{
\begin{tikzpicture}[scale=.4]
	\draw[3label] (1,-1) to (-1,1);
	\draw[overcross] (-1,-1) to (1,1);
	\draw[2label] (-1,-1) to (1,1);
\end{tikzpicture}
};
\endxy
=
\frac{q^2}{[2]}
\xy
(0,0)*{
\begin{tikzpicture}[scale=.4,rotate=90]
	\draw[3label] (-1,-1) to (0,-.5);
	\draw[2label] (1,-1) to (0,-.5);
	\draw[1label] (0,-.5) to (0,.5);
	\draw[2label] (0,.5) to (-1,1);
	\draw[3label] (0,.5) to (1,1);
\end{tikzpicture}
};
\endxy
+\frac{q^{-2}}{[2]}
\xy
(0,0)*{
\begin{tikzpicture}[scale=.4]
	\draw[2label] (-1,-1) to (0,-.5);
	\draw[3label] (1,-1) to (0,-.5);
	\draw[1label] (0,-.5) to (0,.5);
	\draw[3label] (0,.5) to (-1,1);
	\draw[2label] (0,.5) to (1,1);
\end{tikzpicture}
};
\endxy
+\frac{1}{[2][3]}
\xy
(0,0)*{
\begin{tikzpicture}[scale=.4]
	\draw[2label] (-1,-1) to (-.5,-.5);
	\draw[3label] (-1,1) to (-.5,.5);
	\draw[3label] (1,-1) to (.5,-.5);
	\draw[2label] (1,1) to (.5,.5);
	\draw[2label] (-.5,-.5) to (.5,-.5);
	\draw[2label] (-.5,-.5) to (-.5,.5);
	\draw[1label] (.5,-.5) to (.5,.5);
	\draw[1label] (-.5,.5) to (.5,.5);
\end{tikzpicture}
};
\endxy
\quad , \quad
\xy
(0,0)*{
\begin{tikzpicture}[scale=.4]
	\draw[2label] (1,-1) to (-1,1);
	\draw[overcross] (-1,-1) to (1,1);
	\draw[3label] (-1,-1) to (1,1);
\end{tikzpicture}
};
\endxy
=
\frac{q^2}{[2]}
\xy
(0,0)*{
\begin{tikzpicture}[scale=.4,rotate=90,xscale=-1]
	\draw[3label] (-1,-1) to (0,-.5);
	\draw[2label] (1,-1) to (0,-.5);
	\draw[1label] (0,-.5) to (0,.5);
	\draw[2label] (0,.5) to (-1,1);
	\draw[3label] (0,.5) to (1,1);
\end{tikzpicture}
};
\endxy
+\frac{q^{-2}}{[2]}
\xy
(0,0)*{
\begin{tikzpicture}[scale=.4,xscale=-1]
	\draw[2label] (-1,-1) to (0,-.5);
	\draw[3label] (1,-1) to (0,-.5);
	\draw[1label] (0,-.5) to (0,.5);
	\draw[3label] (0,.5) to (-1,1);
	\draw[2label] (0,.5) to (1,1);
\end{tikzpicture}
};
\endxy
+\frac{1}{[2][3]}
\xy
(0,0)*{
\begin{tikzpicture}[scale=.4,xscale=-1]
	\draw[2label] (-1,-1) to (-.5,-.5);
	\draw[3label] (-1,1) to (-.5,.5);
	\draw[3label] (1,-1) to (.5,-.5);
	\draw[2label] (1,1) to (.5,.5);
	\draw[2label] (-.5,-.5) to (.5,-.5);
	\draw[2label] (-.5,-.5) to (-.5,.5);
	\draw[1label] (.5,-.5) to (.5,.5);
	\draw[1label] (-.5,.5) to (.5,.5);
\end{tikzpicture}
};
\endxy\\
\xy
(0,0)*{
\begin{tikzpicture}[scale=.4]
	\draw[3label] (1,-1) to (-1,1);
	\draw[overcross] (-1,-1) to (1,1);
	\draw[3label] (-1,-1) to (1,1);
\end{tikzpicture}
};
\endxy
=q^3
\xy
(0,0)*{
\begin{tikzpicture}[scale=.4]
	\draw[3label] (-1,-1) to [out=45,in=-90] (-.5,0);
	\draw[3label] (-.5,0) to [out=90,in=-45] (-1,1);
	\draw[3label] (1,-1) to [out=135,in=-90] (.5,0);
	\draw[3label] (.5,0) to [out=90,in=-135] (1,1);
\end{tikzpicture}
};
\endxy
+q^{-3}
\xy
(0,0)*{
\begin{tikzpicture}[scale=.4,rotate=90]
	\draw[3label] (-1,-1) to [out=45,in=-90] (-.5,0);
	\draw[3label] (-.5,0) to [out=90,in=-45] (-1,1);
	\draw[3label] (1,-1) to [out=135,in=-90] (.5,0);
	\draw[3label] (.5,0) to [out=90,in=-135] (1,1);
\end{tikzpicture}
};
\endxy
+\frac{q}{[2]^2}
\xy
(0,0)*{
\begin{tikzpicture}[scale=.4]
	\draw[3label] (-1,-1) to (-.5,-.5);
	\draw[3label] (-1,1) to (-.5,.5);
	\draw[3label] (1,-1) to (.5,-.5);
	\draw[3label] (1,1) to (.5,.5);
	\draw[1label] (-.5,-.5) to (.5,-.5);
	\draw[2label] (-.5,-.5) to (-.5,.5);
	\draw[2label] (.5,-.5) to (.5,.5);
	\draw[1label] (-.5,.5) to (.5,.5);
\end{tikzpicture}
};
\endxy
+\frac{q^{-1}}{[2]^2}
\xy
(0,0)*{
\begin{tikzpicture}[scale=.4]
	\draw[3label] (-1,-1) to (-.5,-.5);
	\draw[3label] (-1,1) to (-.5,.5);
	\draw[3label] (1,-1) to (.5,-.5);
	\draw[3label] (1,1) to (.5,.5);
	\draw[2label] (-.5,-.5) to (.5,-.5);
	\draw[1label] (-.5,-.5) to (-.5,.5);
	\draw[1label] (.5,-.5) to (.5,.5);
	\draw[2label] (-.5,.5) to (.5,.5);
\end{tikzpicture}
};
\endxy
\end{gathered}
\end{equation}
In these formulae, we use a new trivalent vertex defined as follows:
\[
\xy
(0,0)*{
\begin{tikzpicture}[scale =.5, smallnodes]
	\draw[2label] (0,0)to [out=90,in=210] (.5,.75);
	\draw[2label] (1,0) to [out=90,in=330] (.5,.75);
	\draw[2label] (.5,.75) to (.5,1.5);
\end{tikzpicture}
};
\endxy 
= \frac{1}{[3]}\,
\xy
(0,0)*{
\begin{tikzpicture}[scale=.25]
	\draw[1label] (-1,0) to (1,0);
	\draw[1label] (-1,0) to (0,1.732);
	\draw[1label] (1,0) to (0,1.732);
	\draw[2label] (0,1.732) to (0,3.232);
	\draw[2label] (-2.3,-.75) to (-1,0);
	\draw[2label] (2.3,-.75) to (1,0);
\end{tikzpicture}
};
\endxy
\]

Now, suppose that $\mathcal{L} \subset S^3$ is a framed link 
with components colored by elements in $\{1,2,3\}$, 
let $\mathcal{D}_{\mathcal{L}}$ be any diagram for $\mathcal{L}$, 
and let $P_{\spn[6]}(\mathcal{L})$ be the element of $\C(q)$ obtained by applying 
the formulae in \eqref{eq:allcross} to $\mathcal{D}_{\mathcal{L}}$, 
and evaluating the closed webs using Theorem \ref{thm:Empty}.

\begin{prop}
$P_{\spn[6]}(\mathcal{L})$ is an invariant of framed colored links 
(i.e. is independent of the choice of diagram), 
and is equal to the $U_q(\spn[6])$ Reshetikhin-Turaev invariant.
\end{prop}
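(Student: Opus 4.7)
The plan is to leverage the ribbon structure on $\Web(\spn[6])$ that was established earlier in the paper, together with the fact that the functor $\Psi: \Web(\spn[6]) \to \FRep(U_q(\spn[6]))$ is a ribbon functor. The key point is that the Reshetikhin-Turaev machinery produces an invariant of framed colored links from any ribbon category by assigning a morphism in $\End(\mathbf{1})$ to a link diagram using the braiding and duality structure, and this assignment is diagram-independent precisely because it respects the ribbon axioms.

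First, I would observe that the crossing formulae in \eqref{eq:allcross} are, by construction, the images in $\Web(\spn[6])$ of the braiding morphisms $\beta_{k,\ell}$ defined in \eqref{eq:braiding} and \eqref{eq:coloredbraiding}; these were shown to endow $\Web(\spn[6])$ with a ribbon structure. Thus the assignment $\mathcal{D}_{\mathcal{L}} \mapsto [\mathcal{D}_{\mathcal{L}}] \in \End_{\Web(\spn[6])}(\varnothing)$ obtained by resolving each crossing via \eqref{eq:allcross} and treating caps and cups as the pivotal (co)evaluation morphisms is, by the standard Reshetikhin-Turaev argument (see \cite{RT1}, or e.g. the account for any ribbon category in \cite{Sel1}), invariant under framed Reidemeister moves. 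By Theorem \ref{thmEmpty}, $\End_{\Web(\spn[6])}(\varnothing) \cong \C(q)$, so this invariant is indeed a well-defined element of $\C(q)$, which is $P_{\spn[6]}(\mathcal{L})$.

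For the second claim, I would use that the functor $\Psi$ is a ribbon functor (this was established in the course of proving Theorem \ref{thm:Psi}, since the coefficients in \eqref{eq:braiding} were determined precisely so that $\Psi$ intertwines the chosen braidings, and \eqref{eq:ribbon} shows compatibility with the ribbon twist). Since the Reshetikhin-Turaev invariant $P_{\spn[6],\mathrm{RT}}(\mathcal{L})$ is obtained by the same diagrammatic procedure applied in $\FRep(U_q(\spn[6]))$, functoriality of $\Psi$ gives $\Psi(P_{\spn[6]}(\mathcal{L})) = P_{\spn[6],\mathrm{RT}}(\mathcal{L})$. Since $\Psi$ restricted to $\End_{\Web(\spn[6])}(\varnothing) \to \End_{\spn[6]}(\C(q)) \cong \C(q)$ is the identity on $\C(q)$ (both spaces being canonically $\C(q)$, and $\Psi$ being $\C(q)$-linear and sending $\id_\varnothing$ to $\id_{\C(q)}$), the two invariants coincide.

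The only substantive point that requires care is verifying that $\Psi$ is genuinely ribbon with respect to our choice of non-standard pivotal structure from Remark \ref{rem:Ribbon}; but this is essentially built into the proof of Theorem \ref{thm:Psi}, where the coefficients $\kappa, \lambda, \mu$ and the twist value $-q^{-7}$ were matched to those arising from the ribbon element $X^{-2}$ on $U_q(\spn[6])$. Hence no genuinely new computation is needed beyond citing the ribbon nature of $\Psi$ and invoking the general Reshetikhin-Turaev formalism; the proof reduces to assembling the pieces established in Sections \ref{sec:FullAndBMW} and \ref{sec:Closed}.
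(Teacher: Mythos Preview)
Your proof is correct and essentially follows the same strategy as the paper, with a minor difference in emphasis. You establish diagram-independence \emph{intrinsically} in $\Web(\spn[6])$ by invoking its ribbon structure and the general Reshetikhin--Turaev formalism, and then use that $\Psi$ is ribbon to identify the result with the $U_q(\spn[6])$ invariant. The paper instead establishes diagram-independence \emph{extrinsically}: it cites the standard Reshetikhin--Turaev result in $\Rep(U_q(\spn[6]))$ and then pulls back via the isomorphism $\Psi\colon \End_{\Web(\spn[6])}(\varnothing)\xrightarrow{\cong}\End_{\spn[6]}(\C(q))$, which simultaneously gives both invariance and equality. Your route is slightly more self-contained (it would work even without knowing $\Psi$ is injective on $\End(\varnothing)$), while the paper's is marginally shorter since it avoids re-invoking the ribbon Reidemeister argument in $\Web(\spn[6])$. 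Either way the substance is the same.
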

\begin{proof}
Via the Reshetikhin-Turaev construction \cite{RT1},
the link diagram $\mathcal{D}_{\mathcal{L}}$ determines an 
endomorphism of the trivial representation $\C(q)$ in $\Rep(U_q(\spn[6]))$, 
and this scalar is independent of the choice of diagram. 
The result then follows since $P_{\spn[6]}(\mathcal{L})$ is equal to 
the endomorphism of the monoidal unit $\varnothing$ in $\Web(\spn[6])$ 
determined by $\mathcal{D}_{\mathcal{L}}$, 
which is taken to the Reshetikhin-Turaev invariant via the isomorphism
\[
\End_{\Web(\spn[6])}(\varnothing) \xrightarrow{\Psi}
\End_{\spn[6]}(\C(q))
\]
\end{proof}

\begin{rem}
Suppose that $\mathcal{L}$ is $1$-colored.
The invariant $P_{\spn[6]}(\mathcal{L})$
can then be computed from a framed link diagram 
using the skein relation
\[
\xy
(0,0)*{
\begin{tikzpicture}[scale=.4]
	\draw[1label] (1,-1) to (-1,1);
	\draw[overcross] (-1,-1) to (1,1);
	\draw[1label] (-1,-1) to (1,1);
\end{tikzpicture}
};
\endxy -
\xy
(0,0)*{
\begin{tikzpicture}[scale=.4,rotate=90]
	\draw[1label] (1,-1) to (-1,1);
	\draw[overcross] (-1,-1) to (1,1);
	\draw[1label] (-1,-1) to (1,1);
\end{tikzpicture}
};
\endxy
=
(q-q^{-1}) \;
\Bigg( \;
\xy
(0,0)*{
\begin{tikzpicture}[scale=.4]
	\draw[1label] (-1,-1) to [out=45,in=-90] (-.5,0);
	\draw[1label] (-.5,0) to [out=90,in=-45] (-1,1);
	\draw[1label] (1,-1) to [out=135,in=-90] (.5,0);
	\draw[1label] (.5,0) to [out=90,in=-135] (1,1);
\end{tikzpicture}
};
\endxy
-
\xy
(0,0)*{
\begin{tikzpicture}[scale=.4,rotate=90]
	\draw[1label] (-1,-1) to [out=45,in=-90] (-.5,0);
	\draw[1label] (-.5,0) to [out=90,in=-45] (-1,1);
	\draw[1label] (1,-1) to [out=135,in=-90] (.5,0);
	\draw[1label] (.5,0) to [out=90,in=-135] (1,1);
\end{tikzpicture}
};
\endxy \;
\Bigg)
\]
together with the relations
\[
\xy
(0,0)*{
\begin{tikzpicture}[scale=.5,yscale=-1]
	\draw[1label] (.8,-.4) to [out=180,in=270] (0,1);
	\draw[overcross] (0,-1) to [out=90,in=180] (.8,.4);
	\draw[1label] (0,-1) to [out=90,in=180] (.8,.4);
	\draw[1label] (.8,.4) to [out=0,in=90] (1.1,0) to [out=270,in=0] (.8,-.4);
\end{tikzpicture}
};
\endxy = 
-q^{-7} \;
\xy
(0,0)*{
\begin{tikzpicture}[scale=.5,yscale=-1]
	\draw[1label] (0,-1) to (0,1);
\end{tikzpicture}
};
\endxy
\quad , \quad
\xy
(0,0)*{
\begin{tikzpicture}[scale=.5]
	\draw[1label] (.8,-.4) to [out=180,in=270] (0,1);
	\draw[overcross] (0,-1) to [out=90,in=180] (.8,.4);
	\draw[1label] (0,-1) to [out=90,in=180] (.8,.4);
	\draw[1label] (.8,.4) to [out=0,in=90] (1.1,0) to [out=270,in=0] (.8,-.4);
\end{tikzpicture}
};
\endxy =
-q^{7} \;
\xy
(0,0)*{
\begin{tikzpicture}[scale=.5,yscale=-1]
	\draw[1label] (0,-1) to (0,1);
\end{tikzpicture}
};
\endxy
\quad , \quad
\xy
(0,0)*{
\begin{tikzpicture}[scale =.75, smallnodes]
	\draw[1label] (0,0) circle (.5);
\end{tikzpicture}
};
\endxy
= -\frac{[3][8]}{[4]} = 1-[7]
\]
This implies that $P_{\spn[6]}(\mathcal{L}) \in \Z[q,q^{-1}]$. 
\end{rem}

We also obtain a refined invariant for framed links in the solid torus/thickened annulus. 
Indeed, the formulae in \eqref{eq:allcross} assign an 
element $P_{\spn[6]}^{\mathcal{A}}(\mathcal{L})$
of the $\Web(\spn[6])$ skein algebra of the annulus to the diagram of any such link. 
Theorem \ref{thm:traceFRep} shows that this skein algebra 
is isomorphic to $\C(q)[\chi_1,\chi_2,\chi_3]$. 
Since $\Web(\spn[6])$ is ribbon, we immediately have the following.

\begin{prop}
$P_{\spn[6]}^{\mathcal{A}}(\mathcal{L}) \in \C(q)[\chi_1,\chi_2,\chi_3]$ is an invariant 
of framed annular links.
\end{prop}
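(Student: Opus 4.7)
The plan is to deduce this proposition directly from the ribbon structure on $\Web(\spn[6])$ established earlier in this section, together with Theorem \ref{thm:traceFRep}. Concretely, I would cut the annulus along a generic radial line missing all crossings, tangencies, and trivalent vertices of $\mathcal{D}_\mathcal{L}$, and read the resulting labelled tangle as an endomorphism $T_\mathcal{D} \in \End_{\Web(\spn[6])}(\vec{k})$, where $\vec{k}$ records the colors of the link components as they meet the cut. Applying \eqref{eq:allcross} to each crossing expands $T_\mathcal{D}$ into a $\C(q)$-linear combination of webs, and one then sets
\[
P^{\mathcal{A}}_{\spn[6]}(\mathcal{L}) := \tr(T_\mathcal{D}) \in \Tr(\Web(\spn[6])) \cong \C(q)[\chi_1,\chi_2,\chi_3].
\]
The content of the proposition thus reduces to verifying that $\tr(T_\mathcal{D})$ is unchanged under passage from one diagram of $\mathcal{L}$ to any other and under the choice of radial cut.

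For invariance I would check the standard generators of framed annular isotopy in turn. Planar isotopies within the rectangle hold tautologically by the pivotal monoidal structure of $\Web(\spn[6])$. Reidemeister II and III moves, and their colored counterparts, hold in $\End_{\Web(\spn[6])}(\vec{k})$ by the braid relations \eqref{eq:braidrels} together with naturality of the braiding \eqref{eq:vertexslide}, both established when verifying that $\Web(\spn[6])$ is ribbon. Framed Reidemeister I moves are handled by \eqref{eq:ribbon} and its analogues for $2$- and $3$-labeled strands, which follow from the action of the non-standard ribbon element (Remark \ref{rem:Ribbon}) on $W$ and $U$ combined with the fact that $\Psi$ is a ribbon functor; the relevant twist coefficients can also be read off directly from \eqref{eq:coloredbraiding}. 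The remaining moves — those that slide an arc across the chosen radial cut, including the nontrivial generator of annular (versus disk) isotopy — exchange a tangle of the form $g \circ f$ with one of the form $f \circ g$ in $\End_{\Web(\spn[6])}(\vec{k})$, whose classes coincide in $\Tr(\Web(\spn[6]))$ by the defining relations of the categorical trace. The same observation handles independence of the choice of radial cut.

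The main obstacle is essentially illusory: the substantive work has already been carried out in establishing the ribbon structure on $\Web(\spn[6])$ (providing the braid, colored braid, and ribbon identities needed above) and in Theorem \ref{thm:traceFRep} (supplying the target algebra $\C(q)[\chi_1,\chi_2,\chi_3]$ together with its identification with the annular skein module). What remains is the standard packaging of annular skein theory for ribbon categories. The only new computation is the verification of the colored Reidemeister I identities, which is a routine but tedious check starting from \eqref{eq:coloredbraiding} and using the quantum dimensions \eqref{eq:qDim}; I expect no additional subtlety beyond this.
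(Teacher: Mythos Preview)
Your proposal is correct and matches the paper's approach: the paper dispatches this in a single sentence, noting that the result is immediate because $\Web(\spn[6])$ is ribbon. Your write-up simply unpacks what that sentence entails (Reidemeister moves via the braid and ribbon relations, and annular isotopy via the trace relation $fg \sim gf$), so there is no substantive difference.
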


\appendix

\begingroup
\allowdisplaybreaks[4]
\renewcommand{\arraystretch}{.25}

\section{Relations in $\Lad(\gspn[6])$}\label{App:Rels}

The relations in $\Lad(\gspn[6])$ are as follows, 
together with those obtained from these via horizontal and vertical reflection. 

\begin{itemize}[leftmargin=*]
\item \textbf{Rung explosion:}
If $r \geq 1$, we have
\[
\xy
(0,0)*{

\]
\end{itemize}

\bibliographystyle{plain}

%
\end{document}